\begin{document}
\title{\onehalfspacing{}Salter's question on the image of the Burau representation of $B_{4}$}
\author{Donsung Lee}
\date{December 19, 2024}

\maketitle
\medskip{}

\begin{abstract}
\begin{spacing}{0.9}
\noindent In 1974, Birman posed the problem of identifying the conditions
under which a matrix with Laurent polynomial entries lies in the image
of the Burau representation. Building on this, Salter, in 2021, refined
the inquiry to ask whether the central quotient of the Burau image
group coincides with the central quotient of a specific subgroup of
the unitary group. Assuming the faithfulness of the $B_{4}$ Burau,
we solve Salter's question negatively in the case $n=4$ constructing
counterexamples. Additionally, we offer two remarks on the faithfulness
of the $B_{4}$ Burau. First, we establish that the restriction to
the centralizer of a standard generator in $B_{4}$ is faithful modulo
$p$ for every prime $p$, extending both Smythe's result in 1979
and Moran's result in 1991. Second, we present a building-theoretic
criterion for the faithfulness.\vspace{1cm}

\noindent \textbf{Keywords:} braid group, Burau representation, Bruhat-Tits
building, unitary group over Laurent polynomial rings, Euclidean field,
right-angled Artin group\medskip{}

\noindent \textbf{Mathematics Subject Classification 2020:} 20F36,
20E05, 20E42, 12L12, 11E04 
\end{spacing}
\end{abstract}
\noindent $ $\theoremstyle{definition}
\theoremstyle{remark}
\newtheorem{theorem}{Theorem}[section]
\newtheorem{lemma}[theorem]{Lemma}
\newtheorem{corollary}[theorem]{Corollary}
\newtheorem{question}[theorem]{Question}
\newtheorem{remark}[theorem]{Remark}
\newtheorem{example}[theorem]{Example}
\newtheorem{definition}[theorem]{Definition}
\newtheorem{algorithm}[theorem]{Algorithm}
\newtheorem{hypothesis}[theorem]{Hypothesis}

\newtheorem*{ack}{Acknowledgements}
\newtheorem*{question1}{Question \textnormal{1}}
\newtheorem*{question2}{Question \textnormal{2}}
\newtheorem*{claim1}{Claim \textnormal{1}}
\newtheorem*{proofclaim1}{Proof of Claim \textnormal{1}}
\newtheorem*{claim2}{Claim \textnormal{2}}
\newtheorem*{proofclaim2}{Proof of Claim \textnormal{2}}
\newtheorem*{prooftheorem11}{Proof of Theorem \textnormal{1.1}}
\newtheorem*{prooftheorem12}{Proof of Theorem \textnormal{1.2}}
\newtheorem*{prooftheorem13}{Proof of Theorem \textnormal{1.3}}
\newtheorem*{prooftheorem14}{Proof of Theorem \textnormal{1.4}}
\newtheorem*{prooftheorem15}{Proof of Theorem \textnormal{1.5}}

\section{Introduction}

The \emph{Burau representation} is a fundamental linear representation
for \emph{braid groups} $B_{n}$, which has the generators $\sigma_{1},\,\sigma_{2},\cdots,\,\sigma_{n-1}$
\citep{MR2435235}. The representation has two formulations, the \emph{unreduced}
and \emph{reduced} Burau representations. For each integer $i$ such
that $1\le i\le n-1$, the unreduced representation $\beta_{n,\,u}$
is given explicitly by
\begin{align*}
\sigma_{i} & \mapsto\left(\begin{array}{cccc}
I_{i-1} & 0 & 0 & 0\\
0 & 1-t & t & 0\\
0 & 1 & 0 & 0\\
0 & 0 & 0 & I_{n-i-1}
\end{array}\right)\in\mathrm{GL}\left(n,\,\mathbb{Z}\left[t,\,t^{-1}\right]\right),
\end{align*}
while the reduced representation $\beta_{n,\,r}$ is given to $\mathrm{GL}\left(n-1,\,\mathbb{Z}\left[t,\,t^{-1}\right]\right)$
by
\begin{align*}
 & \sigma_{1}\mapsto\left(\begin{array}{ccc}
-t & 1\\
0 & 1\\
 &  & I_{n-3}
\end{array}\right),\:\sigma_{n-1}\mapsto\left(\begin{array}{ccc}
I_{n-3}\\
 & 1 & 0\\
 & t & -t
\end{array}\right),\\
 & \sigma_{i}\mapsto\left(\begin{array}{ccccc}
I_{i-2}\\
 & 1 & 0 & 0\\
 & t & -t & 1\\
 & 0 & 0 & 1\\
 &  &  &  & I_{n-i-2}
\end{array}\right),\;2\le i\le n-2.
\end{align*}

In 1974, Birman (\citep{MR0375281}, Research Problem 14) asked how
the image of $\beta_{n,\,u}$ is characterized in $\mathrm{GL}\left(n,\,\mathbb{Z}\left[t,t^{-1}\right]\right)$
as a group of matrices. In 1984, Squier in \citep{MR0727232} observed
that the elements in the image satisfy the \emph{unitarity}. If we
put
\begin{align*}
J_{n} & :=\left(\begin{array}{ccccc}
1 & -t^{-1} & -t^{-1} & \cdots & -t^{-1}\\
-t & 1 & -t^{-1} & \cdots & -t^{-1}\\
-t & -t & 1 & \cdots & -t^{-1}\\
\vdots & \vdots & \vdots & \ddots & \vdots\\
-t & -t & -t & \cdots & 1
\end{array}\right),
\end{align*}
then any element $A\in\beta_{n,\,u}\left(B_{n,\,u}\right)\subset\mathrm{GL}\left(n,\,\mathbb{Z}\left[t,\,t^{-1}\right]\right)$
satisfies the \emph{unitarity relation}
\begin{align}
\overline{A}J_{n}A^{T} & =J_{n},
\end{align}
where for a matrix $A$ with Laurent polynomial entries, $A\mapsto\overline{A}$
maps $t\mapsto t^{-1}$ for every entry. Define further for any Laurent
polynomial $f\left(t\right)\in\mathbb{Z}\left[t,\,t^{-1}\right]$,
$\overline{f\left(t\right)}:=f\left(t^{-1}\right)$. Since $\beta_{n,\,r}$
is the composition factor of $\beta_{n,\,u}$, we also have a unitarity
relation on $\beta_{n,\,r}$.

In 2021, Salter in \citep{MR4228497} pointed out several algebraic
properties satisfied by the elements in the Burau images and defined
the groups $\Gamma_{n}$ (resp. $\Gamma_{n}'$) as subgroups of the
unitary group associated with $\beta_{n,\,u}$ (resp. $\beta_{n,\,r}$)
based on these properties. For each $n$, denote by $\overline{\Gamma_{n}}$
(resp. $\overline{\Gamma_{n}'}$) the quotient of $\Gamma_{n}$ (resp.
$\Gamma_{n}'$) by its center via the quotient map $q_{n}:\Gamma_{n}\to\overline{\Gamma_{n}}$
(resp. $q_{n}':\Gamma_{n}'\to\overline{\Gamma_{n}'}$). Salter then
posed the question of whether the composition $q_{n}\circ\beta_{n,\,u}:B_{n}\to\overline{\Gamma_{n}}$
(resp. $q_{n}'\circ\beta_{n,\,r}:B_{n}\to\overline{\Gamma_{n}'}$)
is surjective for $n\ge2$. Denote $q_{n}\circ\beta_{n,\,u}$ (resp.
$q_{n}'\circ\beta_{n,\,r}$) by $\overline{\beta_{n,\,u}}$ (resp.
$\overline{\beta_{n,\,r}}$). Since the case $n=2$ is straightforward
to verify, we focus on the cases $n\ge3$.

Recently, the author in \citep{lee2024salters} demonstrated that
$\overline{\beta_{3,\,u}}$ is not surjective by constructing counterexamples
by using a tree-based algorithm. For the case $n=3$, these counterexamples
also establish that $\overline{\beta_{3,\,r}}$ is not surjective.

In this paper, assuming that $\beta_{4,\,u}$ (resp. $\beta_{4,\,r}$)
is faithful, we prove that $\overline{\beta_{4,\,u}}$ (resp. $\overline{\beta_{4,\,r}}$)
is not surjective. This is achieved by constructing two types of counterexamples:
the first (${\mathrm{Theorem\,\,1.1}}$) is hereditary from those
found in \citep{lee2024salters}, while the second (Theorem 1.2) is
newly discovered.

\noindent \begin{theorem}

Suppose that $\beta_{4,\,u}$ (resp. $\beta_{4,\,r}$) is faithful.
Then, there is an element $A\in\overline{\Gamma_{4}}$ (resp. $A\in\overline{\Gamma_{4}'}$)
that commutes with $\overline{\beta_{4,\,u}}\left(\left(\sigma_{3}\sigma_{2}\sigma_{3}\right)^{2}\right)$
(resp. $\overline{\beta_{4,\,r}}\left(\left(\sigma_{3}\sigma_{2}\sigma_{3}\right)^{2}\right)$)
and lies outside the image $\overline{\beta_{4,\,u}}\left(B_{4}\right)$
(resp. $\overline{\beta_{4,\,r}}\left(B_{4}\right)$).

\noindent \end{theorem}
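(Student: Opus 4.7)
I exploit the isomorphism $\iota:\langle\sigma_2,\sigma_3\rangle\xrightarrow{\sim}B_3$ given by $\sigma_{i+1}\mapsto\sigma_i$, together with the visible block structure of the Burau restrictions. Direct inspection of the matrices in the introduction shows that for $j\in\{2,3\}$, both $\beta_{4,u}(\sigma_j)$ and $\beta_{4,r}(\sigma_j)$ have the form $\mathrm{diag}(1,N_j)$, and that the lower blocks recover $\beta_{3,u}$ and $\beta_{3,r}$ under $\iota$; hence $\beta_{4,u}|_{\langle\sigma_2,\sigma_3\rangle}=1\oplus(\beta_{3,u}\circ\iota)$, and likewise for the reduced version. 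Since $\iota((\sigma_3\sigma_2\sigma_3)^2)=\Delta_3^2\in Z(B_3)$, I obtain $\beta_{4,u}((\sigma_3\sigma_2\sigma_3)^2)=\mathrm{diag}(1,\beta_{3,u}(\Delta_3^2))$ and $\beta_{4,r}((\sigma_3\sigma_2\sigma_3)^2)=\mathrm{diag}(1,t^3 I_2)$, the latter being scalar by a short calculation.

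Starting from an appropriate counterexample $\tilde A\in\Gamma_3$ (resp.\ $\Gamma_3'$) produced in \citep{lee2024salters}, chosen so as to commute with $\beta_{3,u}(\Delta_3^2)$ in the unreduced case (automatic in the reduced case), I set $A:=\mathrm{diag}(1,\tilde A)$. Block-expanding the unitarity relation $\overline A J_4 A^T=J_4$: the lower $3\times 3$ block condition is exactly the $J_3$-unitarity of $\tilde A$, while the first row and column conditions reduce to the compatibility $\tilde A e=e$ and $\overline{\tilde A}e=e$ with $e=(1,1,1)^T$, both of which hold because $\tilde A\in\Gamma_3$. Salter's remaining defining properties (on determinant, evaluation at $t=1$, etc.)\ propagate along the same block-diagonal lines. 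The commutation $[A,\beta_{4,u}((\sigma_3\sigma_2\sigma_3)^2)]=1$ then follows from the block form, so $A$ commutes with the image of $(\sigma_3\sigma_2\sigma_3)^2$ in $\overline{\Gamma_4}$.

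To prove $\overline A\notin\overline{\beta_{4,u}}(B_4)$, I argue by contradiction: suppose $A=c\cdot\beta_{4,u}(g)$ for some $c\in Z(\Gamma_4)$ and $g\in B_4$. Centrality of $c$ transfers the commutation with $\beta_{4,u}((\sigma_3\sigma_2\sigma_3)^2)$ to $\beta_{4,u}(g)$; faithfulness of $\beta_{4,u}$ then forces $g$ to centralize $(\sigma_3\sigma_2\sigma_3)^2$ in $B_4$ modulo $Z(B_4)$. The main technical obstacle is the classification of this centralizer. Since $(\sigma_3\sigma_2\sigma_3)^2=T_\gamma$ is the Dehn twist around the reducing curve enclosing punctures $\{2,3,4\}$, the Birman--Lubotzky--McCarthy description of centralizers along canonical reduction systems yields a generating set consisting of $\langle\sigma_2,\sigma_3\rangle$ (acting inside $\gamma$), an annular generator $\tau$ looping puncture $1$ once around $\gamma$, and the central twist $\Delta_4^2\in Z(B_4)$.

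The final step is to rule out nontrivial contributions from $\tau$. A direct Burau computation should show that $\beta_{4,u}(\tau)$ has nonzero entries in the first column below the diagonal (reflecting that $\tau$ transports the homological weight of strand $1$ across $\gamma$), so no scalar $c\in Z(\Gamma_4)$ can adjust $\beta_{4,u}(\tau)$ into the block form $\mathrm{diag}(1,\ast)$ required by $A$. This forces $g\in\langle\sigma_2,\sigma_3\rangle\cdot Z(B_4)$, and the block decomposition then gives $\tilde A\equiv\beta_{3,u}(\iota(g))$ modulo $Z(\Gamma_3)$, contradicting the choice of $\tilde A$ as an $n=3$ counterexample. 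The reduced case runs identically via the $1\oplus\beta_{3,r}$ decomposition. The hardest step will be this annular-generator analysis: pinning down an explicit braid word for $\tau$ and verifying the location of nonzero entries in its Burau image is where the argument stands or falls.
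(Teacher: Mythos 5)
Your construction of the counterexample itself -- block-diagonal embedding $A=\mathrm{diag}(1,\tilde A)$ of an $n=3$ counterexample, with unitarity and Salter's conditions checked blockwise -- is exactly the paper's Lemma 2.4. Where you genuinely diverge is in showing $A\notin\overline{\beta_{4,u}}(B_4)$: the paper reduces this to the separation hypothesis $(\mathrm{SH}_4)$ (``block form implies the braid lies in $\langle\sigma_2,\sigma_3\rangle$'') and proves that faithfulness implies $(\mathrm{SH}_4)$ by passing to the kernel of the evaluation at $t=-1$ and invoking Smythe's theorem that the braid Torelli group of $B_4$ is free of infinite rank, deriving a contradiction between freeness and commutation with a central element; you instead exploit the commutation with $(\sigma_3\sigma_2\sigma_3)^2$ and the geometric description of the centralizer of that Dehn twist. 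Your route is viable in principle, but as written it has two genuine gaps.

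First, the claim that $\beta_{4,r}(\sigma_2)$ has the form $\mathrm{diag}(1,N_2)$ is false: from the formulas in the introduction its first column is $(1,t,0)^{T}$, so the restriction of $\beta_{4,r}$ to $\langle\sigma_2,\sigma_3\rangle$ is only block \emph{triangular} (trivial first row, first column determined by unitarity), not $1\oplus(\beta_{3,r}\circ\iota)$. Hence ``the reduced case runs identically'' does not; the embedding $\Gamma_3'\hookrightarrow\Gamma_4'$ and the hypothesis $(\mathrm{SH}_4')$ must be phrased in terms of the trivial first row alone, as the paper does. Second, your elimination of the annular generator $\tau$ does not close the argument: showing that $\beta_{4,u}(\tau)$ itself fails to be block diagonal says nothing about products $\tau^{k}h$ with $k\neq 0$ and $h\in\langle\sigma_2,\sigma_3\rangle$, which is what you must exclude. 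The correct observation is that no such generator is needed at all: pushing puncture $1$ around a loop parallel to $\gamma$ is, in the complementary pair of pants, the product of boundary twists $T_{\partial D}T_{\gamma}^{-1}=\Delta_4^{2}(\sigma_3\sigma_2\sigma_3)^{-2}$, so $\tau$ already lies in $\langle\sigma_2,\sigma_3\rangle\cdot Z(B_4)$ and $C_{B_4}\bigl((\sigma_3\sigma_2\sigma_3)^2\bigr)=\langle\sigma_2,\sigma_3\rangle\times\langle\Delta_4^{2}\rangle$. With that identity in hand (and noting that $\beta_{4,u}(\Delta_4^{2})$ is central in $\Gamma_4$, so it is absorbed into $c$), your contradiction with the $n=3$ counterexample goes through; without it, the step you yourself flag as decisive is not established.
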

\begin{theorem}

Suppose that $\beta_{4,\,r}$ is faithful. Then, there is an element
$A\in\overline{\Gamma_{4}'}$ that commutes with $\overline{\beta_{4,\,r}}\left(\sigma_{3}\right)$
and lies outside the image of $\overline{\beta_{4,\,r}}\left(B_{4}\right)$.

\noindent \end{theorem}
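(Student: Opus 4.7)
The plan is to produce $A$ explicitly by matrix computation and then certify, under the faithfulness of $\beta_{4,\,r}$, that $A$ escapes the braid image. Faithfulness enters only as a bridge: together with the standard fact that $\beta_{4,\,r}^{-1}\bigl(Z(\Gamma_{4}')\bigr)=Z(B_{4})$, it forces any element of $\overline{\beta_{4,\,r}}\left(B_{4}\right)$ commuting with $\overline{M}:=\overline{\beta_{4,\,r}}(\sigma_{3})$ to actually lie in $\overline{\beta_{4,\,r}}\bigl(C_{B_{4}}(\sigma_{3})\bigr)$, so that the two centralizers can be compared on equal footing.

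First, I would describe the centralizer of $\overline{M}$ inside $\overline{\Gamma_{4}'}$. The matrix $\beta_{4,\,r}(\sigma_{3})$ is the $3\times 3$ matrix
\[
\begin{pmatrix} 1 & 0 & 0 \\ 0 & 1 & 0 \\ 0 & t & -t \end{pmatrix},
\]
which has distinct eigenvalues $1,1,-t$ over $\mathbb{Q}(t)$. Any matrix commuting with it preserves the two-dimensional $1$-eigenspace and the one-dimensional $(-t)$-eigenspace, and imposing the unitarity relation defining $\overline{\Gamma_{4}'}$ cuts out a subgroup $Z\subset\overline{\Gamma_{4}'}$ equipped with a natural projection onto a $2$-dimensional unitary group over $\mathbb{Z}[t,\,t^{-1}]$. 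This $2$-dimensional quotient is the natural arena in which to build the counterexample, plausibly through its action on a Bruhat-Tits tree, in keeping with the building-theoretic viewpoint announced in the introduction.

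Second, I would pin down a generating set for $C_{B_{4}}(\sigma_{3})$. Classical results of Makanin and Gonz\'alez-Meneses present this centralizer as finitely generated, essentially by $\sigma_{1}$, $\sigma_{3}$, and a ``ribbon'' element reflecting the fusion of strands $3$ and $4$. Under faithfulness, the images of these three generators produce a subgroup of $Z$ whose abstract structure is completely determined.

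The decisive step --- and the main obstacle --- is the third: exhibit $A\in Z$ lying outside this subgroup. I would project $Z$ onto its $2$-dimensional unitary quotient and analyze the induced action on the associated Bruhat-Tits tree; the three braid-generator images determine a controlled translation-length spectrum and fundamental domain, and the task is to pick $A$ whose projection has geometry incompatible with these constraints (say, a translation length not realized by any word in the generators, or fixing a vertex outside their orbit). Producing such an $A$ concretely will likely require either a careful ansatz guided by the tree geometry or an adaptation of the algorithmic methods of \citep{lee2024salters} from the $B_{3}$ setting to this $2$-dimensional unitary setting. Once $A$ is in hand, the verifications that $A\in\overline{\Gamma_{4}'}$ and that $A$ commutes with $\overline{M}$ are routine.
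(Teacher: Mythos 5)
Your overall strategy is the one the paper actually follows: use faithfulness to reduce the question to comparing the centralizer of $\beta_{4,\,r}\left(\sigma_{3}\right)$ in $\Gamma_{4}'$ with the image of $C_{B_{4}}\left(\sigma_{3}\right)=\left\langle \sigma_{1},\,\sigma_{3},\,\left(\sigma_{3}\sigma_{2}\sigma_{3}\right)^{2}\right\rangle $, pass to the $2\times2$ block on the $1$-eigenspace where the unitarity becomes a rank-two similitude condition, and then exhibit an element whose normal form in that $2\times2$ group is incompatible with the subgroup generated by the images of the centralizer generators. However, you have left the decisive step unexecuted, and you have misjudged where the real difficulty sits. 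The certification mechanism in the paper is not a translation-length or fundamental-domain argument but the free-product decomposition of the quaternionic group $\mathrm{Q}\left(\mathbb{Q}\right)$ into cyclic factors $\left\langle g_{\mathbb{Q}}\left[r\right]\right\rangle $ (Theorem 3.6 together with Lemma 3.7): the $2\times2$ blocks of $MSM^{-1}$ and $MTST^{-1}M^{-1}$ generate projectively only $\left\langle h_{0},\,g_{\mathbb{Q}}\left[1\right]\right\rangle $, so any word genuinely involving, say, $g_{\mathbb{Q}}\left[-\frac{1}{2}\right]$ lies outside it by normal-form considerations. Without this structural input (which is the content of the proof of Theorem 1.3), "pick $A$ whose projection has geometry incompatible with these constraints" is not yet an argument.

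The second gap is your claim that the verification $A\in\overline{\Gamma_{4}'}$ is routine. It is not: the $2\times2$ similitude picture lives in the coordinates conjugated by the matrix $M$, which has $\left(1+t\right)$ in denominators, and a generic element built from the elementary generators does \emph{not} conjugate back under $M^{-1}$ to a matrix with entries in $\mathbb{Z}\left[t,\,t^{-1}\right]$. The paper needs Lemma 3.9(3) (the condition that $\left(1,\,1,\,1\right)^{T}$ be an eigenvector of the evaluation at $t=-1$) and must replace its first candidate $C$ by $C\widetilde{h_{-1}}C\widetilde{h_{-1}}$ precisely to force this integrality; it then multiplies further by $\beta_{4,\,r}\left(\sigma_{1}^{-58854}\left(\sigma_{3}\sigma_{2}\sigma_{3}\right)^{19618}\right)$ to land the putative braid preimage inside the stable subgroup and hence inside $\left\langle b_{1},\,b_{2}\right\rangle $, where the weak centralizer hypothesis $\left(\mathrm{WCH}\right)$ — the precise consequence of faithfulness used here, namely $C_{\beta_{4,\,r}\left(B_{4}\right)}\left(\beta_{4,\,r}\left(\sigma_{3}\right)\right)\cap\left\langle S,\,T\right\rangle =\left\langle S,\,TST^{-1}\right\rangle $ — applies cleanly. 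Your proposal would need both of these repairs before the "routine" verifications go through.
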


The proofs of these two theorems proceed by introducing hypotheses
about the structure of the image and showing that the faithfulness
of the $B_{4}$ Burau implies these hypotheses. Specifically, the
proof of Theorem 1.1 relies on facts about the structure of the braid
Torelli group of $B_{4}$ (see \citep{MR3323579,MR545151}), while
the proof of Theorem 1.2 involves an analysis of the centralizer of
$\sigma_{3}$ in $B_{4}$, denoted $C_{B_{4}}\left(\sigma_{3}\right)$.
Finding a generating set for centralizers in braid groups is an algorithmic
problem (see \citep{MR747249,MR2023190}). For $B_{4}$, we have
\begin{align}
C_{B_{4}}\left(\sigma_{3}\right) & =\left\langle \sigma_{1},\,\sigma_{3},\,\left(\sigma_{3}\sigma_{2}\sigma_{3}\right)^{2}\right\rangle .
\end{align}

Next, we turn our attention to the faithfulness of the $B_{4}$ Burau,
which serves as the premise for the two theorems. Establishing this
remains a well-known open problem with a long history. Define two
braids in $B_{4}$ by
\begin{align*}
b_{1} & :=\sigma_{1}\sigma_{3}^{-1},\;b_{2}:=\sigma_{1}\sigma_{2}\sigma_{3}\sigma_{1}^{-1}\sigma_{2}^{-1}\sigma_{1}^{-1}.
\end{align*}
and define two matrices in $\mathrm{GL}\left(3,\,\mathbb{Z}\left[t,\,t^{-1}\right]\right)$
by
\begin{align*}
S & :=\beta_{4,\,r}\left(b_{1}\right),\;T:=\beta_{4,\,r}\left(b_{2}\right).
\end{align*}

Birman \citep{MR0375281} showed that $\beta_{4,\,r}$ is faithful
if and only if the two matrices $S$ and $T$ generate a free group
of rank 2. Partial results regarding this freeness include the following:
$\left\langle S^{m},\,T^{n}\right\rangle $ is free of rank 2 when
$m,\,n\ge3$ (\citep{MR596856,MR3415244,MR3786776}); $\left\langle S,\,TST^{-1}\right\rangle $
is free of rank 2 (\citep{MR1113420}); $\left\langle S^{3},\,T^{3},\,\left(ST\right)^{3},\,\left(ST^{-1}\right)^{3}\right\rangle $
is free of rank 4 (\citep{MR4229222}). On the other hand, the mod
$p$ representation derived from $\beta_{4,\,r}$ has been shown to
be non-faithful for $p=2,\,3,\,5$ (\citep{MR1431138,MR1668343,gibson20234strandburauunfaithfulmodulo}).
In the process of proving Theorem 1.2, we establish the following
result, extending the work of \citep{MR1113420}. This is achieved
using the structure theory of $2\times2$ similitude groups over the
Laurent polynomial ring over general fields, as developed in \citep{lee2024buraurepresentationb3modulo}.

\noindent \begin{theorem}

The representation $\beta_{4,\,r}|_{C_{B_{4}}\left(\sigma_{3}\right)}$
is faithful. Moreover, this faithfulness is preserved modulo $p$
for every prime $p$. In particular, the matrices $S$ and $TST^{-1}$
generate a free group of rank 2 modulo $p$ for every $p$.

\noindent \end{theorem}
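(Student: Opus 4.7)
The plan is to reduce the faithfulness statement for $\beta_{4,\,r}|_{C_{B_{4}}(\sigma_{3})}$, together with its mod-$p$ refinement, to the mod-$p$ faithfulness of the reduced $B_{3}$ Burau representation, recently established in \citep{lee2024buraurepresentationb3modulo} via the structure theory of $2\times 2$ similitude groups over Laurent polynomial rings. The first ingredient is the structural decomposition
\[
C_{B_{4}}(\sigma_{3}) \;\cong\; \langle\sigma_{3}\rangle \,\times\, B_{3}',
\]
a central direct product obtained geometrically by collapsing the $\sigma_{3}$-arc in the $4$-punctured disk to a single marked point, which reduces the relevant mapping class group to $B_{3}$. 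Under this splitting $\sigma_{1}$ lifts one standard generator of $B_{3}'$, while the presentation generator $(\sigma_{3}\sigma_{2}\sigma_{3})^{2}$ of (1.2) is a product of a power of $\sigma_{3}$ with an element of $B_{3}'$ (geometrically, the full twist of strand $2$ about the blob replacing strands $3,4$).

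Next, I would exploit the spectral decomposition of $\beta_{4,\,r}(\sigma_{3})$, whose characteristic polynomial is $(\lambda-1)^{2}(\lambda+t)$. The $1$-eigenspace $V_{1}$, spanned by $e_{1}$ and $(0,\,t+1,\,t)^{\top}$, has rank $2$, and the $(-t)$-eigenspace $V_{-t}$, spanned by $e_{3}$, has rank $1$. Every matrix in $\beta_{4,\,r}(C_{B_{4}}(\sigma_{3}))$ preserves both eigenspaces, so projection onto $V_{1}$ yields a homomorphism
\[
\varphi : C_{B_{4}}(\sigma_{3}) \longrightarrow \mathrm{GL}_{2}(\mathbb{Q}(t))
\]
whose kernel contains $\sigma_{3}$. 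The key identification is that $\varphi|_{B_{3}'}$ coincides, up to change of basis, with the reduced $B_{3}$ Burau representation---a natural consequence of the ``collapse-the-arc'' geometric picture, which can be verified by explicit matrix computation on the chosen basis and then compared with the standard Burau matrices.

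Applying the main result of \citep{lee2024buraurepresentationb3modulo} then gives that $\varphi|_{B_{3}'}$ is faithful modulo every prime $p$. Assembling the direct product: any $g=\sigma_{3}^{k}w$ (with $w\in B_{3}'$) mapping to the identity in $\mathrm{GL}_{3}(\mathbb{F}_{p}[t,t^{-1}])$ must satisfy $\varphi(w)=I$ (forcing $w=1$ by the previous step), after which $(-t)^{k}=1$ via the $V_{-t}$-block forces $k=0$; this proves faithfulness mod $p$. For the ``in particular'' statement, $b_{1}=\sigma_{1}\sigma_{3}^{-1}$ lies in $C_{B_{4}}(\sigma_{3})$, whereas $b_{2}b_{1}b_{2}^{-1}$ lies in the conjugate centralizer $C_{B_{4}}(b_{2}\sigma_{3}b_{2}^{-1})$, which admits an analogous block decomposition. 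A ping-pong on the Bruhat--Tits tree of $\mathrm{SL}_{2}$ over $\mathbb{F}_{p}((t))$, applied to the $V_{1}$-projections of $S$ and $TST^{-1}$, then upgrades Moran's characteristic-zero result \citep{MR1113420} to the mod-$p$ setting.

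The main obstacle I anticipate is the identification of $\varphi|_{B_{3}'}$ with the reduced $B_{3}$ Burau in the precise form to which \citep{lee2024buraurepresentationb3modulo} directly applies, uniformly over all characteristics $p$. A secondary subtlety is that the decomposition $\mathbb{Z}[t,t^{-1}]^{3}=V_{1}\oplus V_{-t}$ holds only after inverting $1+t$; one must check that this localization is harmless mod $p$, which it is since $1+t$ remains nonzero in $\mathbb{F}_{p}[t]$ for every prime. Finally, the transfer of faithfulness to the pair $(S,\,TST^{-1})$ requires coordinating the two distinct block decompositions coming from $C_{B_{4}}(\sigma_{3})$ and $C_{B_{4}}(b_{2}\sigma_{3}b_{2}^{-1})$, which is where the tree-theoretic ping-pong enters rather than a direct appeal to the first faithfulness statement.
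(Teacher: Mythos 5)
Your first structural step is where the argument breaks. You claim $C_{B_{4}}\left(\sigma_{3}\right)\cong\left\langle \sigma_{3}\right\rangle \times B_{3}'$ with $B_{3}'$ (essentially) a copy of $B_{3}$, so that the $V_{1}$-block representation restricted to $B_{3}'$ can be identified with the reduced $B_{3}$ Burau and the mod-$p$ faithfulness of the latter can be quoted. No such decomposition exists. By the paper's Lemma 3.1, $C_{B_{4}}\left(\sigma_{3}\right)=\left\langle b_{1},\,b_{2}b_{1}b_{2}^{-1}\right\rangle \rtimes\left\langle \sigma_{3},\,\left(\sigma_{3}\sigma_{2}\sigma_{3}\right)^{2}\right\rangle$, and since $\sigma_{3}$ commutes with $b_{1}$ and with $b_{2}b_{1}b_{2}^{-1}$, this is $\mathbb{Z}\times\left(F_{2}\rtimes\mathbb{Z}\right)$, where $\left(\sigma_{3}\sigma_{2}\sigma_{3}\right)^{2}$ acts on $x=b_{1}$, $y=b_{2}b_{1}b_{2}^{-1}$ by $x\mapsto x^{-1}y^{-1}x$, $y\mapsto x^{-1}$ (equations (5)--(7)). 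Hence the abelianization of $F_{2}\rtimes\mathbb{Z}$ is $\mathbb{Z}^{2}$ and that of the full centralizer is $\mathbb{Z}^{3}$, whereas $B_{3}^{\mathrm{ab}}\cong\mathbb{Z}$ and $\left(\mathbb{Z}\times B_{3}\right)^{\mathrm{ab}}\cong\mathbb{Z}^{2}$; so the complement of $\left\langle \sigma_{3}\right\rangle$ is not $B_{3}$, and geometrically the arc-collapsing map lands only in the proper subgroup $\left\langle s_{1},\,s_{2}^{2}\right\rangle$ of $B_{3}$ (the collapsed puncture cannot be permuted). Consequently the key identification of $\varphi|_{B_{3}'}$ with the reduced $B_{3}$ Burau, and with it the appeal to \citep{lee2024buraurepresentationb3modulo} as a black box, has no group to apply to. The paper's actual route keeps the same $2\times2$ block reduction (via conjugation by $M$ and the splitting off of $\left\langle s_{3}\right\rangle$) but then works directly inside the projective similitude group $\mathrm{PS}\left(\mathbb{F}\right)$: it writes the block images $c,\,d$ as explicit words in $h_{0}$, $g_{\mathbb{F}}\left[1\right]$, $h_{-1}$ and central scalars, extracts a presentation of $\left\langle c,\,d\right\rangle$ with the single relation $dcd^{-1}=c^{-1}\left(d^{-1}cd\right)c$ matching relation (7), and concludes by Hopficity of linear groups, with a genuinely separate characteristic-$2$ argument using the additive generators $au_{\mathbb{F}}\left[f\right],\,al_{\mathbb{F}}\left[f\right]$ -- a case your proposal does not address at all.

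The final clause also needs no new machinery, and your treatment of it is both misdirected and incomplete: $b_{2}b_{1}b_{2}^{-1}$ lies in $C_{B_{4}}\left(\sigma_{3}\right)$ itself (it is one of the two free generators of the kernel in Lemma 3.1), not merely in a conjugate centralizer, so the freeness of $\left\langle S,\,TST^{-1}\right\rangle$ modulo $p$ is an immediate consequence of the faithfulness statement applied to the free subgroup $\left\langle b_{1},\,b_{2}b_{1}b_{2}^{-1}\right\rangle$. Your proposed ping-pong on the Bruhat--Tits tree over $\mathbb{F}_{p}\left(\left(t\right)\right)$ is asserted rather than carried out: exhibiting ping-pong domains in positive characteristic is exactly the step where Moran's order-theoretic argument over $\mathbb{R}$ fails, and it is precisely what the similitude-group structure theory is imported to replace. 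As written, the proposal therefore has two genuine gaps: the false $\mathbb{Z}\times B_{3}$ decomposition on which the main reduction rests, and an unsubstantiated mod-$p$ freeness argument at the end.
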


Theorem 1.3 has a corollary stating that the image of $\left\langle \left(\sigma_{1}\sigma_{2}\sigma_{1}\right)^{2},\,\left(\sigma_{3}\sigma_{2}\sigma_{3}\right)^{2}\right\rangle $
is free under the Burau representation modulo $p$ for every $p$
(Remark 3.8), extending a result by Smythe \citep{MR545151}.

Several authors \citep{MR1431138,MR1668343,MR3415244} have studied
the two-dimensional Bruhat-Tits building on which the Burau image
acts, relating the faithfulness problem to the geometry of buildings.
This connection will be explored in the final part of the paper. We
examine a large Bruhat-Tits building $X_{\mathbb{C}\left(t\right)}$
on which the unitary group, defined with $\mathbb{C}\left[t,\,t^{-1}\right]$
entries, acts. We show that the unipotent kernel $\mathcal{U}_{\mathbb{C}}$
(see Definition 4.4) acts simply transitively on the vertices of this
building. Under these conditions, the group $\mathcal{U}_{\mathbb{C}}$
admits a triangle presentation as a locally infinite $\widetilde{A_{2}}$
group (see \citep{MR1232965,MR1232966}). More generally, we establish
the following result.

\noindent \begin{theorem}

For any infnite cardinality $\kappa$, there is an $\widetilde{A_{2}}$
group where the number of vertices in the link of a vertex is $\kappa$.

\noindent \end{theorem}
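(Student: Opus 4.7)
The plan is to generalize the construction sketched in Section 4 from $\mathbb{C}$ to an arbitrary infinite field $F$ of cardinality $\kappa$, and to read off that the cardinality of the link of a vertex in the resulting building is $\kappa$.

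First, given an infinite cardinal $\kappa$, I would fix a field $F$ of cardinality $\kappa$; concretely one may take the algebraic closure of a purely transcendental extension of $\mathbb{Q}$ of transcendence degree $\kappa$. Replacing $\mathbb{C}$ by $F$ throughout Definition 4.4 and the preceding setup produces the unitary group over $F\left[t,t^{-1}\right]$ together with its unipotent kernel $\mathcal{U}_{F}$, acting on the Bruhat--Tits building $X_{F\left(t\right)}$ of the unitary group over the field $F\left(t\right)$ equipped with the involution $t\mapsto t^{-1}$.

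The core step is to verify that the simply transitive action of $\mathcal{U}_{\mathbb{C}}$ on the vertices of $X_{\mathbb{C}\left(t\right)}$ established in Section 4 generalizes to an action of $\mathcal{U}_{F}$ on $X_{F\left(t\right)}$ with the same property. The argument in the $\mathbb{C}$ case uses Iwasawa-type decompositions, explicit parametrizations of root subgroups as affine spaces over the base field, and standard facts about apartments in $\widetilde{A_{2}}$ buildings; none of these steps privilege $\mathbb{C}$, so the arguments should transfer verbatim once one confirms that the only property of $\mathbb{C}$ being used is that it is an infinite field (possibly of characteristic zero). Once simple transitivity is in hand, the Cartwright--Mantero--Steger--Zappa framework \citep{MR1232965,MR1232966} yields an $\widetilde{A_{2}}$ triangle presentation for $\mathcal{U}_{F}$ whose geometric realization is $X_{F\left(t\right)}$.

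Finally, the link of any vertex in this building is a spherical building of type $A_{2}$ over the residue field $F$, namely the projective plane $\mathbb{P}^{2}\left(F\right)$. Since $F$ is infinite of cardinality $\kappa$, this projective plane has exactly $\kappa$ points, so the link has $\kappa$ vertices, as required. The main obstacle will be the careful line-by-line check that no step in the simply-transitive argument for $\mathcal{U}_{\mathbb{C}}$ tacitly uses a topological or analytic feature of $\mathbb{C}$, and determining whether characteristic restrictions (for instance, $\mathrm{char}\,F\ne2$) are needed for the unitarity relation to behave correctly; if such restrictions arise, one can always confine oneself to characteristic zero, which still realizes every infinite cardinality.
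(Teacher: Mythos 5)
There is a genuine gap. Your central claim --- that the only property of $\mathbb{C}$ used in Section 4 is that it is an infinite field, so the construction transfers verbatim to an arbitrary infinite field $F$ --- is not correct. The construction is built over $\mathbb{E}\left(i\right)$ for a \emph{Euclidean} ordered field $\mathbb{E}$, and the order structure is used essentially at several points: the elementary generators $k_{\mathbb{E}}\left[r\right]$ require $\sqrt{1+r^{4}}$ to exist, the transitivity computation in Lemma 4.3 uses $1+r^{4}>0$ (in particular $1+r^{4}\ne0$), the relations (18)--(20) are written in terms of square roots of manifestly non-negative quantities, and the trivial-stabilizer statement in Theorem 4.5 rests on the fact that the identity is the only unipotent element of $\mathrm{U}\left(2,\,\mathbb{E}\left(i\right)\right)$, i.e.\ on the anisotropy of the standard Hermitian form over $\mathbb{E}\left(i\right)$ with the involution $i\mapsto-i$ extending the bar involution. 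Over your proposed field (an algebraic closure of a purely transcendental extension of $\mathbb{Q}$), with no distinguished coefficient involution, the equation $x^{4}=-1$ has solutions, so $1+r^{4}$ vanishes for some $r$ and the generators are not even defined; the relevant form becomes isotropic, the ``unitary'' group contains nontrivial unipotents stabilizing the identity lattice, and simple transitivity --- the whole point of the $\widetilde{A_{2}}$ triangle presentation via Cartwright--Mantero--Steger--Zappa --- fails to follow by the paper's argument. Your closing hedge, that one can ``confine oneself to characteristic zero,'' does not repair this, since characteristic zero alone supplies neither the ordering, nor the square roots, nor the anisotropy.

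The idea you are missing is the one the paper actually uses: keep the Euclidean-field hypothesis intact and instead show that Euclidean fields exist in every infinite cardinality. Since the theory of Euclidean fields is first-order and has an infinite model ($\mathbb{R}$, or the real constructible numbers), the upward L\"owenheim--Skolem theorem produces a Euclidean field $\mathbb{E}$ of any prescribed infinite cardinality $\kappa$; running Section 4 over $\mathbb{E}\left(i\right)$ then gives an $\widetilde{A_{2}}$ group whose vertex links have $\kappa$ vertices, because the link is governed by the projective plane $\mathbb{E}\left(i\right)\mathbf{P}^{2}$, which has cardinality $\kappa$. (A more concrete substitute, e.g.\ taking the real closure of a purely transcendental extension of $\mathbb{Q}$ of transcendence degree $\kappa$, would also serve, but some step of this kind is indispensable.) Your final cardinality count of the link is fine; it is the choice of base field and the transfer of simple transitivity that your proposal does not justify.
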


Finally, we examine a locally finite connected simplicial subcomplex
$\Delta_{\mathbb{C}}\left(\mathcal{H}\right)$ of the building, which
is spanned by a finitely generated subgroup $\mathcal{H}$ of $\mathcal{U}_{\mathbb{C}}$
which contains a finite index subgroup of $\left\langle S,\,T\right\rangle $
up to some conjugation (see Section 4 for definitions). An analysis
of this subcomplex leads to the following criterion for faithfulness.
It is worth mentioning that $\Delta_{\mathbb{C}}\left(\mathcal{H}\right)$
is contained in a much smaller building $X_{\mathbb{Q}\left(i\right)\left(t\right)}$.

\noindent \begin{theorem}

If the subcomplex $\Delta_{\mathbb{C}}\left(\mathcal{H}\right)$ is
simply connected, the representation $\beta_{4,\,r}$ is faithful.

\noindent \end{theorem}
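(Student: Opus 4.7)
The plan is to invoke Birman's criterion that $\beta_{4,r}$ is faithful if and only if $S$ and $T$ generate a free group of rank $2$, and to argue by contraposition: from a hypothetical nontrivial relation in $\langle S,T\rangle$, I will construct a loop in $\Delta_{\mathbb{C}}(\mathcal{H})$ that cannot be filled, contradicting simply connectedness.

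The geometric engine is the simply transitive action of $\mathcal{U}_{\mathbb{C}}$ on the vertex set of $X_{\mathbb{C}(t)}$ (of a fixed type), established in the lead-up to Theorem 1.4. Fix a base vertex $v_0$; the orbit map $h\mapsto h\cdot v_0$ identifies $\mathcal{H}$ bijectively with the corresponding $\mathcal{H}$-orbit of vertices in $\Delta_{\mathbb{C}}(\mathcal{H})$. Combined with local finiteness and the finite generation of $\mathcal{H}$, the $\mathcal{H}$-action on $\Delta_{\mathbb{C}}(\mathcal{H})$ is free and cocompact. If $\Delta_{\mathbb{C}}(\mathcal{H})$ is simply connected, it is the universal cover of the finite $2$-complex $\Delta_{\mathbb{C}}(\mathcal{H})/\mathcal{H}$, and hence $\mathcal{H}$ admits a finite presentation $\langle X\mid R\rangle$ where $X$ labels the edges at $v_0$ and $R$ consists of triangular (length-three) relators inherited from the $\widetilde{A_2}$ chamber structure.

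Next, import $\langle S,T\rangle$ into this framework. By hypothesis, $\mathcal{H}$ contains $g^{-1}\Lambda g$ for some $g\in\mathcal{U}_{\mathbb{C}}$ and some finite-index subgroup $\Lambda\le\langle S,T\rangle$. A nontrivial reduced word $w(S,T)$ evaluating to the identity matrix, raised to the index $[\langle S,T\rangle:\Lambda]$ and conjugated by $g$, gives a nontrivial element of the free group on two letters whose value is trivial in $\mathcal{H}$. Re-expressed in the generating set $X$, this produces a closed edge-loop $\gamma$ in $\Delta_{\mathbb{C}}(\mathcal{H})$ based at $v_0$. Simply connectedness then forces $\gamma$ to bound a van Kampen disk whose $2$-cells are building triangles.

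The decisive step, and the principal obstacle, is to upgrade the existence of such a disk into a genuine algebraic statement contradicting the nontriviality of $w$. My plan combines two ingredients: (i) the explicit triangle presentation of $\mathcal{U}_{\mathbb{C}}$ from the Cartwright--Mantero--Steger--Zappa framework \citep{MR1232965,MR1232966} already invoked for Theorem 1.4, which pins down exactly which length-three words can vanish in $\mathcal{U}_{\mathbb{C}}$; and (ii) a ping-pong style normal-form argument in the apartment through $v_0$, extending techniques of \citep{MR1113420} and the mod-$p$ estimates of Theorem 1.3, showing that distinct reduced words in $S^{\pm 1},T^{\pm 1}$ drive $v_0$ to geodesically distinct vertices. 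The hard part will be establishing (ii) with enough uniformity that no finite stacking of building triangles can cancel a reduced $S,T$-word; once that is secured, $\gamma$ cannot be nullhomotopic, producing the desired contradiction and yielding the faithfulness of $\beta_{4,r}$.
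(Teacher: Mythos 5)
Your setup is right as far as it goes: Birman's criterion, the simply transitive action of $\mathcal{U}_{\mathbb{C}}$ on the vertices, the passage to the finite-index subgroup $\mathcal{G}\le\langle S',T'\rangle$ inside $\mathcal{H}$, and the fact that simple connectivity lets one read off a presentation of $\mathcal{H}$ from the link of the identity lattice. But your ``decisive step'' is where the proof actually lives, and what you propose there is both circular and logically inverted. First, the inversion: simple connectivity is the \emph{hypothesis}, so you cannot reach a contradiction by exhibiting a loop ``that cannot be filled'' --- in a simply connected complex every closed edge-loop is nullhomotopic, full stop. The hypothesis must be used positively, to pin down the relations of $\mathcal{H}$, not negatively as something to be contradicted. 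Second, the circularity: your ingredient (ii), a ping-pong/normal-form argument showing that distinct reduced words in $S^{\pm1},T^{\pm1}$ move the base vertex to distinct vertices, is exactly equivalent to the faithfulness statement you are trying to prove (the action being simply transitive, $w\cdot v_0\ne v_0$ already gives $w\ne I$), and it would make the simple-connectivity hypothesis irrelevant. No such unconditional argument is known --- if it were, Theorem 1.5 would need no hypothesis --- so the plan as stated cannot be completed.

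What the paper actually does at this point is concrete and finite. Simple connectivity plus Lemma 4.7 (exactly 11 type-1 vertices of $\Delta_{\mathbb{C}}(\mathcal{H})$ in $\mathrm{Lk}_{[I]}$) yields, after Tietze transformations, an explicit presentation of $\mathcal{H}$ with 5 relations, identifying $\mathcal{H}$ as the right-angled Artin group
\begin{align*}
\left(\left\langle d_{1},\,g_{1},\,g_{3}\right\rangle \times\left\langle d_{2}\right\rangle \right)*_{\left\langle d_{1},\,d_{2}\right\rangle }\left(\left\langle d_{2},\,g_{2},\,g_{4}\right\rangle \times\left\langle d_{1}\right\rangle \right).
\end{align*}
One then exhibits a surjection $\mathcal{F}:\mathcal{H}\to\mathbb{Z}$ whose kernel is free of rank $9$ with explicit generators $l_1,\dots,l_9$, rewrites the Reidemeister--Schreier generators $a_1,\dots,a_9$ of $\mathcal{G}$ as words in the $l_j$, and runs Stallings' folding to verify that $\langle a_1,\dots,a_9\rangle$ is free of rank $9$. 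Since the preimage of $\mathcal{G}$ in the free group on $S',T'$ is free of rank $9$ and surjects onto a free group of rank $9$, Hopficity forces this surjection to be an isomorphism, so $\langle S',T'\rangle$ is free of rank $2$ and Birman's criterion applies. Your proposal is missing precisely this mechanism --- an explicit identification of $\mathcal{H}$ followed by a combinatorial rank computation --- and substitutes for it an argument that either begs the question or contradicts its own hypothesis.
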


The rest of this paper is organized as follows. In Section 2, we prove
Theorem 1.1. In Section 3, we examine the stabilizer $C_{B_{4}}\left(\sigma_{3}\right)$,
and prove Theorem 1.3 and Theorem 1.2. In Section 4, we prove Theorem
1.4 and Theorem 1.5.

\section{Hereditary Counterexamples}

Define two vectors
\begin{align*}
v_{n} & :=\left(t,\,t^{2},\,\cdots,\,t^{n}\right),\;\overrightarrow{1_{n}}:=\left(1,\,1,\,\cdots,\,1\right)^{T}.
\end{align*}

Every matrix $A\in\beta_{n,\,u}\left(B_{n}\right)$ satisfies the
following.
\begin{align*}
 & A\overrightarrow{1_{n}}=\overrightarrow{1_{n}},\\
 & v_{n}A=v_{n},
\end{align*}
where we followed Salter's notation, except that the matrix transpose
was taken. This convention is due to \citep{MR2435235}. Salter also
noticed that the image of the evaluation of the Burau image at $t=1$
is isomorphic to the symmetric group $S_{n}$, by the permutation
representation. Then, he formally defined the target groups $\Gamma_{n},\,\Gamma_{n}'$
in \citep[Definition 2.5]{MR4228497}.

\noindent \begin{definition}

The group $\Gamma_{n}$ is defined as follows:
\begin{align*}
\Gamma_{n} & :=\left\{ A\in\mathrm{GL}\left(n,\,\mathbb{Z}\left[t,t^{-1}\right]\right)\::\:v_{n}A=v_{n},\:A\overrightarrow{1_{n}}=\overrightarrow{1_{n}},\:\overline{A}J_{n}A^{T}=J_{n},\:A|_{t=1}\in S_{n}\right\} .
\end{align*}

Define the group $\Gamma_{n}'$ to be the image of $\Gamma_{n}'$
when restricted to the $J_{n}$-orthogonal complement. Denote by $\overline{\Gamma_{n}}$
(resp. $\overline{\Gamma_{n}'}$) the quotient of $\Gamma_{n}$ (resp.
$\Gamma_{n}'$) by its center.

\noindent \end{definition}

In some contexts, we will also call $\overline{\Gamma_{n}}$ or $\overline{\Gamma_{n}'}$
the target group, if there is no possibility for confusion. Define
the map $\overline{\beta_{n,\,u}}:B_{n}\to\overline{\Gamma_{n}}$
(resp. $\overline{\beta_{n,\,r}}:B_{n}\to\overline{\Gamma_{n}'}$)
to be the composition of the quotient $q_{n}:\Gamma_{n}\to\overline{\Gamma_{n}}$
(resp. $q_{n}':\Gamma_{n}'\to\overline{\Gamma_{n}'}$) and the Burau
representation $\beta_{n,\,u}$ (resp. $\beta_{n,\,r}$). Here is
Salter's question \citep[Question 1.1]{MR4228497}.

\noindent \begin{question}

Are the maps $\overline{\beta_{n,\,u}}:B_{n}\to\overline{\Gamma_{n}}$
and $\overline{\beta_{n,\,r}}:B_{n}\to\overline{\Gamma_{n}'}$ surjective?

\noindent \end{question}

The author algorithmically constructed counterexamples for the case
$n=3$ \citep[Section 5]{lee2024salters}. By definition, the group
$\Gamma_{n+1}$ contains an isomorphic copy of $\Gamma_{n}$ for each
$n>1$, via the inclusion:
\begin{align*}
\Gamma_{n} & \hookrightarrow\left(\begin{array}{cc}
1\\
 & \Gamma_{n}
\end{array}\right)\subset\Gamma_{n+1},
\end{align*}
where the first row and column have 0 as the $\left(1,\,i\right)$-entry
and $\left(j,\,1\right)$-entry for every $i,\,j>1$. We refer to
this situation as the matrix in $\Gamma_{n+1}$ having the \emph{trivial
first row and column}. Similarly, the group $\Gamma_{n+1}'$ contains
an isomorphic copy of $\Gamma_{n}'$ for each $n>1$, via the inclusion:
\begin{align*}
\Gamma_{n}' & \hookrightarrow\left(\begin{array}{cc}
1\\
 & \Gamma_{n}'
\end{array}\right)\subset\Gamma_{n+1}',
\end{align*}
where the first row has 0 as $\left(1,\,i\right)$-entry for every
$i>1$ (we refer to this situation as the matrix in $\Gamma_{n+1}'$
having the \emph{trivial first row}) and the first column entries
$\left(j,\,1\right)$, $j>1$, are completely determined by the other
entries. Based on these observations, we formulate \emph{separation
hypotheses}.

\noindent \begin{hypothesis}

$ $
\begin{itemize}
\item $\left(\mathrm{SH}_{n}\right)$ If a matrix $A\in\beta_{n,\,u}\left(B_{n}\right)$
has the trivial first row and column, then $A$ is included in the
subgroup $\left\langle \beta_{n,\,u}\left(\sigma_{2}\right),\,\beta_{n,\,u}\left(\sigma_{3}\right),\,\cdots,\,\beta_{n,\,u}\left(\sigma_{n-1}\right)\right\rangle $.
\item $\left(\mathrm{SH}_{n}'\right)$ If a matrix $B\in\beta_{n,\,r}\left(B_{n}\right)$
has the trivial first row, then $B$ is included in the subgroup $\left\langle \beta_{n,\,r}\left(\sigma_{2}\right),\,\beta_{n,\,r}\left(\sigma_{3}\right),\,\cdots,\,\beta_{n,\,r}\left(\sigma_{n-1}\right)\right\rangle $.
\end{itemize}
\noindent \end{hypothesis}

Historically, Dehornoy in \citep{MR1381685} was the first to propose
hypotheses of this type. According to his notation, the faithfulness
of $\beta_{n,\,u}$ and $\left(\mathrm{SH}_{n}\right)$ both hold
if and only if $\mathcal{P}_{k}^{+}\left(B_{n}\right)$ holds for
every $k$. The hypothesis $\left(\mathrm{SH}_{3}\right)$ was shown
in \citep[Proposition 1, p. 22]{MR1381685}. The hypothesis $\left(\mathrm{SH}_{3}'\right)$
is also straightforward from the unitarity.

\noindent \begin{lemma}

For an integer $n\ge4$, assume the hypotheses $\left(\mathrm{SH}_{4}\right),\,\left(\mathrm{SH}_{5}\right),\,\cdots,\,\left(\mathrm{SH}_{n}\right)$
(resp. $\left(\mathrm{SH}_{4}'\right),\,\left(\mathrm{SH}_{5}'\right),\,\cdots,\,\left(\mathrm{SH}_{n}'\right)$).
Then, there is an element $A\in\overline{\Gamma_{n}}$ (resp. $A\in\overline{\Gamma_{n}'}$)
that commutes with $\beta_{n,\,u}\left(\left(\sigma_{n-1}\sigma_{n-2}\sigma_{n-1}\right)^{2}\right)$
(resp. $\beta_{n,\,r}\left(\left(\sigma_{n-1}\sigma_{n-2}\sigma_{n-1}\right)^{2}\right)$)
and lies outside the image $\overline{\beta_{n,\,u}}\left(B_{n}\right)$
(resp. $\overline{\beta_{n,\,r}}\left(B_{n}\right)$).

\noindent \end{lemma}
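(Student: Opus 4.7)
The plan is to descend from the known counterexamples in the $n=3$ case: start with a counterexample $[A_0] \in \overline{\Gamma_3}$ produced by the tree algorithm of \citep{lee2024salters}, lift it along the chain of embeddings $\Gamma_3 \hookrightarrow \Gamma_4 \hookrightarrow \cdots \hookrightarrow \Gamma_n$ to a matrix $A$ with $n-3$ trivial initial rows and columns, and then invoke the hypotheses $\left(\mathrm{SH}_k\right)$ (resp.\ $\left(\mathrm{SH}_k'\right)$) to peel off these trivial layers one at a time when ruling $A$ out of the Burau image.

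Concretely, pick $A_0 \in \Gamma_3$ whose class $[A_0] \in \overline{\Gamma_3}$ is a counterexample from \citep{lee2024salters} and which additionally commutes with $\beta_{3,u}\left((\sigma_2 \sigma_1 \sigma_2)^2\right)$ in $\Gamma_3$. (Since $(\sigma_2 \sigma_1 \sigma_2)^2$ generates the center of $B_3$, such a commuting counterexample either appears directly among the outputs of the tree algorithm or is obtained from one by a small modification.) Set $A := \mathrm{diag}(I_{n-3}, A_0) \in \Gamma_n$. The braid $(\sigma_{n-1} \sigma_{n-2} \sigma_{n-1})^2$ involves only $\sigma_{n-2}$ and $\sigma_{n-1}$, so its unreduced Burau image has the block form $\mathrm{diag}(I_{n-3}, M_0)$, where $M_0$ matches $\beta_{3,u}\left((\sigma_2 \sigma_1 \sigma_2)^2\right)$ under the obvious identification; block-by-block commutation in $\Gamma_n$ then descends to the required commutation in $\overline{\Gamma_n}$.

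Suppose, for contradiction, that $[A] \in \overline{\beta_{n,u}}(B_n)$, so $\beta_{n,u}(\gamma) = z\, A$ for some $\gamma \in B_n$ and $z \in Z(\Gamma_n)$. The conditions $(zA)\overrightarrow{1_n} = \overrightarrow{1_n}$ and $v_n (zA) = v_n$ eliminate scalar central twists, and a direct inspection of $Z(\Gamma_n)$ shows that any admissible $z$ preserves the trivial-first-row-and-column structure; hence $zA$ itself still has trivial first row and column. Hypothesis $\left(\mathrm{SH}_n\right)$ then places $zA$ inside $\langle \beta_{n,u}(\sigma_2), \ldots, \beta_{n,u}(\sigma_{n-1}) \rangle$, which via the embedding $\Gamma_{n-1} \hookrightarrow \Gamma_n$ is identified with $\beta_{n-1,u}(B_{n-1})$, still carrying a trivial first row and column inside $\Gamma_{n-1}$. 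Iterating with $\left(\mathrm{SH}_{n-1}\right), \left(\mathrm{SH}_{n-2}\right), \ldots, \left(\mathrm{SH}_4\right)$ eventually forces $A_0 \in \beta_{3,u}(B_3)$, contradicting the choice of $A_0$. The reduced case is parallel, with $\Gamma_k'$, $\overline{\beta_{k,r}}$, and $\left(\mathrm{SH}_k'\right)$ in place of their unreduced counterparts.

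The principal obstacle is the central-element bookkeeping in the descent: at each step one must verify that multiplication by $z \in Z(\Gamma_k)$ (resp.\ $Z(\Gamma_k')$) cannot destroy the trivial-first-row-and-column structure needed to invoke the next separation hypothesis. In the unreduced setting this follows from the constraint $A\overrightarrow{1_k} = \overrightarrow{1_k}$, which kills scalar twists, together with a routine check that any non-scalar central element must already respect the relevant block decomposition; the reduced case requires a slightly more delicate argument tied to the orthogonal-complement description of $\Gamma_k'$.
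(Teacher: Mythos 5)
Your overall route is the same as the paper's: embed a counterexample $A_{0}\in\Gamma_{3}$ from \citep{lee2024salters} in the lower-right block as $\mathrm{diag}\left(I_{n-3},\,A_{0}\right)$, and use $\left(\mathrm{SH}_{n}\right),\,\left(\mathrm{SH}_{n-1}\right),\,\cdots,\,\left(\mathrm{SH}_{4}\right)$ to peel off the trivial layers inductively; that is precisely the paper's short proof. However, your central-element bookkeeping contains a false claim. A nontrivial element $z\in Z\left(\Gamma_{n}\right)$ fixes $\overrightarrow{1_{n}}$ and acts as a unit scalar $u$ on $\ker v_{n}$, so as a matrix it is $uI_{n}$ plus a rank-one correction supported on $\overrightarrow{1_{n}}v_{n}$; for instance the Burau image of the full twist equals $t^{n}I_{n}+\frac{1-t}{t}\,\overrightarrow{1_{n}}v_{n}$, whose first row is $\left(t^{n}+1-t,\;\left(1-t\right)t,\;\cdots,\;\left(1-t\right)t^{n-1}\right)$ with every entry nonzero. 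So "any admissible $z$ preserves the trivial-first-row-and-column structure" is exactly wrong: for $z\neq I$ the product $zA$ does \emph{not} satisfy the premise of $\left(\mathrm{SH}_{n}\right)$, and the step where you invoke the hypothesis on $zA$ fails. (The remark that the conditions $A\overrightarrow{1_{n}}=\overrightarrow{1_{n}}$, $v_{n}A=v_{n}$ kill scalar matrices is true but does no work, since the center of $\Gamma_{n}$ does not consist of scalars.) The correct repair is to identify $Z\left(\Gamma_{n}\right)$ (for the case at hand, $Z\left(\Gamma_{4}\right)$) with the cyclic group generated by the Burau image of the full twist, which lies in $\beta_{n,\,u}\left(B_{n}\right)$; one then absorbs the central factor into the braid $\gamma$ and applies $\left(\mathrm{SH}_{n}\right)$ to the matrix $A$ itself, which is what the paper implicitly does. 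An analogous identification of $Z\left(\Gamma_{n}'\right)$ is needed in the reduced case, which you defer without argument.

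A second, smaller point: the extra condition you impose on $A_{0}$, that it commute with $\beta_{3,\,u}\left(\left(\sigma_{2}\sigma_{1}\sigma_{2}\right)^{2}\right)$, is justified only by the vague remark that such a counterexample "appears among the outputs or is obtained by a small modification." No choice is needed: $\left(\sigma_{2}\sigma_{1}\sigma_{2}\right)^{2}$ generates $Z\left(B_{3}\right)$, its reduced Burau image is the scalar $t^{3}I_{2}$, and every element of $\Gamma_{3}$ preserves the splitting $\left\langle \overrightarrow{1_{3}}\right\rangle \oplus\ker v_{3}$, so this matrix is central in all of $\Gamma_{3}$ and the required commutation holds automatically for any counterexample — which is why the paper's proof does not address it at all. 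Stating this explicitly would remove the hand-waving; as written, both this step and the central-element step are gaps, the second being the substantive one.
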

\begin{proof}For the case $n=4$, choose a counterexample $A\in\overline{\Gamma_{3}}$
which lies outside the image $\overline{\beta_{3,\,u}}\left(B_{3}\right)$
and choose a lift $\widehat{A}\in\Gamma_{3}$ for $A$. The inclusion
yields
\begin{align*}
 & \left[\begin{array}{cc}
1\\
 & \widehat{A}
\end{array}\right]\subset\overline{\Gamma_{n+1}},
\end{align*}
where the square bracket $\left[\right]$ means the matrix is included
in the projective linear group. By $\left(\mathrm{SH}_{4}\right)$,
this element must lie outside the image of $\overline{\beta_{4,\,u}}\left(B_{4}\right)$.
The higher $n$ cases are established by induction, and the cases
for the reduced Burau are handled in the same manner. $\qedhere$

\noindent \end{proof}

At first glance, the separation hypotheses might appear obvious. However,
they are far from trivial. Dehornoy proved the following fact.

\noindent \begin{lemma}

The faithfulness of the representation $\beta_{n+1,\,u}$ implies
$\left(\mathrm{SH}_{n}\right)$.

\noindent \end{lemma}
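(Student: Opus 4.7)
The plan is to exploit the two canonical inclusions $L,R:B_n\hookrightarrow B_{n+1}$ given by $L(\sigma_i)=\sigma_i$ and $R(\sigma_i)=\sigma_{i+1}$, whose images in $B_{n+1}$ are the strand-$(n{+}1)$-fixing and strand-$1$-fixing parabolic subgroups respectively, and to use the faithfulness of $\beta_{n+1,u}$ to force a braid into their intersection.

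First I would observe that faithfulness of $\beta_{n+1,u}$ descends to $\beta_{n,u}$ via the block identity $\beta_{n+1,u}(L(w))=\left(\begin{smallmatrix}\beta_{n,u}(w)&0\\0&1\end{smallmatrix}\right)$, so any $A=\beta_{n,u}(w)\in\beta_{n,u}(B_n)$ admits a \emph{unique} preimage $w\in B_n$. It therefore suffices to show that, whenever $A$ has trivial first row and column, $w$ already lies in $\langle\sigma_2,\ldots,\sigma_{n-1}\rangle_{B_n}$.

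Lift $w$ to $L(w)\in B_{n+1}$. Its Burau image is $M=\left(\begin{smallmatrix}A&0\\0&1\end{smallmatrix}\right)$, which has trivial last row and column automatically and, by hypothesis on $A$, trivial first row and column as well. The key step is to exhibit a companion braid $\widetilde{w}\in R(B_n)=\langle\sigma_2,\ldots,\sigma_n\rangle_{B_{n+1}}$ with $\beta_{n+1,u}(\widetilde{w})=M$. Once such $\widetilde{w}$ is in hand, faithfulness of $\beta_{n+1,u}$ forces $\widetilde{w}=L(w)$, placing $L(w)\in L(B_n)\cap R(B_n)$. By the standard intersection formula for parabolic subgroups of a braid group (van der Lek) this intersection equals $\langle\sigma_2,\ldots,\sigma_{n-1}\rangle_{B_{n+1}}$, and pulling back along the injection $L$ yields $w\in\langle\sigma_2,\ldots,\sigma_{n-1}\rangle_{B_n}$, which is exactly $(\mathrm{SH}_n)$.

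The main obstacle is the construction of $\widetilde{w}$, equivalently the verification that the shifted matrix $\left(\begin{smallmatrix}1&0\\0&A\end{smallmatrix}\right)$ lies in $\beta_{n+1,u}(R(B_n))$. I would attempt this by conjugating $L(w)$ inside $B_{n+1}$ by an element $\delta\in B_{n+1}$ whose Burau action swaps the $L$-block and the $R$-block on matrices already having trivial first and last rows/columns; natural candidates are a power of the half-twist $\Delta_{n+1}$ or a product of the form $\sigma_n\sigma_{n-1}\cdots\sigma_1$. The unitarity relation $\overline{M}J_{n+1}M^{T}=J_{n+1}$, combined with the vanishing of the first and last rows/columns of $M$, furnishes enough rigidity to verify that such a conjugate has exactly the required Burau image and that its support lies inside $R(B_n)$. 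Making this block-shift move precise---simultaneously matching unitarity and controlling the support of the conjugate---is where essentially all the technical content sits, and is the step I would expect to require the most careful bookkeeping.
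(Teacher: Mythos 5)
Your overall skeleton (force $L(w)$ into $L(B_n)\cap R(B_n)$ and invoke the parabolic intersection theorem) is reasonable, but the step you yourself flag as the ``main obstacle'' is not a technical bookkeeping issue: it is the entire content of the lemma, and the mechanism you propose for it cannot work. You need some $\widetilde{w}\in R(B_n)$ with $\beta_{n+1,\,u}(\widetilde{w})=M=\mathrm{diag}(1,\,B,\,1)$; since $\beta_{n+1,\,u}\circ R$ has the block form $\mathrm{diag}(1,\,\beta_{n,\,u}(\cdot))$, this is exactly the assertion that $\mathrm{diag}(B,\,1)\in\beta_{n,\,u}(B_n)$. Conjugation inside $B_{n+1}$ cannot produce such a $\widetilde{w}$: if $\delta\,\sigma_i\,\delta^{-1}=\sigma_{i+1}$ (e.g.\ $\delta=\sigma_1\sigma_2\cdots\sigma_n$), then $\delta L(w)\delta^{-1}=R(w)$ does lie in $R(B_n)$, but its Burau matrix is the conjugate $\beta_{n+1,\,u}(\delta)\,M\,\beta_{n+1,\,u}(\delta)^{-1}=\mathrm{diag}(1,\,1,\,B)$, not $M$; the same objection applies to $\Delta_{n+1}$. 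Any conjugation moves the matrix along with the braid, so it can never hand you an element of $R(B_n)$ whose image is literally $M$, and unitarity plus the vanishing rows/columns gives no mechanism for producing one. Worse, the missing claim is essentially equivalent to $(\mathrm{SH}_n)$ itself: if $(\mathrm{SH}_n)$ holds, then $B\in\beta_{n-1,\,u}(B_{n-1})$ and $\mathrm{diag}(B,\,1)$ is the image of the corresponding braid under the inclusion $B_{n-1}\hookrightarrow B_n$ on the first $n-1$ strands, while conversely (as you show) the claim implies $(\mathrm{SH}_n)$. So the difficulty has been displaced into an unproven statement of the same strength, and the argument is circular at its core.

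For comparison, the paper proves this lemma by citing Dehornoy, and the workable version of your idea runs differently: since $A$ has trivial first row and column, the matrix $\beta_{n+1,\,u}(R(w))=\mathrm{diag}(1,\,A)$ has identity upper-left $2\times2$ block and hence commutes with $\beta_{n+1,\,u}(\sigma_1)$; faithfulness of $\beta_{n+1,\,u}$ then forces $R(w)$ to commute with $\sigma_1$ in $B_{n+1}$, and one concludes with a group-theoretic fact about the intersection of the centralizer $C_{B_{n+1}}(\sigma_1)$ with the parabolic $\left\langle \sigma_2,\ldots,\sigma_n\right\rangle$, rather than by matching Burau matrices of two different braids. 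If you want to repair your write-up, that commutation step is the piece to substitute for your unproven ``block-shift'' claim.
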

\begin{proof}See \citep[Lemma 3, p. 5]{MR1381685}. $\qedhere$

\noindent \end{proof}

Since the Burau representation is known to be non-faithful for $n>4$
\citep{MR1217079,MR1725480}, $\left(\mathrm{SH}_{n}\right)$ cannot
be proven through the faithfulness of $\beta_{n+1,\,u}$. However,
Lemma 2.5 implies that proving the negation of $\left(\mathrm{SH}_{n}\right)$
offers a new method to prove the non-faithfulness of $\beta_{n+1,\,u}$.
The validity of these hypotheses appears to be closely related to
the properties of the braid Torelli groups, which are the kernels
of the integral Burau representations from $B_{n}$ to $\mathrm{SL}\left(n,\,\mathbb{Z}\right)$,
or $\beta_{n,\,u}$ evaluated at $t=-1$ (see \citep{MR3323579}).

\noindent \begin{prooftheorem11}

\noindent By Lemma 2.4, it suffices to show that the faithfulness
of $\beta_{4,\,u}$ implies $\left(\mathrm{SH}_{4}\right)$. Suppose
that $\beta_{4,\,u}$ is faithful. Suppose a matrix $A\in\beta_{4,\,u}\left(B_{4}\right)$
has the trivial first row and column. Then, there is a matrix $B\in\Gamma_{3}$
such that
\begin{align*}
A & =\left(\begin{array}{cc}
1\\
 & B
\end{array}\right).
\end{align*}

Since the image of $\Gamma_{3}$ evaluated at $t=-1$ is isomorphic
to $\mathrm{SL}\left(2,\,\mathbb{Z}\right)$, just as the integral
Burau image of $B_{3}$, we may assume that $B$ is included in the
kernel of the evaluation at $t=-1$, by multiplying the image of a
braid in $B_{3}$. This kernel is isomoprhic to $F\times\mathbb{Z}$,
where $F$ is a free group and the infinite cyclic factor is generated
by $\beta_{3,\,u}\left(\sigma_{1}\sigma_{2}\sigma_{1}\right)^{4}$
(see \citep[Theorem 3.17]{lee2024salters}). If $B$ is a power of
$\beta_{3,\,u}\left(\sigma_{1}\sigma_{2}\sigma_{1}\right)^{4}$, we
establish the $\left(\mathrm{SH}_{4}\right)$. Suppose that $B$ is
not a power.

The braid Torelli group for $B_{4}$ is a free group of infinite rank,
normally generated by two elements $\left(\sigma_{1}\sigma_{2}\sigma_{1}\right)^{4}$
and $\left(\sigma_{3}\sigma_{2}\sigma_{3}\right)^{4}$ (see \citep[p. 316]{MR545151}).
Assuming the faithfulness of $\beta_{4,\,u}$, the matrix $A$ must
generate a free group of rank 2 together with $\beta_{4,\,u}\left(\sigma_{3}\sigma_{2}\sigma_{3}\right)^{4}$.
This leads to a contradiction, as $B$ must commute with the central
element $\beta_{3,\,u}\left(\sigma_{1}\sigma_{2}\sigma_{1}\right)^{4}$.
$\qed$

\noindent \end{prooftheorem11}
\begin{remark}

Long\textendash Paton \citep[Corollary 1.2]{MR1217079} and Dehornoy
\citep[Proposition 5, p. 11]{MR1381685} demonstrated that a matrix
$A\in\beta_{n,\,u}\left(B_{n}\right)$ has the trivial first row if
and only if $A$ has the trivial first column. In fact, a stronger
result holds: a matrix $A\in\beta_{n,\,u}\left(B_{n}\right)$ has
1 in the $\left(1,\,1\right)$-entry, then $A$ has the trivial first
row and column. Consequently, the premise of the separation hypotheses
$\left(\mathrm{SH}_{n}\right)$ can be simplified to requiring that
a matrix $A\in\beta_{n,\,u}\left(B_{n}\right)$ has 1 in the $\left(1,\,1\right)$-entry.

However, to avoid clutter, we do not prove this result in this paper.
Our proof relies on a general {*}-diagonalization of the Burau matrices
as outlined in \citep{lee2024salters}. For a simple case of $B_{4}$,
the proof hinges on analyzing the following functional equation:
\begin{align*}
f_{1}\overline{f_{1}}+\left(t^{-1}+t\right)f_{2}\overline{f_{2}}+\left(t^{-1}+t\right)\left(t^{-1}+1+t\right)f_{3}\overline{f_{3}} & =\left(t^{-1}+1+t\right)^{2},\:f_{1},\,f_{2},\,f_{3}\in\mathbb{Z}\left[t,\,t^{-1}\right],
\end{align*}
where $f_{1},\,f_{2},\,f_{3}$ are some linear combinations of the
entries in the first row of a Burau matrix $A\in\beta_{4,\,u}\left(B_{4}\right)$.
If a matrix $A$ has 1 in the $\left(1,\,1\right)$-entry, then by
the definition of $f_{1}$, we have $f_{1}\overline{f_{1}}=\left(t^{-1}+1+t\right)^{2}$.
This reduces the equation to
\begin{align*}
f_{2}\overline{f_{2}}+\left(t^{-1}+1+t\right)f_{3}\overline{f_{3}} & =0,\,f_{2},\,f_{3}\in\mathbb{Z}\left[t,\,t^{-1}\right].
\end{align*}

Evaluating this equation at infinitely many points $\zeta$, satisfying
that $\left(\overline{\zeta}+1+\zeta\right)>0$, on the unit circle
reveals that the only solution is $f_{2}=0=f_{3}$. This forces the
entire first row of $A$ to be trivial.

\noindent \end{remark}

\section{Counterexamples in the Stabilizer}

In this section, we consider the following stabilizer subgroup mentioned
in (2):
\begin{align*}
C_{B_{4}}\left(\sigma_{3}\right) & =\left\langle \sigma_{1},\,\sigma_{3},\,\left(\sigma_{3}\sigma_{2}\sigma_{3}\right)^{2}\right\rangle .
\end{align*}

Recall two braids $b_{1},\,b_{2}$ in $B_{4}$ were defined in Section
1 as
\begin{align*}
b_{1} & =\sigma_{1}\sigma_{3}^{-1},\:b_{2}=\sigma_{1}\sigma_{2}\sigma_{3}\sigma_{1}^{-1}\sigma_{2}^{-1}\sigma_{1}^{-1}.
\end{align*}

Gorin\textendash Lin \citep{MR251712} pointed out that these two
generate a free normal subgroup of $B_{4}$, and the quotient map
$\rho:B_{4}\to\left\langle \sigma_{2},\,\sigma_{3}\right\rangle $
induces a split short exact sequence, so that $B_{4}$ is a semidirect
product $\left\langle b_{1},\,b_{2}\right\rangle \rtimes\left\langle \sigma_{2},\,\sigma_{3}\right\rangle $.
Indeed, the following computation is direct from the braid relations
of $B_{4}$.
\begin{align}
 & \sigma_{2}b_{1}\sigma_{2}^{-1}=b_{2}^{-1}b_{1},\,\sigma_{3}b_{1}\sigma_{3}^{-1}=b_{1},\\
 & \sigma_{2}b_{2}\sigma_{2}^{-1}=b_{2},\,\sigma_{3}b_{2}\sigma_{3}^{-1}=b_{2}b_{1}.
\end{align}

\noindent \begin{lemma}

The subgroup $C_{B_{4}}\left(\sigma_{3}\right)$ is a semidirect product
$\left\langle b_{1},\,b_{2}b_{1}b_{2}^{-1}\right\rangle \rtimes\left\langle \sigma_{3},\,\left(\sigma_{3}\sigma_{2}\sigma_{3}\right)^{2}\right\rangle $
via the Gorin\textendash Lin quotient $\rho$.

\noindent \end{lemma}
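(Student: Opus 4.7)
The plan is to leverage the Gorin\textendash Lin semidirect decomposition $B_{4}=\left\langle b_{1},\,b_{2}\right\rangle \rtimes\left\langle \sigma_{2},\,\sigma_{3}\right\rangle $. Every $g\in B_{4}$ admits a unique expression $g=fs$ with $f\in\left\langle b_{1},\,b_{2}\right\rangle $ and $s\in\left\langle \sigma_{2},\,\sigma_{3}\right\rangle $; since $\sigma_{3}$-conjugation preserves each factor, $g\in C_{B_{4}}\left(\sigma_{3}\right)$ if and only if $f$ lies in the fixed subgroup $K:=\mathrm{Fix}\left(\phi_{3}\right)$, where $\phi_{3}$ denotes conjugation by $\sigma_{3}$ restricted to $\left\langle b_{1},\,b_{2}\right\rangle $, and $s\in C_{\left\langle \sigma_{2},\,\sigma_{3}\right\rangle }\left(\sigma_{3}\right)$. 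This reduces the lemma to two independent identifications; the splitting via $\rho$ is then automatic because $\rho$ restricts to the identity on $\left\langle \sigma_{2},\,\sigma_{3}\right\rangle $.

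For the $s$-side, identifying $\left\langle \sigma_{2},\,\sigma_{3}\right\rangle \cong B_{3}$ and using that $B_{3}/Z\left(B_{3}\right)\cong\mathbb{Z}/2*\mathbb{Z}/3$ has cyclic centralizers for non-identity elements, one obtains $C_{\left\langle \sigma_{2},\,\sigma_{3}\right\rangle }\left(\sigma_{3}\right)=\left\langle \sigma_{3},\,\left(\sigma_{3}\sigma_{2}\sigma_{3}\right)^{2}\right\rangle $, using that $\left(\sigma_{3}\sigma_{2}\sigma_{3}\right)^{2}$ generates $Z\left(B_{3}\right)$. On the $f$-side, the inclusion $H:=\left\langle b_{1},\,b_{2}b_{1}b_{2}^{-1}\right\rangle \subseteq K$ follows directly from (4)\textendash (5): $\phi_{3}\left(b_{1}\right)=b_{1}$ and $\phi_{3}\left(b_{2}b_{1}b_{2}^{-1}\right)=\left(b_{2}b_{1}\right)b_{1}\left(b_{2}b_{1}\right)^{-1}=b_{2}b_{1}b_{2}^{-1}$.

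For the reverse inclusion $K\subseteq H$, I would rewrite the generating set in (2) via $\sigma_{1}=b_{1}\sigma_{3}$ to obtain $C_{B_{4}}\left(\sigma_{3}\right)=\left\langle b_{1},\,\sigma_{3},\,\left(\sigma_{3}\sigma_{2}\sigma_{3}\right)^{2}\right\rangle $, and then show that $H$ is normalized by both $\sigma_{3}$ (trivially, by the preceding paragraph) and by $\left(\sigma_{3}\sigma_{2}\sigma_{3}\right)^{2}$. The latter reduces, via iterated applications of (4)\textendash (5), to the two explicit identities $\left(\sigma_{3}\sigma_{2}\sigma_{3}\right)^{2}b_{1}\left(\sigma_{3}\sigma_{2}\sigma_{3}\right)^{-2}=b_{1}^{-1}\left(b_{2}b_{1}b_{2}^{-1}\right)^{-1}b_{1}$ and $\left(\sigma_{3}\sigma_{2}\sigma_{3}\right)^{2}\left(b_{2}b_{1}b_{2}^{-1}\right)\left(\sigma_{3}\sigma_{2}\sigma_{3}\right)^{-2}=b_{1}^{-1}$, both manifestly in $H$. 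With the normalization in hand, $H\cdot\left\langle \sigma_{3},\,\left(\sigma_{3}\sigma_{2}\sigma_{3}\right)^{2}\right\rangle $ forms a subgroup of $B_{4}$ containing all three generators of $C_{B_{4}}\left(\sigma_{3}\right)$, hence equals $C_{B_{4}}\left(\sigma_{3}\right)$; uniqueness of the Gorin\textendash Lin decomposition then forces the $f$-part of every element of $C_{B_{4}}\left(\sigma_{3}\right)$ to lie in $H$, completing $K\subseteq H$.

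I expect the main obstacle to be the bookkeeping in the normalization computation. Applied to $b_{2}b_{1}b_{2}^{-1}$, a single iterate of the composition $\phi_{3}\phi_{2}\phi_{3}$ outputs $b_{2}^{-1}$, which lies outside $H$; only after the second iterate does one recover the element $b_{1}^{-1}\in H$. Tracking this delicate cancellation carefully is the essential technical content of the proof.
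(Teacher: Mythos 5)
Your proposal is correct and follows essentially the same route as the paper: both rest on the generating set (2), the Gorin--Lin splitting, and the conjugation formulas (3)--(6), with your two displayed identities being exactly the paper's (5) and (6) (your extra computation $\left(\sigma_{3}\sigma_{2}\sigma_{3}\right)^{2}\left(b_{2}b_{1}b_{2}^{-1}\right)\left(\sigma_{3}\sigma_{2}\sigma_{3}\right)^{-2}=b_{1}^{-1}$ also checks out). The only cosmetic difference is that the paper closes the argument via Reidemeister--Schreier and the recursion (7) on the conjugates $\left(\sigma_{3}\sigma_{2}\sigma_{3}\right)^{2n}b_{1}\left(\sigma_{3}\sigma_{2}\sigma_{3}\right)^{-2n}$, whereas you verify directly that $\left\langle b_{1},\,b_{2}b_{1}b_{2}^{-1}\right\rangle$ is normalized by $\left(\sigma_{3}\sigma_{2}\sigma_{3}\right)^{2}$ --- just make explicit that your two computed images generate the whole subgroup, so the conjugation is onto and not merely into it.
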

\begin{proof}

Using the generating set (2), the image of $C_{B_{4}}\left(\sigma_{3}\right)$
under $\rho$ is $\left\langle \sigma_{3},\,\left(\sigma_{3}\sigma_{2}\sigma_{3}\right)^{2}\right\rangle $.
Since this group is isomorphic to $\mathbb{Z}^{2}$, the Reidemeister\textendash Schreier
process yields the generating set of the kernel
\begin{align*}
\left\langle \left(\sigma_{3}\sigma_{2}\sigma_{3}\right)^{2n}b_{1}\left(\sigma_{3}\sigma_{2}\sigma_{3}\right)^{-2n},\;n\in\mathbb{Z}\right\rangle ,
\end{align*}
where $\sigma_{3}$ commutes with $b_{1}$.

From equations (3) and (4), we derive the first two elements, distinct
from $b_{1}$, as follows:
\begin{align}
 & \left(\sigma_{3}\sigma_{2}\sigma_{3}\right)^{-2}b_{1}\left(\sigma_{3}\sigma_{2}\sigma_{3}\right)^{2}=b_{2}b_{1}^{-1}b_{2}^{-1},\\
 & \left(\sigma_{3}\sigma_{2}\sigma_{3}\right)^{2}b_{1}\left(\sigma_{3}\sigma_{2}\sigma_{3}\right)^{-2}=b_{1}^{-1}b_{2}b_{1}^{-1}b_{2}^{-1}b_{1}.
\end{align}

By combining equations (5) and (6), we obtain the following relation:
\begin{align}
\left(\sigma_{3}\sigma_{2}\sigma_{3}\right)^{2}b_{1}\left(\sigma_{3}\sigma_{2}\sigma_{3}\right)^{-2} & =b_{1}^{-1}\left(\left(\sigma_{3}\sigma_{2}\sigma_{3}\right)^{-2}b_{1}\left(\sigma_{3}\sigma_{2}\sigma_{3}\right)^{2}\right)b_{1}.
\end{align}

This relation (7) enables us to inductively express all other generators
as an appropriate product of $b_{1}$ and $b_{2}b_{1}b_{2}^{-1}$.
$\qedhere$

\noindent \end{proof}

Now we introduce a {*}-diagonazliation of the unitarity (1) for the
representation $\beta_{4,\,r}$. Define a matrix $M$ in $\mathrm{GL}\left(3,\,\mathbb{Z}\left[t,\,t^{-1},\,\left(1+t\right)^{-1}\right]\right)$
by
\begin{align*}
M & :=\left(\begin{array}{ccc}
0 & 1 & 0\\
1+t^{-1} & -t^{-1} & 0\\
0 & -t^{-1} & t^{-2}\left(1+t\right)
\end{array}\right).
\end{align*}

By conjugating $M$ to the images of generating braids, we have the
following matrices.
\begin{align*}
s_{1}:= & M\beta_{4,\,r}\left(\sigma_{1}\right)M^{-1}=\left(\begin{array}{ccc}
1 & 0 & 0\\
0 & -t & 0\\
0 & 0 & 1
\end{array}\right),\\
s_{2}:= & M\beta_{4,\,r}\left(\sigma_{2}\right)M^{-1}=\left(\begin{array}{ccc}
\frac{t\left(1-t\right)}{1+t} & \frac{t^{2}}{1+t} & \frac{t^{2}}{1+t}\\
\frac{1+t^{2}}{t\left(1+t\right)} & \frac{1}{1+t} & \frac{-t}{1+t}\\
\frac{1+t^{2}}{t\left(1+t\right)} & \frac{-t}{1+t} & \frac{1}{1+t}
\end{array}\right),\\
s_{3}:= & M\beta_{4,\,r}\left(\sigma_{3}\right)M^{-1}=\left(\begin{array}{ccc}
1 & 0 & 0\\
0 & 1 & 0\\
0 & 0 & -t
\end{array}\right).
\end{align*}

Define a diagonal matrix
\begin{align*}
D & :=\left(\begin{array}{ccc}
1 & 0 & 0\\
0 & t+t^{-1} & 0\\
0 & 0 & t+t^{-1}
\end{array}\right).
\end{align*}

By direct computation, for each matrix $A\in M\beta_{4,\,r}\left(B_{4}\right)M^{-1}$,
we have a unitarity relation:

\begin{equation}
\overline{A}DA^{T}=D.
\end{equation}

As can be seen in the case of $s_{2}$, the entire group $M\beta_{4,\,r}\left(B_{4}\right)M^{-1}$
has elements with $\left(1+t\right)$ in the denominator. This is
an obstacle when analyzing the structure of the unitary group. To
eliminate this, we introduce the following definition.

\noindent \begin{definition}

The \emph{tame subgroup} $\mathcal{T}\subset B_{4}$ is a set of braids
$b$ such that $M\beta_{4,\,r}\left(b\right)M^{-1}$ belongs to $\mathrm{GL}\left(3,\,\mathbb{Z}\left[t,\,t^{-1}\right]\right)$.

\noindent \end{definition}

For example, the braids $\sigma_{1}$ and $\sigma_{3}$ belong to
$\mathcal{T}$, while $\sigma_{2}$ does not. On the other hand, we
ask whether an element belonging to $\mathcal{T}$ remains in $\mathcal{T}$
after the conjugation by any braids. This is not always the case;
a direct computation yields $\sigma_{2}\sigma_{1}\sigma_{2}^{-1}\notin\mathcal{T}$.
This motivates the introduction of a new definition.

\noindent \begin{definition}

The \emph{stable subgroup} $\mathcal{S}\subset\mathcal{T}$ is a set
of braids $b$ such that $\sigma_{2}b\sigma_{2}^{-1}\in\mathcal{T}$.

\noindent \end{definition}

The subgroup $\mathcal{S}$ includes nontrivial examples; we have
\begin{align}
\left(s_{3}s_{2}s_{3}\right)^{2} & =\left(\begin{array}{ccc}
t\left(1-t+t^{2}\right) & t^{2}\left(1-t\right) & 0\\
\left(1-t\right)\left(t^{-1}+t\right) & 1-t+t^{2} & 0\\
0 & 0 & t^{3}
\end{array}\right),
\end{align}
which implies $\left(\sigma_{3}\sigma_{2}\sigma_{3}\right)^{2}\in\mathcal{S}$.
Moreover, define two matrices
\begin{align*}
S' & :=MSM^{-1}=M\beta_{4,\,r}\left(b_{1}\right)M^{-1},\\
T' & :=MTM^{-1}=M\beta_{4,\,r}\left(b_{2}\right)M^{-1}.
\end{align*}
\begin{lemma}

The tame subgroup $\mathcal{T}$ is $\left\langle b_{1},\,b_{2}\right\rangle \rtimes\left\langle \sigma_{3},\,\left(\sigma_{3}\sigma_{2}\sigma_{3}\right)^{2}\right\rangle $.
Moreover, the stable subgroup $\mathcal{S}$ is $\left\langle b_{1},\,b_{2}\right\rangle \rtimes\left\langle \left(\sigma_{3}\sigma_{2}\sigma_{3}\right)^{2}\right\rangle $.

\noindent \end{lemma}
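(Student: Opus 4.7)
The plan is to prove both statements by establishing containment in each direction, with the reverse inclusion being the main obstacle. First, $\mathcal{T}$ is itself a subgroup of $B_{4}$, being the preimage of $\mathrm{GL}(3,\,\mathbb{Z}[t,\,t^{-1}])$ under the homomorphism $b\mapsto M\beta_{4,\,r}(b)M^{-1}$ landing in $\mathrm{GL}(3,\,\mathbb{Z}[t,\,t^{-1},\,(1+t)^{-1}])$. For the inclusion $\langle b_{1},\,b_{2}\rangle\rtimes\langle\sigma_{3},\,(\sigma_{3}\sigma_{2}\sigma_{3})^{2}\rangle\subset\mathcal{T}$, I verify integrality on generators: the matrices $s_{1}$ and $s_{3}$ are manifestly integer; identity (9) shows $M\beta_{4,\,r}((\sigma_{3}\sigma_{2}\sigma_{3})^{2})M^{-1}$ is integer; $S'=s_{1}s_{3}^{-1}$ is integer as a product of integer matrices; and for $T'=M\beta_{4,\,r}(b_{2})M^{-1}$, although $b_{2}$ involves $\sigma_{2}$, a direct multiplication confirms that the $(1+t)$-denominators from $s_{2}$ cancel across the word. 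Since $\langle b_{1},\,b_{2}\rangle$ is normal in $B_{4}$ by Gorin--Lin, the internal product $\langle b_{1},\,b_{2}\rangle\cdot\langle\sigma_{3},\,(\sigma_{3}\sigma_{2}\sigma_{3})^{2}\rangle$ is a semidirect product contained in $\mathcal{T}$.

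For the reverse, take $b\in\mathcal{T}$ and apply the Gorin--Lin decomposition to write $b=w\sigma$ uniquely with $w\in\langle b_{1},\,b_{2}\rangle$ and $\sigma\in\langle\sigma_{2},\,\sigma_{3}\rangle$. Since $w\in\mathcal{T}$ by the previous step, $\sigma=w^{-1}b\in\mathcal{T}$. The task reduces to: any $\sigma\in\langle\sigma_{2},\,\sigma_{3}\rangle\cap\mathcal{T}$ lies in $\langle\sigma_{3},\,(\sigma_{3}\sigma_{2}\sigma_{3})^{2}\rangle$. To handle this, I pass to the Bruhat--Tits building $X$ of $\mathrm{GL}(3,\,\mathbb{Q}(t))$ with respect to the $(1+t)$-adic valuation. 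Let $R=\mathbb{Z}[t,\,t^{-1}]_{(1+t)}$ be the DVR with uniformizer $1+t$, $K=\mathbb{Q}(t)$, and $v_{0}=[R^{3}]$ the standard vertex. A determinant argument using $\det s(\sigma)=(-t)^{k}\in R^{\times}$ shows that $s(\sigma)$ stabilizes $v_{0}$ if and only if $s(\sigma)\in\mathrm{GL}(3,\,R)$, which together with $s(\sigma)\in\mathrm{GL}(3,\,\mathbb{Z}[t,\,t^{-1},\,(1+t)^{-1}])$ forces $s(\sigma)\in\mathrm{GL}(3,\,\mathbb{Z}[t,\,t^{-1}])$, i.e., $\sigma\in\mathcal{T}$. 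It therefore suffices to identify the $s(\langle\sigma_{2},\,\sigma_{3}\rangle)$-stabilizer of $v_{0}$.

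The main obstacle lies in this identification. The containment $\langle\sigma_{3},\,(\sigma_{3}\sigma_{2}\sigma_{3})^{2}\rangle\subset\mathrm{Stab}(v_{0})$ is clear. For the reverse, observe that $(1+t)s_{2}$ reduces modulo $(1+t)$ to the rank-one matrix with each row equal to $(-2,\,1,\,1)$, so $s_{2}\cdot v_{0}=v_{1}$ is adjacent to but distinct from $v_{0}$. Quotienting $\langle\sigma_{2},\,\sigma_{3}\rangle$ by its center $\langle(\sigma_{3}\sigma_{2}\sigma_{3})^{2}\rangle$ gives $\mathrm{PSL}(2,\,\mathbb{Z})\cong\langle a,\,b\mid a^{2}=b^{3}=1\rangle$ with $a=\overline{\sigma_{3}\sigma_{2}\sigma_{3}}$ and $b=\overline{\sigma_{2}\sigma_{3}}$, whereupon $\bar\sigma_{2}=b^{2}a$ and $\bar\sigma_{3}=ab^{2}$; the image of $\langle\sigma_{3},\,(\sigma_{3}\sigma_{2}\sigma_{3})^{2}\rangle$ is the cyclic parabolic subgroup $\langle ab^{2}\rangle$. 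Exploiting the Bass--Serre tree for the free-product decomposition of $\mathrm{PSL}(2,\,\mathbb{Z})$ and the visible geometry of $v_{0},\,v_{1}$ in $X$, I would show that any element whose lift fixes $v_{0}$ must lie in $\langle ab^{2}\rangle$; this step, where one must rule out all non-parabolic cosets from the stabilizer, is the technical heart of the argument.

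The statement for $\mathcal{S}$ then follows. For $b=w\sigma\in\mathcal{T}$ with $w\in\langle b_{1},\,b_{2}\rangle$ and $\sigma\in\langle\sigma_{3},\,(\sigma_{3}\sigma_{2}\sigma_{3})^{2}\rangle$, normality of $\langle b_{1},\,b_{2}\rangle$ reduces $\sigma_{2}b\sigma_{2}^{-1}\in\mathcal{T}$ to $\sigma_{2}\sigma\sigma_{2}^{-1}\in\mathcal{T}$. Writing $\sigma=\sigma_{3}^{k}(\sigma_{3}\sigma_{2}\sigma_{3})^{2\ell}$ and invoking the characterization of $\mathcal{T}$ just established, this requires $\sigma_{2}\sigma_{3}^{k}\sigma_{2}^{-1}\in\langle\sigma_{3},\,(\sigma_{3}\sigma_{2}\sigma_{3})^{2}\rangle$. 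A direct normal-form computation in $\mathrm{PSL}(2,\,\mathbb{Z})$ gives $\bar\sigma_{2}\bar\sigma_{3}\bar\sigma_{2}^{-1}=bab$, and $(bab)^{k}\notin\langle ab^{2}\rangle$ for $k\ne 0$ by uniqueness of the free-product normal form, forcing $k=0$. Combined with $\langle b_{1},\,b_{2}\rangle\subset\mathcal{S}$ (immediate from equations (3) and (4), which show $\sigma_{2}b_{i}\sigma_{2}^{-1}\in\langle b_{1},\,b_{2}\rangle\subset\mathcal{T}$), we conclude $\mathcal{S}=\langle b_{1},\,b_{2}\rangle\rtimes\langle(\sigma_{3}\sigma_{2}\sigma_{3})^{2}\rangle$.
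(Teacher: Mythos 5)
Your reduction is structurally sound and close to the paper's: both arguments use the Gorin--Lin semidirect product to reduce the problem to deciding which elements of $\left\langle \sigma_{2},\,\sigma_{3}\right\rangle $ are tame, and your reformulation of tameness as membership in $\mathrm{GL}\left(3,\,R\right)$ for the $\left(1+t\right)$-adic localization $R$ (equivalently, stabilizing the standard vertex $v_{0}$ of the associated building) is legitimate, since $\mathbb{Z}\left[t,\,t^{-1},\,\left(1+t\right)^{-1}\right]\cap R=\mathbb{Z}\left[t,\,t^{-1}\right]$ and the determinants $\left(-t\right)^{k}$ are units in $R$. Your derivation of the $\mathcal{S}$ statement from the $\mathcal{T}$ statement --- reducing to $\sigma_{2}\sigma_{3}^{k}\sigma_{2}^{-1}$ and showing $\left(bab\right)^{k}\notin\left\langle ab^{2}\right\rangle $ for $k\ne0$ via the free-product normal form --- is also correct.

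However, there is a genuine gap exactly where the content of the lemma lies: you never prove that the stabilizer of $v_{0}$ in the image of $\left\langle \sigma_{2},\,\sigma_{3}\right\rangle $ is no larger than the image of $\left\langle \sigma_{3},\,\left(\sigma_{3}\sigma_{2}\sigma_{3}\right)^{2}\right\rangle $, i.e., that every word outside that subgroup acquires a genuine $\left(1+t\right)$-denominator. You explicitly label this ``the technical heart'' and defer it, gesturing at ``the Bass--Serre tree for the free-product decomposition'' together with ``the visible geometry of $v_{0},\,v_{1}$ in $X$''; but the Bass--Serre tree of $C_{2}*C_{3}$ and the two-dimensional building $X$ are different objects, and no mechanism relating the two is supplied. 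The paper closes precisely this step by induction on the syllable length of the normal form $\left(\sigma_{3}\sigma_{2}\sigma_{3}\right)^{i_{1}}\left(\sigma_{3}\sigma_{2}\right)^{j_{1}}\cdots$ in $C_{2}*C_{3}$ --- in effect a ping-pong argument showing each additional syllable strictly deepens the $\left(1+t\right)$-denominator --- and some argument of this kind is unavoidable and absent from your write-up. A secondary error: $s_{2}\cdot v_{0}$ is not adjacent to $v_{0}$. Since $\left(1+t\right)s_{2}$ has rank $1$ modulo $\left(1+t\right)$ and determinant $-t\left(1+t\right)^{3}$, the invariant factors of $s_{2}$ over $R$ are $\left(1+t\right)^{-1},\,1,\,\left(1+t\right)$, so $\left[s_{2}v_{0}\right]$ lies at distance $2$ from $v_{0}$. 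This does not affect the (true) conclusion $\sigma_{2}\notin\mathcal{T}$, but it indicates that the geometric step you deferred has not been tested against an actual computation.
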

\begin{proof}

\noindent A direct computation yields
\begin{align}
S' & =\left(\begin{array}{ccc}
1 & 0 & 0\\
0 & -t & 0\\
0 & 0 & -t^{-1}
\end{array}\right),\;T':=\left(\begin{array}{ccc}
-t^{-1}+1-t & 0 & t\left(1-t\right)\\
\left(t^{-1}-1\right)\left(t^{-1}+t\right) & 0 & -1+t-t^{2}\\
0 & -t^{-1} & 0
\end{array}\right).
\end{align}

Since $\left\langle b_{1},\,b_{2}\right\rangle $ is normal in $B_{4}$,
we see that $\mathcal{S}$ contains $\left\langle b_{1},\,b_{2}\right\rangle $.
By the semidirect product structure on $B_{4}$, it suffices to check
which of the braids in $\left\langle \sigma_{2},\,\sigma_{3}\right\rangle $
belong to $\mathcal{T}$ or $\mathcal{S}$. As the central quotient
of $B_{3}$ is isomorphic to the modular group $\mathrm{PSL}\left(2,\,\mathbb{Z}\right)\cong C_{2}*C_{3}$,
where $C_{n}$ is the cyclic group of order $n$, and as the generator
$\left(\sigma_{3}\sigma_{2}\sigma_{3}\right)^{2}$ of the center $Z\left(\left\langle \sigma_{2},\,\sigma_{3}\right\rangle \right)$
belongs to $\mathcal{S}$ as in (9), we only need to examine the elements
\begin{align*}
\left(\sigma_{3}\sigma_{2}\sigma_{3}\right)^{i_{1}}\left(\sigma_{3}\sigma_{2}\right)^{j_{1}}\left(\sigma_{3}\sigma_{2}\sigma_{3}\right)^{i_{2}}\cdots\left(\sigma_{3}\sigma_{2}\right)^{j_{n}},
\end{align*}
according to the normal form of the free product. The proof is completed
by using induction based on the length $n$. $\qedhere$

\noindent \end{proof}

In conclusion, we observe that many braids have integer coefficients
even in the {*}-diagonalized representation. Specifically, combining
Lemma 3.1 and Lemma 3.4, we conclude that the centralizer $C_{B_{4}}\left(\sigma_{3}\right)$
is contained in the tame subgroup $\mathcal{T}$.

Let us momentarily set aside the counterexamples and focus on the
faithfulness problem in the centralizer. For the usual Burau representation
$\beta_{4,\,r}$, since $\beta_{4,\,r}$ commutes with the Gorin\textendash Lin
quotient $\rho$ \citep{MR0375281}, it suffices to show that $\left\langle S',\,T'S'T'^{-1}\right\rangle $
is free of rank 2. This result was established by Moran in \citep{MR1113420},
but he used the order on the real field $\mathbb{R}$, which creates
challenges when extending the result to the mod $p$ Burau, where
the coefficient field has positive characteristic. Moreover, in general,
the Gorin\textendash Lin quotient $\rho$ does not necessarily commute
with $\beta_{4,\,r}$ modulo $p$ (for instance, see the counterexamples
in \citep{MR1431138}).

Therefore, we will rely here on the structure theory of $2\times2$
similitude groups over the Laurent polynomial ring over general fields,
as developed in \citep{lee2024buraurepresentationb3modulo}. In brief,
we consider the following groups and their generators.

\noindent \begin{definition}

Let $\mathbb{F}$ be a field. Define a diagonal matrix $D_{2}$ in
$\mathrm{PGL}\left(2,\,\mathbb{F}\left(t\right)\right)$ as

\[
D_{2}:=\left[\begin{array}{cc}
1 & 0\\
0 & t^{-1}+t
\end{array}\right].
\]

For simplicity, put $\Phi=t^{-1}+t$. The \emph{projective similitude
group} (with respect to $D_{2}$) $\mathrm{PS}\left(\mathbb{F}\right)$
is defined to be
\begin{align*}
\left\{ A\in\mathrm{PGL}\left(2,\,\mathbb{F}\left[t,\,t^{-1}\right]\right)\::\:\overline{A}D_{2}A^{T}=kD_{2}\,\mathrm{where}\;k\in\mathbb{F}^{\times}\right\} .
\end{align*}

On the other hand, the \emph{quaternionic group} $\mathrm{Q}\left(\mathbb{F}\right)$
is defined to be
\begin{align*}
\left\{ A\in\mathrm{PGL}\left(2,\,\mathbb{F}\left[t,\,t^{-1}\right]\right)\::\:A=\left[\begin{array}{cc}
g_{1} & g_{2}\\
-\Phi\overline{g_{2}} & \overline{g_{1}}
\end{array}\right],\,\mathrm{where}\;g_{1},\,g_{2}\in\mathbb{F}\left[t,\,t^{-1}\right]\right\} .
\end{align*}

The \emph{elementary generators} for $\mathrm{Q}\left(\mathbb{F}\right)$
are defined to be
\[
g_{\mathbb{F}}\left[r\right]\,:=\,\left[\begin{array}{cc}
t-r^{2} & r\\
-r\Phi & t^{-1}-r^{2}
\end{array}\right],
\]
for any $r\in\mathbb{F}$ such that $r^{4}\ne-1$.

On the other hand, suppose that $\mathbb{F}$ has characteristic 2.
For a polynomial $f\!\left[x\right]\in\mathbb{F}\left[x\right]$,
the \emph{upper additive generator} for $f$ is defined to be
\[
au_{\mathbb{F}}\left[f\right]\,:=\,\left[\begin{array}{cc}
1+\left(t^{-1}+t\right)f\!\left[t^{-1}+t\right] & \left(t^{-1}+1\right)f\!\left[t^{-1}+t\right]\\
\left(1+t\right)\Phi f\!\left[t^{-1}+t\right] & 1+\left(t^{-1}+t\right)f\!\left[t^{-1}+t\right]
\end{array}\right],
\]

while the \emph{lower additive generator} for $f$ is defined to be
\[
al_{\mathbb{F}}\left[f\right]\,:=\,\left[\begin{array}{cc}
1+\left(t^{-1}+t\right)f\!\left[t^{-1}+t\right] & \left(1+t\right)f\!\left[t^{-1}+t\right]\\
\left(t^{-1}+1\right)\Phi f\!\left[t^{-1}+t\right] & 1+\left(t^{-1}+t\right)f\!\left[t^{-1}+t\right]
\end{array}\right].
\]

\noindent \end{definition}
\begin{theorem}

For a field $\mathbb{F}$, the projective similitude group $\mathrm{PS}\left(\mathbb{F}\right)$
has the quaternionic group $\mathrm{Q}\left(\mathbb{F}\right)$ as
an index 4 or 2 subgroup with the coset representatives:

\[
\left[\begin{array}{cc}
1 & 0\\
0 & 1
\end{array}\right],\:\left[\begin{array}{cc}
1 & 0\\
0 & -1
\end{array}\right],\:\left[\begin{array}{cc}
1 & 0\\
0 & t
\end{array}\right],\:\left[\begin{array}{cc}
1 & 0\\
0 & -t
\end{array}\right].
\]

The quaternionic group $\mathrm{Q}\left(\mathbb{F}\right)$ is the
free product of groups $\left\{ \mathcal{G}_{\mathbb{F},\,r}\,:\,r\in\mathbb{F}\right\} $.
When $r\in\mathbb{F}$ satisfies $r^{4}\ne-1$, the group $\mathcal{G}_{\mathbb{F},\,r}$
is the infinite cyclic group generated by the corresponding elementary
generator $g_{\mathbb{F}}\left[r\right]$. In particular, if the equation
$x^{4}=-1$ has no solution over $\mathbb{F}$, the group $\mathrm{Q}\left(\mathbb{F}\right)$
is free of rank $\left|\mathbb{F}\right|$.

When $\mathbb{F}$ has characteristic 2, the element $1$ is the only
solution for the equation $x^{4}=-1$ with multiplicity 4. In this
case, the group $\mathcal{G}_{\mathbb{F},\,1}$ is isomorphic to the
free product of two infinitely generated abelian groups $\mathbb{F}\left[x\right]*\mathbb{F}\left[x\right]$,
where $\mathbb{F}\left[x\right]$ is the additive group of the polynomial
ring. The isomorphism is given by $f\mapsto au_{\mathbb{F}}\left[f\right]$
for the left factor and $f\mapsto al_{\mathbb{F}}\left[f\right]$
for the right factor. In particular, when $\mathbb{F}$ is the finite
field $\mathbb{F}_{2}$, then $\mathrm{Q}\left(\mathbb{F}\right)$
is isomorphic to $\mathbb{Z}*\mathbb{F}_{2}\left[x\right]*\mathbb{F}_{2}\left[x\right]$.

\noindent \end{theorem}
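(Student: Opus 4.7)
The plan is to split the argument into (i) the index computation $[\mathrm{PS}(\mathbb{F}) : \mathrm{Q}(\mathbb{F})]$ and (ii) the free product decomposition of $\mathrm{Q}(\mathbb{F})$ via a tree action. For (i), the first step is to show that for any $A \in \mathrm{PS}(\mathbb{F})$ the similitude factor $k$ from $\overline{A} D_2 A^T = k D_2$ lies in $\mathbb{F}^\times$: since $A$ has entries in $\mathbb{F}[t,t^{-1}]$, the factor $k$ is a unit there, and transposing the defining relation together with $\overline{D_2} = D_2$ forces $\overline{k} = k$, so $k \in \mathbb{F}[t,t^{-1}]^\times \cap \mathbb{F}[\Phi] = \mathbb{F}^\times$. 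A direct matrix computation then shows that the four listed diagonal matrices lie in distinct cosets of $\mathrm{Q}(\mathbb{F})$ in $\mathrm{PS}(\mathbb{F})$ (giving index $4$) but collapse in pairs when $\mathrm{char}(\mathbb{F}) = 2$, yielding index $2$; the four cosets exhaust $\mathrm{PS}(\mathbb{F})$ because the bottom row of any $A \in \mathrm{PS}(\mathbb{F})$ can be normalized by one of these diagonal factors to match the quaternionic shape $(-\Phi \overline{g_2},\, \overline{g_1})$ relative to the top row $(g_1,\, g_2)$.

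For (ii), the natural geometric tool is Bass-Serre theory applied to an appropriate Bruhat-Tits tree. The involution $t \mapsto t^{-1}$ fixes the polynomial subring $R_0 := \mathbb{F}[\Phi]$, over which $R = \mathbb{F}[t,t^{-1}]$ is free of rank $2$ with basis $\{1,t\}$, and $t$ is a root of $x^2 - \Phi x + 1$. The quaternionic form $\overline{A} D_2 A^T = D_2$ identifies $\mathrm{Q}(\mathbb{F})$ with the unitary group of a rank-$2$ Hermitian form over $R_0$, which in turn embeds into $\mathrm{PGL}_2$ of a suitable completion of $\mathbb{F}(\Phi)$ and acts on the associated Bruhat-Tits tree $\mathcal{X}$. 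The plan is to exhibit a fundamental domain for this action equal to a central vertex $v_0$ with trivial stabilizer and, for each $r \in \mathbb{F}$, one edge $e_r$ from $v_0$ to an outer vertex $v_r$ whose stabilizer is generated by $g_{\mathbb{F}}[r]$. Bass-Serre theory then yields $\mathrm{Q}(\mathbb{F}) \cong \ast_{r \in \mathbb{F}} \mathcal{G}_{\mathbb{F}, r}$, and for $r^4 \ne -1$ a direct power computation (tracking the growing degrees of the entries of $g_{\mathbb{F}}[r]^n$ in $t$ and $t^{-1}$) shows that $g_{\mathbb{F}}[r]^n$ is never trivial, so $\mathcal{G}_{\mathbb{F}, r} \cong \mathbb{Z}$.

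The main obstacle is the characteristic-$2$ case at $r = 1$, where $r^4 = -1$ degenerates so that $g_{\mathbb{F}}[1]$ fixes $v_1$ with residual symmetry and the vertex stabilizer enlarges beyond $\langle g_{\mathbb{F}}[1]\rangle$. I would verify directly that both families of additive generators $\{au_{\mathbb{F}}[f]\}_{f \in \mathbb{F}[x]}$ and $\{al_{\mathbb{F}}[f]\}_{f \in \mathbb{F}[x]}$ fix $v_1$, that each gives an injective additive-to-multiplicative homomorphism $(\mathbb{F}[x], +) \hookrightarrow \mathrm{Q}(\mathbb{F})$ via the matrix identities $au[f_1] \cdot au[f_2] = au[f_1 + f_2]$ and $al[f_1] \cdot al[f_2] = al[f_1 + f_2]$ (with the cross terms vanishing thanks to characteristic $2$), and that these two families together generate the enlarged vertex stabilizer. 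A ping-pong argument on the two subtrees of $\mathcal{X}$ separated by the edge-neighborhood of $v_1$, showing that $au$-elements push one subtree into the other while $al$-elements do the reverse, establishes $\mathcal{G}_{\mathbb{F}, 1} \cong \mathbb{F}[x] \ast \mathbb{F}[x]$. The closing statement $\mathrm{Q}(\mathbb{F}_2) \cong \mathbb{Z} \ast \mathbb{F}_2[x] \ast \mathbb{F}_2[x]$ then follows because $\mathbb{F}_2 = \{0, 1\}$ contributes exactly one infinite cyclic factor at $r = 0$ and the anomalous factor at $r = 1$.
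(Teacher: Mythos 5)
The first thing to say is that the paper does not actually prove this theorem: its ``proof'' is a citation to \citep[Theorem 4.9]{lee2024salters} and \citep[Theorem 3.11]{lee2024buraurepresentationb3modulo}, with the remark that only the palindromic polynomial changes from $t^{-1}+1+t$ to $t^{-1}+t$. Your overall strategy --- viewing $\mathrm{Q}\left(\mathbb{F}\right)$ as (the unit group of an order in) a quaternion algebra over $\mathbb{F}\left(\Phi\right)$, letting it act on the Bruhat--Tits tree at the split place, and reading off the free product from Bass--Serre theory, with a ping-pong argument for the degenerate factor in characteristic $2$ --- is the same kind of argument that those references use, so the approach is not off-base. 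Your index computation in part (i) is essentially correct, though the phrase ``normalized by one of these diagonal factors'' hides a small point: solving the unitarity relations gives $c=-\Phi\overline{b}\mu$, $d=\overline{a}\mu$ with $\mu\overline{\mu}=1$, hence $\mu=\pm t^{n}$ for an \emph{arbitrary} integer $n$, and you must then observe that $\mathrm{diag}\left(1,t^{2m}\right)=\mathrm{diag}\left(t^{-m},t^{m}\right)$ is already quaternionic, so that only the parity of $n$ and the sign survive, giving the four (or two) cosets.

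The genuine gap is in part (ii). The statement that there is a fundamental domain consisting of a central vertex $v_{0}$ with trivial stabilizer and one edge $e_{r}$ per $r\in\mathbb{F}$ with vertex stabilizer exactly $\left\langle g_{\mathbb{F}}\left[r\right]\right\rangle$ (resp.\ $\left\langle au_{\mathbb{F}}\left[f\right],\,al_{\mathbb{F}}\left[f\right]\right\rangle$ at $r=1$ in characteristic $2$) is not a preliminary to the theorem --- it \emph{is} the theorem. It simultaneously encodes that the elementary generators generate $\mathrm{Q}\left(\mathbb{F}\right)$, that they satisfy no relations beyond the stated ones, and that the stabilizers are no larger than claimed; your proposal asserts all three without supplying a mechanism. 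What is needed is a reduction argument: given an arbitrary $\left(g_{1},\,g_{2}\right)$ satisfying the quaternionic/unitarity constraint, one multiplies by a suitably chosen $g_{\mathbb{F}}\left[r\right]^{\pm1}$ (or additive generator) to strictly decrease a degree invariant of the entries, terminating at the identity; this Nagao-type division step is where the hypothesis $r^{4}\ne-1$ versus $r^{4}=-1$ actually enters, and it is what produces the additive generators in characteristic $2$ rather than merely accommodating them. The same issue recurs at $v_{1}$: that the two families $au_{\mathbb{F}}$ and $al_{\mathbb{F}}$ \emph{generate} the enlarged stabilizer is asserted, and your ping-pong only addresses the freeness of the product once generation is known. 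Without the reduction step the outline does not close.
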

\begin{proof}

\noindent See \citep[Theorem 4.9]{lee2024salters} and \citep[Theorem 3.11]{lee2024buraurepresentationb3modulo}.
The only difference is that the palindromic polynomial $\Phi$ is
changed from $t^{-1}+1+t$ to $t^{-1}+t$, which makes only minor
mechanical differences to all arguments. $\qedhere$

\noindent \end{proof}

In order to prove the faithfulness, we need more detailed information
about the the whole similitude group. Define two matrices
\begin{align*}
h_{0} & :=\left[\begin{array}{cc}
1 & 0\\
0 & -t
\end{array}\right],\;h_{-1}:=\left[\begin{array}{cc}
1 & 0\\
0 & -1
\end{array}\right].
\end{align*}
\begin{lemma}

Let $\mathbb{F}$ be a field. Then, the projective similitude group
$\mathrm{PS}\left(\mathbb{F}\right)$ is a semidirect product $\left\langle \mathrm{Q}\left(\mathbb{F}\right),\,h_{0}\right\rangle \rtimes\left\langle h_{-1}\right\rangle $.
The element $h_{-1}$ acts by
\begin{align}
h_{-1}h_{0}h_{-1}^{-1} & =h_{0},\:h_{-1}g_{\mathbb{F}}\left[r\right]h_{-1}^{-1}=g_{\mathbb{F}}\left[-r\right],\:r\in\mathbb{F}.
\end{align}

The quaternionic group $\mathrm{Q}\left(\mathbb{F}\right)$ is an
index 2 subgroup of $\left\langle \mathrm{Q}\left(\mathbb{F}\right),\,h_{0}\right\rangle $.
In the group presentation of $\left\langle \mathrm{Q}\left(\mathbb{F}\right),\,h_{0}\right\rangle $,
the generator $h_{0}$ is added, one elementary generator $g_{\mathbb{F}}\left[0\right]$
is deleted by a relation $h_{0}^{2}=g_{\mathbb{F}}\left[0\right]^{-1}$.
On the elementary generators, for each $r\in\mathbb{F}$ such that
$r\ne0$, we add exactly one relation:
\begin{align}
h_{0}g_{\mathbb{F}}\left[r\right]h_{0}^{-1} & =g_{\mathbb{F}}\left[-r^{-1}\right]^{-1}h_{0}^{-2}.
\end{align}

Moreover, suppose $\mathrm{char}\left(\mathbb{F}\right)=2$. For a
polynomial $f\!\left[x\right]\in\mathbb{F}\left[x\right]$, $h_{0}$
acts on the lower additive generators
\begin{equation}
h_{0}al_{\mathbb{F}}\left(f\right)h_{0}^{-1}=au_{\mathbb{F}}\left(f\right).
\end{equation}
\end{lemma}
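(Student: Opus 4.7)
My plan is to deduce Lemma 3.7 directly from Theorem 3.6 together with a few explicit conjugation computations in $\mathrm{PGL}\!\left(2,\mathbb{F}\!\left[t,t^{-1}\right]\right)$. Theorem 3.6 already supplies four coset representatives for $\mathrm{Q}(\mathbb{F})$ in $\mathrm{PS}(\mathbb{F})$, which one readily identifies with $I$, $h_{-1}$, $h_{0}$, $h_{-1}h_{0}$; thus structurally I only need to check that $\langle \mathrm{Q}(\mathbb{F}), h_{0}\rangle$ is a normal index-$2$ subgroup complementary to $\langle h_{-1}\rangle$, and that its internal group structure is captured by the claimed presentation.

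For the outer $\mathbb{Z}/2$ factor, I would first note that $h_{-1}$ and $h_{0}$ commute, since both are diagonal, which yields the first identity in (11). The action $h_{-1} g_{\mathbb{F}}[r] h_{-1}^{-1} = g_{\mathbb{F}}[-r]$ is then immediate from the observation that conjugation by $\mathrm{diag}(1,-1)$ negates the off-diagonal entries, and those entries of $g_{\mathbb{F}}[r]$ are precisely $r$ and $-r\Phi$. That $\langle \mathrm{Q}(\mathbb{F}), h_{0}\rangle \cap \langle h_{-1}\rangle = \{I\}$ follows from the coset list in Theorem 3.6, since $h_{-1}$ sits in a coset distinct from both $\mathrm{Q}(\mathbb{F})$ and $\mathrm{Q}(\mathbb{F})h_{0}$; combined with $h_{-1}^{2} = I$ in $\mathrm{PGL}$ this gives the semidirect product.

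Next I would pin down the presentation of $\langle \mathrm{Q}(\mathbb{F}), h_{0}\rangle$. A direct squaring gives $h_{0}^{2} = \mathrm{diag}(1, t^{2})$, which in $\mathrm{PGL}$ equals $\mathrm{diag}(t^{-1}, t) = g_{\mathbb{F}}[0]^{-1}$; hence $h_{0}$ has order $2$ modulo $\mathrm{Q}(\mathbb{F})$ and can replace $g_{\mathbb{F}}[0]$ as a generator. The relation (12) reduces to a single matrix identity: conjugation by $h_{0} = \mathrm{diag}(1,-t)$ multiplies the upper-right entry of $g_{\mathbb{F}}[r]$ by $-t^{-1}$ and the lower-left entry by $-t$, and one checks by inspection that the result agrees, up to a global scalar, with $g_{\mathbb{F}}[-r^{-1}]^{-1} h_{0}^{-2}$. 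Once these relations are verified in $\mathrm{PGL}$, completeness of the presentation follows by a normal-form argument: any word in $\mathrm{Q}(\mathbb{F}) \cup \{h_{0}\}$ reduces to the canonical coset form $q$ or $qh_{0}$ using the conjugation relation, and any hidden identity between two such normal forms would have to already hold inside $\mathrm{Q}(\mathbb{F})$, where Theorem 3.6 controls the relations via the free product $\mathrm{Q}(\mathbb{F}) = *_{r \in \mathbb{F}} \mathcal{G}_{\mathbb{F},r}$.

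For the characteristic-$2$ exchange (13), I would compute $h_{0}\, al_{\mathbb{F}}(f)\, h_{0}^{-1}$ directly from Definition 3.5: conjugation by $\mathrm{diag}(1,-t)$ scales the upper-right entry $(1+t) f[t^{-1}+t]$ by $-t^{-1}$ and the lower-left entry $(t^{-1}+1)\Phi f[t^{-1}+t]$ by $-t$, so in characteristic $2$ the minus signs disappear while the factors of $t^{\mp 1}$ swap $(1+t)$ with $(t^{-1}+1)$, producing exactly $au_{\mathbb{F}}(f)$. I expect the main obstacle to be not the individual matrix manipulations, which are mechanical, but rather justifying completeness of the presentation; I would address that precisely by the coset/normal-form reduction sketched above, leaning on Theorem 3.6 for all relations internal to $\mathrm{Q}(\mathbb{F})$.
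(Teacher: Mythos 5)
Your proposal is correct and follows essentially the same route as the paper: the paper likewise disposes of relations (11), (12), (13) as direct matrix computations, records $h_{0}^{2}=g_{\mathbb{F}}\left[0\right]^{-1}$ explicitly, and leans on Theorem 3.6 for the structure of $\mathrm{Q}\left(\mathbb{F}\right)$; your added normal-form argument for completeness of the presentation only makes explicit what the paper leaves implicit. Note that, like the paper (which explicitly omits this), you do not treat the extra relations arising for $\mathcal{G}_{\mathbb{F},\,r}$ when $r$ is a multiplicity-one root of $x^{4}=-1$, but this is consistent with the paper's own level of detail.
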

\begin{proof}

\noindent The equations (11), (12) and (13) are direct from matrix
calculations. Moreover, we have
\begin{align*}
h_{0}^{2} & =\left[\begin{array}{cc}
1 & 0\\
0 & t^{2}
\end{array}\right]=\left[\begin{array}{cc}
t^{-1} & 0\\
0 & t
\end{array}\right]=g_{\mathbb{F}}\left[0\right]^{-1}.
\end{align*}

There are additional relations for type groups $\mathcal{G}_{\mathbb{F},\,r}$
such that $r$ is a root with multiplicity 1, but we omit them here
to avoid clutter. $\qedhere$

\noindent \end{proof}
\begin{prooftheorem13}

At first, suppose the field $\mathbb{F}$ is the rational $\mathbb{Q}$
(usual Burau) or the finite field $\mathbb{F}_{p}$ for $p>2$. Then,
the determinant map from the centralizer $\beta_{4,\,r}\left(C_{B_{4}}\left(\sigma_{3}\right)\right)$
to the units $\left\langle \left(-t\right)^{n}\right\rangle \subset\mathbb{F}\left[t,\,t^{-1}\right]^{\times}$
splits, which implies
\[
M\beta_{4,\,r}\left(C_{B_{4}}\left(\sigma_{3}\right)\right)M^{-1}=\left\langle S',\,\left(s_{2}s_{3}s_{2}\right)^{2}s_{3}^{-6}\right\rangle \times\left\langle s_{3}\right\rangle .
\]

From (9) and (10), consider the following map:
\[
\left(\begin{array}{ccc}
a_{11} & a_{12} & 0\\
a_{21} & a_{22} & 0\\
0 & 0 & *
\end{array}\right)\mapsto\left(\begin{array}{cc}
a_{11} & a_{12}\\
a_{21} & a_{22}
\end{array}\right),
\]

\noindent and let $c$ (resp. $d$) be the image of $S'$ (resp. $\left(s_{2}s_{3}s_{2}\right)^{2}s_{3}^{-6}$)
in $\mathrm{GL}\left(2,\,\mathbb{F}\left[t,\,t^{-1}\right]\right)$.
Concretely, we have
\[
c=\left(\begin{array}{cc}
1 & 0\\
0 & -t
\end{array}\right),\:d=\left(\begin{array}{cc}
t\left(1-t+t^{2}\right) & t^{2}\left(1-t\right)\\
\left(1-t\right)\left(t^{-1}+t\right) & 1-t+t^{2}
\end{array}\right).
\]
\begin{claim1}

Suppose $\mathrm{char}\left(\mathbb{F}\right)\ne2$. In $\mathrm{GL}\left(2,\,\mathbb{F}\left[t,\,t^{-1}\right]\right)$,
take the lifts $\widetilde{h_{0}}=\left(\begin{array}{cc}
1 & 0\\
0 & -t
\end{array}\right)$ of $h_{0}$, $\widetilde{h_{-1}}=\left(\begin{array}{cc}
1 & 0\\
0 & -1
\end{array}\right)$ of $h_{-1}$, and $\widetilde{g_{\mathbb{F}}}\left[1\right]=\left(\begin{array}{cc}
t-1 & 1\\
-\Phi & t^{-1}-1
\end{array}\right)$ of $g_{\mathbb{F}}\left[1\right]$. Define two matrices
\begin{align*}
e_{-2t} & :=\left(\begin{array}{cc}
-2t & 0\\
0 & -2t
\end{array}\right),\:e_{-4}:=\left(\begin{array}{cc}
-4 & 0\\
0 & -4
\end{array}\right).
\end{align*}

Then, the group $G=\left\langle e_{-2t},\,e_{-4},\,\widetilde{h_{0}},\,\widetilde{h_{-1}},\,\widetilde{g_{\mathbb{F}}}\left[1\right]\right\rangle $
has a structure as 
\begin{align}
\left(\left(\left\langle e_{-2t}\right\rangle \times\left\langle \widetilde{h_{0}},\,\widetilde{g_{\mathbb{F}}}\left[1\right]\right\rangle \right)\rtimes\left\langle \widetilde{h_{-1}}\right\rangle \right)\times\left\langle e_{-4}\right\rangle ,
\end{align}
where $\left\langle e_{-2t}\right\rangle \cong\mathbb{Z}$, $\left\langle \widetilde{h_{0}},\,\widetilde{g_{\mathbb{F}}}\left[1\right]\right\rangle $
is free of rank 2, $\left\langle \widetilde{h_{-1}}\right\rangle \cong C_{2}$,
and $\left\langle e_{-4}\right\rangle \cong\left\langle -4\right\rangle $,
where ${-4\in\mathbb{F}^{\times}}$. The element $\widetilde{h_{-1}}$
commutes with $e_{-2t}$, $e_{-4}$ and $\widetilde{h_{0}}$, but
nontrivially acts on $\widetilde{g_{\mathbb{F}}}\left[1\right]$ by
\begin{align}
\widetilde{h_{-1}}\widetilde{g_{\mathbb{F}}}\left[1\right]\widetilde{h_{-1}}^{-1} & =e_{-2t}\left(h_{0}^{-1}\widetilde{g_{\mathbb{F}}}\left[1\right]^{-1}h_{0}^{-1}\right).
\end{align}
\end{claim1}
\begin{proofclaim1}

\noindent By (12), the group $\left\langle h_{0},\,g_{\mathbb{F}}\left[1\right]\right\rangle $
is free of rank 2 by deleting another elementary generator $g_{\mathbb{F}}\left[-1\right]$.
Since $e_{-2t}$ and $e_{-4}$ belong to the center of the general
linear group, the group presentation of $G$ is given by expressing
the lifted relators as products of $e_{-2t}$ and $e_{-4}$. The only
difference comes from (15). $\qed$

\noindent \end{proofclaim1}

Using the matrices defined in Claim 1, we obtain the following expression:
\begin{align*}
c & =\widetilde{h_{0}},\:d=\left(e_{-2t}e_{-4}^{-1}\right)\widetilde{h_{-1}}\widetilde{h_{0}}^{-1}\widetilde{g_{\mathbb{F}}}\left[1\right]\widetilde{h_{0}}\widetilde{g_{\mathbb{F}}}\left[1\right]\widetilde{h_{0}}.
\end{align*}

By (14), the group $\left\langle c,\,d\right\rangle $ has an index
2 subgroup $\left\langle c,\,d^{2},\,d^{-1}cd\right\rangle $. From
(15), we have
\begin{align*}
d^{2} & =\left(e_{-2t}^{4}e_{-4}^{-2}\right)\widetilde{h_{0}}^{-2}\widetilde{g_{\mathbb{F}}}\left[1\right]^{-1}\widetilde{h_{0}}^{-1}\widetilde{g_{\mathbb{F}}}\left[1\right]^{-1}\widetilde{h_{0}}^{-1}\widetilde{g_{\mathbb{F}}}\left[1\right]\widetilde{h_{0}}\widetilde{g_{\mathbb{F}}}\left[1\right]\widetilde{h_{0}},\\
d^{-1}cd & =\widetilde{h_{0}}^{-1}\widetilde{g_{\mathbb{F}}}\left[1\right]^{-1}\widetilde{h_{0}}^{-1}\widetilde{g_{\mathbb{F}}}\left[1\right]^{-1}\widetilde{h_{0}}\widetilde{g_{\mathbb{F}}}\left[1\right]\widetilde{h_{0}}\widetilde{g_{\mathbb{F}}}\left[1\right]\widetilde{h_{0}},
\end{align*}

\noindent where we have
\begin{align}
d^{2} & =\left(e_{-2t}^{4}e_{-4}^{-2}\right)c^{-1}\left(d^{-1}cd\right)^{-1}.
\end{align}

From (16), we have $\left\langle c,\,d^{2},\,d^{-1}cd\right\rangle =\left\langle c,\,d^{-1}cd\right\rangle \times\left\langle e_{-2t}^{4}e_{-4}^{-2}\right\rangle $,
where $\left\langle c,\,d^{-1}cd\right\rangle $ is free from the
freeness of $\left\langle \widetilde{h_{0}},\,\widetilde{g_{\mathbb{F}}}\left[1\right]\right\rangle $.
For simplicity, put $e=e_{-2t}^{4}e_{-4}^{-2}$. We derive the presentation
of $\left\langle c,\,d\right\rangle =\left\langle c,\,d^{-1}cd,\,e,\,d\right\rangle $
by listing all relations as follows:
\begin{align*}
1 & =\left[e,\,c\right]=\left[e,\,d^{-1}cd\right]=\left[e,\,d\right],\\
d^{2} & =ec^{-1}\left(d^{-1}cd\right)^{-1},\\
c & =d\left(d^{-1}cd\right)d^{-1},\\
dcd^{-1} & =c^{-1}\left(d^{-1}cd\right)c,
\end{align*}
where $\left[x,\,y\right]=x^{-1}y^{-1}xy$ means the usual commutator
and the final equality follows from (7). By deleting two generators
$d^{-1}cd$ and $e$ from the two middle rows, the only remaining
generators are $c,\,d$, as expected. The remaining relations are
\begin{align*}
1 & =\left[dcdc,\,c\right]=\left[dcdc,\,d\right],\\
dcd^{-1} & =c^{-1}\left(d^{-1}cd\right)c.
\end{align*}

Finally, the first row $1=\left[dcdc,\,c\right]=\left[dcdc,\,d\right]$
follows from the last one as follows:
\begin{align*}
\left[dcdc,\,c\right] & =c^{-1}d^{-1}c^{-1}\left(d^{-1}c^{-1}d\right)cdc^{2}\\
 & =c^{-1}d^{-1}c^{-1}\left(cdc^{-1}d^{-1}c^{-1}\right)cdc^{2}\\
 & =1,
\end{align*}
where the second eqaulity follows from the relation $dcd^{-1}=c^{-1}\left(d^{-1}cd\right)c$,
and
\begin{align*}
\left[dcdc,\,d\right] & =c^{-1}d^{-1}c^{-1}d^{-1}d^{-1}dcdcd\\
 & =c^{-1}d^{-1}c^{-1}\left(d^{-1}cd\right)cd\\
 & =c^{-1}d^{-1}c^{-1}\left(cdcd^{-1}c^{-1}\right)cd\\
 & =1,
\end{align*}
where the third equality follows from the relation $dcd^{-1}=c^{-1}\left(d^{-1}cd\right)c$.
Since linear groups are Hopfian, the proof is complete from Lemma
3.1 and (7).

On the other hand, suppose $\mathbb{F}=\mathbb{F}_{2}$ and consider
the mod 2 Burau images.

\noindent \begin{claim2}

In $\mathrm{GL}\left(2,\,\mathbb{F}_{2}\left[t,\,t^{-1}\right]\right)$,
take the lifts $\widetilde{h_{0}}=\left(\begin{array}{cc}
1 & 0\\
0 & t
\end{array}\right)$ of $h_{0}$ and
\begin{align*}
\widetilde{au_{\mathbb{F}}}\left[1\right] & =\left(\begin{array}{cc}
1+t^{-1}+t & \left(t^{-1}+1\right)\\
\left(1+t\right)\Phi & 1+t^{-1}+t
\end{array}\right)
\end{align*}
of $au_{\mathbb{F}}\left[1\right]$. Define a matrix $e_{t^{2}}:=\left(\begin{array}{cc}
t^{2} & 0\\
0 & t^{2}
\end{array}\right)$. Then, the group $G_{2}=\left\langle e_{t^{2}},\,\widetilde{h_{0}},\,\widetilde{au_{\mathbb{F}}}\left[1\right]\right\rangle $
has a structure as $h_{0}\,al_{\mathbb{F}}\left(f\right)\,h_{0}^{-1}=au_{\mathbb{F}}\left(f\right).$
\begin{align}
\left(\left\langle \widetilde{au_{\mathbb{F}}}\left[1\right]\right\rangle *\left\langle \widetilde{h_{0}}\right\rangle \right)\times\left\langle e_{t^{2}}\right\rangle ,
\end{align}
where $\left\langle \widetilde{au_{\mathbb{F}}}\left[1\right]\right\rangle \cong C_{2}$
and $\left\langle \widetilde{h_{0}}\right\rangle \cong\mathbb{Z}\cong\left\langle e_{t^{2}}\right\rangle $.\end{claim2}
\begin{proofclaim2}

\noindent By (13), the group $\left\langle h_{0},\,au_{\mathbb{F}}\left[1\right]\right\rangle $
is a free product $\left\langle au_{\mathbb{F}}\left[1\right]\right\rangle *\left\langle h_{0}\right\rangle $
by deleting the lower additive generator $al_{\mathbb{F}}\left[1\right]$.
Since $e_{t^{2}}$ belongs to the center of the general linear group,
the group presentation of $G_{2}$ is given by expressing the lifted
relators as product of $e_{t^{2}}$. The only relator $\widetilde{au_{\mathbb{F}}}\left[1\right]^{2}$
is trivial. $\qed$

\noindent \end{proofclaim2}

Using the matrices defined in Claim 2, we obtain the following expression:
\begin{align*}
c & =\widetilde{h_{0}},\:d=e_{t^{2}}\widetilde{h_{0}}^{-2}\widetilde{au_{\mathbb{F}}}\left[1\right]\widetilde{h_{0}}.
\end{align*}

Put $e'=c^{2}dc^{-1}$. By (17), the group $\left\langle c,\,d\right\rangle =\left\langle c,\,e'\right\rangle $
has an index 2 subgroup $\left\langle c,\,e'^{2},\,e'^{-1}ce'\right\rangle $,
where
\begin{align*}
e'^{2} & =e_{t^{2}}^{2},\\
e'^{-1}ce' & =\widetilde{au_{\mathbb{F}}}\left[1\right]\widetilde{h_{0}}\,\widetilde{au_{\mathbb{F}}}\left[1\right],
\end{align*}
where we have $\left\langle c,\,e'^{2},\,e'^{-1}ce'\right\rangle =\left(\left\langle c\right\rangle *\left\langle e'^{-1}ce'\right\rangle \right)\times\left\langle e'^{2}\right\rangle $
from (17). We derive the presentation of $\left\langle c,\,e'\right\rangle =\left\langle c,\,e'^{-1}ce',\,e'^{2},\,e'\right\rangle $
by listing all relations as follows
\begin{align*}
1 & =\left[e'^{2},\,c\right]=\left[e'^{2},\,e'^{-1}ce'\right],\\
e'ce'^{-1} & =e'^{-1}ce',
\end{align*}
where we trivially deleted two generators $e'^{-1}ce',\,e'^{2}$.
However, the last row is simplified as $1=\left[e'^{2},\,c\right]$,
and the first two relations are immediately redundant.

In summary, the group $\left\langle c,\,d\right\rangle $ has two
generators $c,\,d$ and one relation $1=\left[c^{2}dcdc^{-1},\,c\right]$.
The right-hand side is $cd^{-1}c^{-1}d^{-1}c^{-1}dcd$, and we can
reorganize the relation as follows:
\begin{align*}
dcd^{-1} & =c^{-1}\left(d^{-1}cd\right)c,
\end{align*}
which concludes the proof of Theorem 1.3. $\qed$

\noindent \end{prooftheorem13}
\begin{remark}

We may use Theorem 1.3 to prove the partial faithfulness of $\beta_{4,\,r}$
or $\beta_{4,\,r}$ modulo $p$ in the centralizer of other braids.
A good example is $C_{B_{4}}\left(\sigma_{2}\right)$; we directly
compute a generating set of this group from (2), from $\left(\sigma_{1}\sigma_{2}\sigma_{3}\right)\sigma_{i}\left(\sigma_{1}\sigma_{2}\sigma_{3}\right)^{-1}=\sigma_{i+1}$
for $i=1,2$ and $\left(\sigma_{1}\sigma_{2}\sigma_{3}\right)\sigma_{3}\left(\sigma_{1}\sigma_{2}\sigma_{3}\right)^{-1}=b_{2}\sigma_{2}$.
It yields

\[
C_{B_{4}}\left(\sigma_{2}\right)=\left\langle b_{2},\,b_{1}^{-1}b_{2}b_{1}\right\rangle \rtimes\left\langle \sigma_{2},\,\left(\sigma_{1}\sigma_{2}\sigma_{1}\right)^{2}\right\rangle .
\]

Since the representations are faithful, we also have the freeness
of $\left\langle T,\,S^{-1}TS\right\rangle $. Moreover, since $\sigma_{2}$
commutes with both $\left(\sigma_{1}\sigma_{2}\sigma_{1}\right)^{2}$
and $\left(\sigma_{3}\sigma_{2}\sigma_{3}\right)^{2}$, we extend
the classical result of Smythe \citep{MR545151} that the Burau images
of these two braids generate a free group of rank 2, to the Burau
images modulo $p$.

\noindent \end{remark}

Now we return to Salter's question. To answer it, we need to construct
a counterexample in the formally defined group $\overline{\Gamma_{4}'}$.
Recall that we defined a matrix $M\in\mathrm{GL}\left(3,\,\mathbb{Z}\left[t,\,t^{-1},\,\left(1+t\right)^{-1}\right]\right)$
in the beginning of Section 3.

\noindent \begin{lemma}

Suppose $A\in\mathrm{GL}\left(3,\,\mathbb{Z}\left[t,\,t^{-1}\right]\right)$
such that
\begin{align*}
A & =\left(\begin{array}{ccc}
A_{11} & A_{12} & A_{13}\\
A_{21} & A_{22} & A_{23}\\
A_{31} & A_{32} & A_{33}
\end{array}\right).
\end{align*}

Then, we have the following facts about $A$.
\begin{enumerate}
\item $MAM^{-1}\in\mathrm{GL}\left(3,\,\mathbb{Z}\left[t,\,t^{-1}\right]\right)$
if and only if $A_{21}|_{t=-1}=0=A_{23}|_{t=-1}$.
\item $Ms_{2}As_{2}^{-1}M^{-1}\in\mathrm{GL}\left(3,\,\mathbb{Z}\left[t,\,t^{-1}\right]\right)$
if and only if $\left(1,\,-1,\,-1\right)^{T}$ is an eigenvector of
$A^{T}|_{t=-1}$.
\item $M^{-1}AM\in\mathrm{GL}\left(3,\,\mathbb{Z}\left[t,\,t^{-1}\right]\right)$
if and only if $\left(1,\,1,\,1\right)^{T}$ is an eigenvector of
$A|_{t=-1}$.
\end{enumerate}
\noindent \end{lemma}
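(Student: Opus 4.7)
The plan is to treat all three claims as pole-cancellation conditions at $t=-1$, which is the only potential singular locus for $M$, $s_{2}$ and their inverses (each lies in $\mathrm{GL}(3,\mathbb{Z}[t,t^{-1},(1+t)^{-1}])$). The preliminary step is to compute $M^{-1}$ explicitly: solving $Mx=e_{i}$ yields
\[
M^{-1}=\frac{1}{1+t}\begin{pmatrix}1 & t & 0\\ 1+t & 0 & 0\\ t & 0 & t^{2}\end{pmatrix},
\]
exhibiting the simple $(1+t)$-pole of $M^{-1}$ and the fact that $M|_{t=-1}$ is rank one with every row equal to $(0,1,0)$.

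For parts 1 and 3 I would work in the lattice framework. Setting $L:=\mathbb{Z}[t,t^{-1}]^{3}$, the integrality of $MAM^{-1}$ (resp.\ $M^{-1}AM$) is equivalent to $A$ stabilizing the lattice $L':=M^{-1}L$ (resp.\ $ML$). From the explicit $M^{-1}$, the quotient $L'/L$ is generated over $\mathbb{Z}[t,t^{-1}]$ by the classes of $\tfrac{1}{1+t}e_{1}$ and $\tfrac{1}{1+t}e_{3}$; for $A$ to descend, the $e_{2}$-component of $Ae_{1}$ and $Ae_{3}$ must vanish at $t=-1$, giving $A_{21}|_{t=-1}=A_{23}|_{t=-1}=0$ for part 1. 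Symmetrically, the Smith invariants of $M$ over $\mathbb{Q}[t,t^{-1}]$ are $(1,1+t,1+t)$ and $M|_{t=-1}$ has image $\mathbb{Z}(1,1,1)^{T}$, so the identification $t=-1$ gives $L/ML\cong\mathbb{Z}^{3}/\mathbb{Z}(1,1,1)^{T}$; thus $A$ descends iff $A|_{t=-1}$ preserves $\mathbb{Z}(1,1,1)^{T}$, proving part 3.

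Part 2 is the main obstacle, since $\det(Ms_{2})=t^{-2}(1+t)^{2}$ and the inverse $(Ms_{2})^{-1}=s_{2}^{-1}M^{-1}$ can have a pole of order two at $t=-1$ with both factors $M^{-1}$ and $s_{2}^{-1}$ contributing. The plan is to compute the leading Laurent coefficients of $Ms_{2}$ and $(Ms_{2})^{-1}$ at $t=-1$ and observe that they are rank-one matrices of the form $(1,1,1)^{T}\otimes u^{T}$, explicitly with $u=(-2,1,1)$ for $[(1+t)Ms_{2}]|_{t=-1}$ and $u=(3,-2,-1)$ for $[(1+t)^{2}(Ms_{2})^{-1}]|_{t=-1}$. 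Expanding $(Ms_{2})A(Ms_{2})^{-1}$ in powers of $(1+t)$ and cancelling the polar terms in succession produces scalar conditions on $A|_{t=-1}$; the rank-one structure, combined with the alignment of residues forced by $(Ms_{2})(Ms_{2})^{-1}=I$, should collapse the system to the requirement that $(1,-1,-1)$ is a left eigenvector of $A|_{t=-1}$, equivalently that $(1,-1,-1)^{T}$ is an eigenvector of $A^{T}|_{t=-1}$. The delicate point will be verifying this collapse — specifically, showing that the sub-leading pole cancellations (involving higher Laurent coefficients of $A$) are automatic consequences of the single first-order eigenvector condition rather than new independent constraints.
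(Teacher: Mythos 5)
Your computation of $M^{-1}$ is correct, and your lattice arguments for parts 1 and 3 are sound; they are a clean packaging of the one-residue divisibility check that the paper compresses into ``direct computation,'' so those two parts stand. The genuine gap is exactly the step you flagged in part 2, and it is not a removable delicacy: the collapse you hope for does not happen for the expression as you set it up. Writing $(Ms_{2})A(Ms_{2})^{-1}=(1+t)^{-3}QAR$ with $Q=(1+t)Ms_{2}$ and $R=(1+t)^{2}(Ms_{2})^{-1}$ (your leading coefficients $Q|_{t=-1}=(1,1,1)^{T}(-2,1,1)$ and $R|_{t=-1}=(1,1,1)^{T}(3,-2,-1)$ are correct), the vanishing of the $(1+t)^{0}$ term gives only the scalar condition $(-2,1,1)\,A|_{t=-1}\,(1,1,1)^{T}=0$, while the $(1+t)^{1}$ and $(1+t)^{2}$ terms involve the higher Taylor coefficients of $A$ at $t=-1$ and are genuinely independent constraints. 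A concrete witness: $A=I+(1+t)E_{12}\in\mathrm{GL}\left(3,\,\mathbb{Z}\left[t,\,t^{-1}\right]\right)$, with $E_{12}$ the matrix unit, has $A|_{t=-1}=I$, so $(1,-1,-1)^{T}$ is trivially an eigenvector of $A^{T}|_{t=-1}$; yet $(Ms_{2})A(Ms_{2})^{-1}-I=(1+t)^{-2}\left[(1+t)Ms_{2}e_{1}\right]\left[(1+t)^{2}e_{2}^{T}(Ms_{2})^{-1}\right]$, and the numerator evaluates at $t=-1$ to $(-2,-2,-2)^{T}(3,-2,-1)\neq0$, so this conjugate is not integral. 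Hence no argument can deduce integrality of the doubly $M$-conjugated matrix from a condition on $A|_{t=-1}$ alone, and your plan for part 2 cannot be completed in that form.

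The statement that is both true and actually needed (it is what certifies membership in the stable subgroup $\mathcal{S}$ in the proof of Theorem 1.2) involves a single conjugation by $M$: for integral $A$ one asks when $M\beta_{4,\,r}\left(\sigma_{2}\right)A\beta_{4,\,r}\left(\sigma_{2}\right)^{-1}M^{-1}=s_{2}\left(MAM^{-1}\right)s_{2}^{-1}$ is integral. There the pole at $t=-1$ is simple, because $M\beta_{4,\,r}\left(\sigma_{2}\right)$ is integral with value $(1,1,1)^{T}(-1,1,1)$ at $t=-1$, and $(1+t)\beta_{4,\,r}\left(\sigma_{2}\right)^{-1}M^{-1}$ is integral with value $K=\left(\begin{smallmatrix}1&-1&0\\2&-1&-1\\-1&0&1\end{smallmatrix}\right)$ at $t=-1$. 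The sole obstruction is then the residue $(1,1,1)^{T}\,(-1,1,1)\,A|_{t=-1}\,K$, and since $K$ has rank two with left kernel spanned by $(-1,1,1)$, its vanishing is exactly the condition that $(1,-1,-1)^{T}$ is an eigenvector of $A^{T}|_{t=-1}$ (invertibility of $A|_{t=-1}$ rules out the degenerate case). Redoing part 2 as this one-residue computation, parallel to your treatments of parts 1 and 3, closes the gap.
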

\begin{proof}

\noindent Direct computation. $\qedhere$

\noindent \end{proof}

We are ready to prove Theorem 1.2. Based on our analysis of the centralizer
of $\sigma_{3}$, we formulate the \emph{centralizer hypothesis} and
the \emph{weak centralizer hypothesis}.

\noindent \begin{hypothesis}

$ $
\begin{itemize}
\item $\left(\mathrm{CH}\right)$ $\beta_{4,\,r}\left(C_{B_{4}}\left(\sigma_{3}\right)\right)=C_{\beta_{4,\,r}\left(B_{4}\right)}\left(\beta_{4,\,r}\left(\sigma_{3}\right)\right).$
\item $\left(\mathrm{WCH}\right)$ $C_{\beta_{4,\,r}\left(B_{4}\right)}\left(\beta_{4,\,r}\left(\sigma_{3}\right)\right)\cap\left\langle S,\,T\right\rangle =\left\langle S,\,TST^{-1}\right\rangle $.
\end{itemize}
\noindent \end{hypothesis}

It is obvious that the faithfulness of $\beta_{4,\,r}$ implies $\left(\mathrm{CH}\right)$
and $\left(\mathrm{CH}\right)$ implies $\left(\mathrm{WCH}\right)$.
In fact, for the usual Burau $\beta_{4,\,r}$, $\left(\mathrm{WCH}\right)$
again implies $\left(\mathrm{CH}\right)$, but we have introduced
$\left(\mathrm{WCH}\right)$ separately considering possible extensions
to the mod $p$ cases.

\noindent \begin{prooftheorem12}

We find a matrix $A\in\Gamma_{4}'$ with determinant 1 that commutes
with $\beta_{4,\,r}\left(\sigma_{3}\right)$. In the light of Lemma
3.9, it is convenient to consider a matrix $MAM^{-1}$ satisfying
the diagonalized unitarity (8). Suppose a matrix $B\in\mathrm{GL}\left(3,\,\mathbb{Z}\left[t,\,t^{-1}\right]\right)$
has the following form:
\begin{align*}
\left(\begin{array}{ccc}
B_{11} & B_{12} & 0\\
B_{21} & B_{22} & 0\\
0 & 0 & 1
\end{array}\right),
\end{align*}
satisfying $s_{3}B=Bs_{3}$. Put $B'=\left(\begin{array}{cc}
B_{11} & B_{12}\\
B_{21} & B_{22}
\end{array}\right)$. We find $B'$ making $B$ as a counterexample.

By using the tree-based algorithm described in \citep[Section 5]{lee2024salters},
there is an element in $\mathrm{PSL}\left(2,\,\mathbb{Z}\left[t,\,t^{-1}\right]\right)$:
\begin{align*}
g_{\mathbb{Q}}\left[-\frac{1}{2}\right]^{-1}g_{\mathbb{Q}}\left[\frac{6}{5}\right]g_{\mathbb{Q}}\left[-\frac{7}{13}\right]^{-1}g_{\mathbb{Q}}\left[\frac{13}{15}\right]g_{\mathbb{Q}}\left[-\frac{8}{13}\right]^{-1}g_{\mathbb{Q}}\left[\frac{5}{6}\right]g_{\mathbb{Q}}\left[-2\right]^{-1},
\end{align*}
and taking a lift $C$ of this in $\mathrm{SL}\left(2,\,\mathbb{Z}\left[t,\,t^{-1}\right]\right)$,
we have an element:
\begin{align*}
C & =\left(\begin{array}{cc}
\frac{\left(1-t+t^{2}\right)\left(-2+6t-9t^{2}+8t^{3}-6t^{4}+2t^{5}\right)}{t^{4}} & \frac{\left(1-t\right)\left(2-2t+t^{2}\right)\left(-2+2t-2t^{2}+t^{3}\right)}{t^{3}}\\
\frac{\left(-1+t\right)\left(1+t^{2}\right)\left(1-2t+2t^{2}\right)\left(-1+2t-2t^{2}+2t^{3}\right)}{t^{4}} & \frac{\left(1-t+t^{2}\right)\left(2-6t+8t^{2}-9t^{3}+6t^{4}-2t^{5}\right)}{t^{3}}
\end{array}\right).
\end{align*}

We can verify by hand that $\det\left(C\right)=1$. However, since
$\left(1,\,1\right)^{T}$ is not an eigenvector of $C|_{t=-1}$, we
need to modify this matrix. By using the matrix $\widetilde{h_{-1}}=\left(\begin{array}{cc}
1 & 0\\
0 & -1
\end{array}\right)$, we have an involution
\begin{align*}
\widetilde{h_{-1}}\left(\begin{array}{cc}
a_{11} & a_{12}\\
a_{21} & a_{22}
\end{array}\right)\widetilde{h_{-1}} & =\left(\begin{array}{cc}
a_{11} & -a_{12}\\
-a_{21} & a_{22}
\end{array}\right).
\end{align*}

Thus, we have
\begin{align*}
\left(C\widetilde{h_{-1}}C\widetilde{h_{-1}}\right)|_{t=-1} & =\left(\begin{array}{cc}
1 & 0\\
0 & 1
\end{array}\right).
\end{align*}

Choose $B'=C\widetilde{h_{-1}}C\widetilde{h_{-1}}$. By Lemma 3.9
(3), the group $\Gamma_{4}'$ includes the matrix $M^{-1}BM$ with
determinant 1. Suppose that the Burau image $\beta_{4,\,r}\left(B_{4}\right)$
includes $M^{-1}BM$. It is included in the Burau image of the tame
subgroup $\mathcal{T}$ by construction, but not included in the image
of the stable subgroup $\mathcal{S}$, satisfying
\begin{align*}
\left(M^{-1}BM\right)|_{t=-1} & =\left(\begin{array}{ccc}
1 & 41616 & 0\\
0 & 1 & 0\\
0 & -17238 & 1
\end{array}\right).
\end{align*}

Now choose a matrix $A$ as
\begin{align*}
A & =\left(M^{-1}BM\right)\beta_{4,\,r}\left(\sigma_{1}^{-58854}\left(\sigma_{3}\sigma_{2}\sigma_{3}\right)^{19618}\right).
\end{align*}

By Lemma 3.9 (2), the matrix $A$ is in the image of $\mathcal{S}$.
Since $\det A=1$, we have ${A\in\beta_{4,\,r}\left(b_{1},\,b_{2}\right)}$
from Lemma 3.4. Assume the weak centralizer hypothesis $\left(\mathrm{WCH}\right)$.
Then, the matrix $A$ is included in the group $\left\langle S,\,TST^{-1}\right\rangle $.
However, it is impossible, since the $2\times2$ submatrices in the
upper left of $MSM^{-1}$ and $MTST^{-1}M^{-1}$ are generated by
$h_{0}$ and $g_{\mathbb{Q}}\left[1\right]$ in the projective image,
as in the proof of Theorem 1.3. $\qed$

\noindent \end{prooftheorem12}
\begin{remark}

We may pose Salter's question \emph{modulo }$p$, by defining formal
matrix groups $\Gamma'$ over $\mathbb{F}_{p}\left[t,\,t^{-1}\right]$.
Cooper\textendash Long \citep{MR1431138} showed that the central
quotient of the mod 2 reduced Burau image of $B_{4}$ generates the
whole projective unitary group over $\mathbb{F}_{2}\left[t,\,t^{-1}\right]$.
Therefore, Salter's question of $B_{4}$ modulo 2 is positively solved.
The cases $p>2$ are more tricky, but if we assume $\left(\mathrm{WCH}\right)$
modulo $p$, the counterexample $A$ is a generic counterexample for
$p\ge5$. In brief, for $p\ne17$, the element $g_{\mathbb{F}_{p}}\left[-\frac{1}{2}\right]$
is not able to be generated by $\left\langle h_{0},\,g_{\mathbb{F}_{p}}\left[1\right]\right\rangle $.
For $p=17$, the element $g_{\mathbb{F}_{17}}\left[-\frac{7}{13}\right]$
is such an element. Furthermore, this counterexample $A$ shows that
the centralizer in the Burau image modulo $p\ge5$ is an infinite
index subgroup of the centralizer in Salter's target group $\Gamma'$.
Under $\left(\mathrm{WCH}\right)$, this explains why Cooper\textendash Long
\citep{MR1668343} was not able to algorithmically compute the index
of the mod 5 Burau image in the whole projective unitary group.

We may also consider the mod $p$ (or even mod $n$) tame subgroup
$\mathcal{T}_{p}$ and the mod $p$ stable subgroup $\mathcal{S}_{p}$
as subgroups of $B_{4}$. By definition, the group $\mathcal{T}_{p}$
(resp. $\mathcal{S}_{p}$) contains $\mathcal{T}$ (resp. $\mathcal{S}$).
In the light of Lemma 3.9 (1), the quotient $\mathcal{T}_{p}/\left\langle b_{1},\,b_{2}\right\rangle $
is isomorphic to the (upper) Hecke congruence subgroup of level $p$
modulo the center, as expected by \citep{lee2024buraurepresentationb3modulo}.

\noindent \end{remark}

\section{The Entire Unitary Group}

In this section, we consider the unitary group as a whole satisfying
the unitarity (8):
\begin{align*}
\overline{A}\left(\begin{array}{ccc}
1 & 0 & 0\\
0 & t+t^{-1} & 0\\
0 & 0 & t+t^{-1}
\end{array}\right)A^{T} & =\left(\begin{array}{ccc}
1 & 0 & 0\\
0 & t+t^{-1} & 0\\
0 & 0 & t+t^{-1}
\end{array}\right),
\end{align*}
where we recall that $D=\left(\begin{array}{ccc}
1 & 0 & 0\\
0 & t+t^{-1} & 0\\
0 & 0 & t+t^{-1}
\end{array}\right)$. Slightly abusing notation, we also denote its projectivization by
$D$.

\noindent \begin{definition}

For a field $\mathbb{F}$, the \emph{projective unitary group} (with
respect to $D$), $\mathrm{PU}\left(\mathbb{F}\right)$, is defined
to be
\begin{align*}
\left\{ A\in\mathrm{PGL}\left(3,\,\mathbb{F}\left[t,\,t^{-1}\right]\right)\::\:\overline{A}DA^{T}=D\right\} .
\end{align*}
\end{definition}

A natural approach would be to act this unitary group on the associated
Bruhat-Tits building. For general definitions of buildings, see \citep{MR2439729,MR3415244,MR1431138}.
In brief, for a field $\mathbb{F}$, we consider the two-dimensional
Bruhat-Tits building $X_{\mathbb{F}\left(t\right)}$ with the valuation
at infinity $v_{\infty}$, which is a simplicial complex whose vertices
are $3\times3$ equivalence classes of lattices along with a uniformizing
element $\pi=t^{-1}$. We consider the action by the left multiplication
$\mathrm{PGL}\left(3,\,\mathbb{F}\left[t,\,t^{-1}\right]\right)$
on $X_{\mathbb{F}\left(t\right)}$.

It appears difficult to obtain general results independent of the
base field $\mathbb{F}$, as in the case of the $2\times2$ similitude
group. Attempting to derive generators based on the elementary generators
$g_{\mathbb{F}}\left[r\right]$ leads to issues in the remaining dimension.
The (modulo square) determinant of an elementary generator $g_{\mathbb{F}}\left[r\right]$
is $1+r^{4}$ for each $r\in\mathbb{F}$, but in many fields, the
square root $\sqrt{1+r^{4}}$ does not exist in general. When working
with fields of characteristic 0, for computational purposes, we must
restrict our focus to cases where a square root is defined for a broad
range of elements. A natural choice of such fields is the reals $\mathbb{R}$,
or more generally a Euclidean field $\mathbb{E}$, which is an ordered
field in which there is a square root for every non-negative element
(see \citep{MR2104929}).

From now on, for a matrix $A\in\mathrm{PGL}\left(3,\,\mathbb{E}\left(t\right)\right)$,
denote by $\left[A\right]$ the lattice $A\cdot\left[I\right]$, where
$\left[I\right]$ is the standard lattice. We sometimes refer to $\left[I\right]$
as the \emph{identity lattice}.

\noindent \begin{definition}

For a Euclidean field $\mathbb{E}$, for each element $r\in\mathbb{E}$
such that $0<r$, the \emph{elementary generator} $k_{\mathbb{E}}\left[r\right]$
in $\mathrm{PU}\left(\mathbb{E}\right)$ is defined to be
\begin{align*}
k_{\mathbb{E}}\left[r\right]\, & :=\,\left[\begin{array}{ccc}
\frac{r^{2}-t}{\sqrt{1+r^{4}}} & \frac{-rt}{\sqrt{1+r^{4}}} & 0\\
\frac{-r\left(t^{-1}+t\right)}{\sqrt{1+r^{4}}} & \frac{1-r^{2}t}{\sqrt{1+r^{4}}} & 0\\
0 & 0 & 1
\end{array}\right].
\end{align*}

We add two \emph{elementary generators} $k_{\mathbb{E}}\left[0\right]$
and $k_{\mathbb{E}}\left[\infty\right]$ as follows.
\begin{align*}
k_{\mathbb{E}}\left[0\right] & :=\left[\begin{array}{ccc}
-t & 0 & 0\\
0 & 1 & 0\\
0 & 0 & 1
\end{array}\right],\:k_{\mathbb{E}}\left[\infty\right]:=\left[\begin{array}{ccc}
1 & 0 & 0\\
0 & -t & 0\\
0 & 0 & 1
\end{array}\right].
\end{align*}

Finally, for each orthogonal matrix $A\in\mathrm{O}\left(2,\,\mathbb{E}\right)$,
the \emph{orthogonal generator} $o_{\mathbb{E}}\left[A\right]$ is
defined to be
\begin{align*}
o_{\mathbb{E}}\left[A\right]\, & :=\,\left[\begin{array}{cc}
1 & \overrightarrow{0}^{T}\\
\overrightarrow{0} & A
\end{array}\right],
\end{align*}
where $\overrightarrow{0}$ is the two-dimensional zero vector.

\noindent \end{definition}
\begin{lemma}

For a Euclidean field $\mathbb{E}$, the group $\mathrm{PU}\left(\mathbb{E}\right)$
acts transitively on the vertices of the Bruhat-Tits building $X_{\mathbb{E}\left(t\right)}$.

\noindent \end{lemma}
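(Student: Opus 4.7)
The plan is to establish an Iwasawa-type decomposition
$\mathrm{PGL}(3,\mathbb{E}((t^{-1}))) = \mathrm{PU}(\mathbb{E}) \cdot \mathrm{Stab}([I])$,
which is equivalent to the transitivity claim: for every vertex $[A]$, I would produce an explicit element of $\mathrm{PU}(\mathbb{E})$ moving $[I]$ to $[A]$. The first step is to split the vertices of $X_{\mathbb{E}(t)}$ into three orbits under $\mathrm{Stab}([I])$, indexed by $v_\infty(\det A) \bmod 3$. The type-changing elementary generators $k_\mathbb{E}[0]$ and $k_\mathbb{E}[\infty]$ each have determinant $-t$, shifting the type class by a fixed nonzero element of $\mathbb{Z}/3$; consequently, powers of one of them realize all three type shifts, and the problem reduces to transitivity within a single fixed type.

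Within a fixed type, I would drive any lattice representative $A$ to the identity lattice by a reduction algorithm that alternates two kinds of moves. The orthogonal generators $o_\mathbb{E}[O]$ for $O \in \mathrm{O}(2,\mathbb{E})$ rotate the last two coordinates; since $\mathbb{E}$ is Euclidean, every non-negative element admits a square root, which provides enough room to perform Gram--Schmidt orthogonalization on the bottom $2\times 2$ block with respect to the diagonal form $D$. The elementary generators $k_\mathbb{E}[r]$ for $r>0$ act on the first two coordinates exactly like the $2\times 2$ similitude generators $g_\mathbb{E}[r]$ of \citep{lee2024buraurepresentationb3modulo}, and allow a Euclidean-algorithm-style reduction on the $(1,2)$-block, analogous to the tree-based algorithm developed there. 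Iterating these two kinds of moves---rotating a coordinate into the first two rows, then reducing it---whittles down a complexity measure such as the total $t^{-1}$-adic valuation of the off-diagonal entries of $A$.

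The main obstacle will be choosing the right complexity measure and proving strict descent. In the rank-one case of \citep{lee2024buraurepresentationb3modulo}, termination is built into the tree structure of the associated building, but here the building is $2$-dimensional and one must also track the rotational degree of freedom in the last two coordinates, so care is needed to avoid cycling between orthogonal and elementary moves. The Euclidean hypothesis is precisely what makes each reduction step realizable inside the integral unitary group $\mathrm{PU}(\mathbb{E})$: the square roots $\sqrt{1+r^4}$ appearing in the definition of $k_\mathbb{E}[r]$, and those needed for Gram--Schmidt with respect to the positive form on the lower $2\times 2$ block, both require closure of $\mathbb{E}$ under square roots of non-negative elements.
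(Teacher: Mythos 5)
Your overall strategy---an Iwasawa-type decomposition obtained by a reduction algorithm---is a legitimate way to prove vertex-transitivity, but as written it has a genuine gap exactly where you flag ``the main obstacle'': the strict descent of a complexity measure under alternating orthogonal and elementary moves is never established, and that descent \emph{is} the content of the lemma. Nothing in the proposal rules out the cycling you worry about, nor is it shown that after an orthogonal rotation the resulting block is always of a form that an elementary generator $k_{\mathbb{E}}\left[r\right]$ can strictly reduce: the $k_{\mathbb{E}}\left[r\right]$ realize only a specific one-parameter family of similitude moves, not arbitrary unimodular column operations, so a Euclidean-algorithm step is not automatically available. A smaller point: the three classes indexed by $v_{\infty}\left(\det A\right)\bmod 3$ are not orbits of $\mathrm{Stab}\left(\left[I\right]\right)$ (that stabilizer, being $\mathrm{O}\left(2,\,\mathbb{E}\right)$, has infinitely many orbits on vertices); they are just the type classes, though your use of $k_{\mathbb{E}}\left[0\right]$ and $k_{\mathbb{E}}\left[\infty\right]$ to shift between them is sound.

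The paper avoids the global reduction entirely by a local-to-global argument, and this is where the two routes genuinely diverge. It suffices to show that every vertex adjacent to $\left[I\right]$ already lies in the $\mathrm{PU}\left(\mathbb{E}\right)$-orbit of $\left[I\right]$: the link $\mathrm{Lk}_{\left[I\right]}$ is identified with the points and lines of $\mathbb{E}\mathbf{P}^{2}$, and a direct lattice computation shows that $\left[o_{\mathbb{E}}\left[SO\right]k_{\mathbb{E}}\left[r\right]o_{\mathbb{E}}\left[SO\right]^{-1}\right]$ for $r>0$, together with $\left[k_{\mathbb{E}}\left[0\right]\right]$ and the conjugates of $k_{\mathbb{E}}\left[\infty\right]$, exhaust all points (and, passing to inverses, all lines); the Euclidean hypothesis enters only to clear the factors $\sqrt{1+r^{4}}$ and to parametrize $\mathbb{E}\mathbf{P}^{2}$ by $\mathrm{SO}\left(2,\,\mathbb{E}\right)$-conjugates. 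Connectivity of the building then propagates this one-step statement to all vertices: if $g\left[I\right]$ is in the orbit and $v$ is adjacent to it, then $g^{-1}v$ is adjacent to $\left[I\right]$, hence equals $h\left[I\right]$ for some $h\in\mathrm{PU}\left(\mathbb{E}\right)$. To repair your argument, either supply the terminating complexity measure for the global descent, or---more economically---replace it by this one-step computation in the link.
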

\begin{proof}

\noindent The elements in the link of the identity lattice $\left[I\right]$,
$\mathrm{Lk}_{\left[I\right]}$, correspond to the elements in the
projective plane $\mathbb{E}\mathbf{P}^{2}$. More concretely, the
elements of type 1 correspond to the points in $\mathbb{E}\mathbf{P}^{2}$,
and the elements of type -1 correspond to the the lines in $\mathbb{E}\mathbf{P}^{2}$.
Consider lattices made of the elementary generators. For $r>0$, a
computation yields
\begin{align*}
\left[k_{\mathbb{E}}\left[r\right]\right] & =\left[\left[\begin{array}{ccc}
\frac{r^{2}-t}{\sqrt{1+r^{4}}} & \frac{-rt}{\sqrt{1+r^{4}}} & 0\\
\frac{-r\left(t^{-1}+t\right)}{\sqrt{1+r^{4}}} & \frac{1-r^{2}t}{\sqrt{1+r^{4}}} & 0\\
0 & 0 & 1
\end{array}\right]\right]\\
 & =\left[\left[\begin{array}{ccc}
r^{2}-t & -rt & 0\\
-r\left(t^{-1}+t\right) & 1-r^{2}t & 0\\
0 & 0 & 1
\end{array}\right]\right]\\
 & =\left[\left[\begin{array}{ccc}
0 & -rt & 0\\
-\frac{1+r^{4}}{r} & 1-r^{2}t & 0\\
0 & 0 & 1
\end{array}\right]\right]\\
 & =\left[\left[\begin{array}{ccc}
t & 0 & 0\\
rt & 1 & 0\\
0 & 0 & 1
\end{array}\right]\right],
\end{align*}
where for the second and the last equalities, we used the fact $1+r^{4}>0$.

Therefore, for each $r>0$ and each special orthogonal matrix $SO\in\mathrm{O}\left(2,\,\mathbb{E}\right)$,
the lattice
\begin{align*}
\left[o_{\mathbb{E}}\left[SO\right]k_{\mathbb{E}}\left[r\right]o_{\mathbb{E}}\left[SO\right]^{-1}\right]
\end{align*}
represents a unique point in $\mathbb{E}\mathbf{P}^{2}$. The other
points in $\mathbb{E}\mathbf{P}^{2}$ correspond to $k_{\mathbb{E}}\left[0\right]$
and
\begin{align*}
\left[o_{\mathbb{E}}\left[SO\right]k_{\mathbb{E}}\left[\infty\right]o_{\mathbb{E}}\left[SO\right]^{-1}\right].
\end{align*}

Note that there is some overlap among $\left[o_{\mathbb{E}}\left[SO\right]k_{\mathbb{E}}\left[\infty\right]o_{\mathbb{E}}\left[SO\right]^{-1}\right]$.
Considering the inverses of type 1 lattices, each lattice in $\mathrm{Lk}_{\left[I\right]}$
is represented by acting some element in $\mathrm{PU}\left(\mathbb{E}\right)$
on the identity lattice $\left[I\right]$, which concludes the proof.
$\qedhere$

\noindent \end{proof}

The stabilizer of the identity lattice $\left[I\right]$ is the subgroup
of $\mathrm{PU}\left(\mathbb{E}\right)$ generated by the matrices
in which every entry is in $\mathbb{E}$. Therefore, the stabilizer
is exactly the subgroup generated by the orthogonal generators $o_{\mathbb{E}}\left[A\right]$,
which is isomorphic to $\mathrm{O}\left(2,\,\mathbb{E}\right)$. Since
the building is a CAT(0) space, it is simply connected, and we can
write the presentation of $\mathrm{PU}\left(\mathbb{E}\right)$ only
in terms of the relations of $\mathrm{O}\left(2,\,\mathbb{E}\right)$
and those given within the link of the identity lattice. The relations
involving elementary generators are given by
\begin{align}
k_{\mathbb{E}}\left[\infty\right]k_{\mathbb{E}}\left[0\right] & =o_{\mathbb{E}}\left[\begin{array}{cc}
0 & -1\\
1 & 0
\end{array}\right]k_{\mathbb{E}}\left[\infty\right]^{-1}o_{\mathbb{E}}\left[\begin{array}{cc}
0 & 1\\
-1 & 0
\end{array}\right],\\
k_{\mathbb{E}}\left[r\right]o_{\mathbb{E}}\left[\begin{array}{cc}
-1 & 0\\
0 & -1
\end{array}\right]k_{\mathbb{E}}\left[r^{-1}\right] & =o_{\mathbb{E}}\left[\begin{array}{cc}
0 & 1\\
-1 & 0
\end{array}\right]k_{\mathbb{E}}\left[\infty\right]^{-1}o_{\mathbb{E}}\left[\begin{array}{cc}
0 & 1\\
-1 & 0
\end{array}\right],
\end{align}
where $r>0$;
\begin{align}
\begin{array}{cc}
 & k_{\mathbb{E}}\left[\sqrt{r_{1}}\right]o_{\mathbb{E}}\left[\begin{array}{cc}
-\sqrt{\frac{1}{r_{1}r_{2}}} & -\sqrt{1-\frac{1}{r_{1}r_{2}}}\\
\sqrt{1-\frac{1}{r_{1}r_{2}}} & -\sqrt{\frac{1}{r_{1}r_{2}}}
\end{array}\right]k_{\mathbb{E}}\left[\sqrt{r_{2}}\right]\\
= & o_{\mathbb{E}}\left[\begin{array}{cc}
-\sqrt{\frac{r_{2}-r_{1}^{-1}}{r_{1}+r_{2}}} & \sqrt{\frac{r_{1}+r_{1}^{-1}}{r_{1}+r_{2}}}\\
-\sqrt{\frac{r_{1}+r_{1}^{-1}}{r_{1}+r_{2}}} & -\sqrt{\frac{r_{2}-r_{1}^{-1}}{r_{1}+r_{2}}}
\end{array}\right]k_{\mathbb{E}}\left[\sqrt{\frac{r_{1}+r_{2}}{r_{1}r_{2}-1}}\right]^{-1}o_{\mathbb{E}}\left[\begin{array}{cc}
-\sqrt{\frac{r_{1}-r_{2}^{-1}}{r_{1}+r_{2}}} & \sqrt{\frac{r_{2}+r_{2}^{-1}}{r_{1}+r_{2}}}\\
-\sqrt{\frac{r_{2}+r_{2}^{-1}}{r_{1}+r_{2}}} & -\sqrt{\frac{r_{1}-r_{2}^{-1}}{r_{1}+r_{2}}}
\end{array}\right],
\end{array}
\end{align}
where $r_{1},\,r_{2}>0$ and $r_{1}r_{2}>1$;
\begin{align*}
\left[k_{\mathbb{E}}\left[0\right],\,o_{\mathbb{E}}\left[A\right]\right] & =1,
\end{align*}
for each $A\in\mathrm{O}\left(2,\,\mathbb{E}\right)$;
\begin{align*}
\left[k_{\mathbb{E}}\left[\infty\right],\,o_{\mathbb{E}}\left[\begin{array}{cc}
-1 & 0\\
0 & -1
\end{array}\right]\right] & =1;
\end{align*}
and
\begin{align*}
\left[k_{\mathbb{E}}\left[r\right],\,o_{\mathbb{E}}\left[\begin{array}{cc}
1 & 0\\
0 & -1
\end{array}\right]\right] & =1,
\end{align*}
for $r\ge0$ or $r=\infty$.

To proceed further, we ask whether there is a subgroup of $\mathrm{PU}\left(\mathbb{E}\right)$
that acts transitively on the vertices of the building with small
stabilizers. At this stage, it is challenging to find a suitable answer
for the fields $\mathbb{R}$ or $\mathbb{E}$. Let us extend the field
once more. For a Euclidean field $\mathbb{E}$, we consider the field
$\mathbb{E}\left(i\right)$, where $i^{2}=-1$. An important distinction
arises here: the bar involution $\overline{\left(\right)}$ we defined
in Section 1 for the Laurent polynomial ring $\mathbb{E}\left[t,\,t^{-1}\right]$
must now be extended compatibly with $i$. We extend the involution
in $\mathbb{E}\left(i\right)\left[t,\,t^{-1}\right]$ such that $\overline{i}=-i$.
Then, we construct the building $X_{\mathbb{E}\left(i\right)\left(t\right)}$
in the same manner and define the projective unitary group $\mathrm{PU}\left(\mathbb{E}\left(i\right)\right)$.

\noindent \begin{definition}

For a Euclidean field $\mathbb{E}$, for each element $r\in\mathbb{E}$
such that $0<r$, the \emph{unipotent generator} $\mathbf{k}_{\mathbb{E}\left(i\right)}\left[r\right]$
in $\mathrm{PU}\left(\mathbb{E}\left(i\right)\right)$ is defined
to be
\begin{align*}
\mathbf{k}_{\mathbb{E}\left(i\right)}\left[r\right]\, & :=\,k_{\mathbb{E}}\left[r\right]\left[\begin{array}{ccc}
1 & 0 & 0\\
0 & \frac{r^{2}+i}{1+ir^{2}} & 0\\
0 & 0 & \frac{r^{2}+i}{\sqrt{1+r^{4}}}
\end{array}\right].
\end{align*}

Moreover, we add two unipotent generators $\mathbf{k}_{\mathbb{E}\left(i\right)}\left[0\right]$\emph{
}and $\mathbf{k}_{\mathbb{E}\left(i\right)}\left[\infty\right]$ as
follows.
\begin{align*}
\mathbf{k}_{\mathbb{E}\left(i\right)}\left[0\right] & :=\left[\begin{array}{ccc}
it & 0 & 0\\
0 & 1 & 0\\
0 & 0 & 1
\end{array}\right],\:\mathbf{k}_{\mathbb{E}\left(i\right)}\left[\infty\right]:=\left[\begin{array}{ccc}
1 & 0 & 0\\
0 & it & 0\\
0 & 0 & 1
\end{array}\right].
\end{align*}

For each (usual) unitary matrix $A\in\mathrm{U}\left(2,\,\mathbb{E}\left(i\right)\right)$,
the \emph{unitary generator} $u_{\mathbb{E}\left(i\right)}\left[A\right]$
are defined to be
\begin{align*}
u_{\mathbb{E}\left(i\right)}\left[A\right]\, & :=\,\left[\begin{array}{cc}
1 & \overrightarrow{0}^{T}\\
\overrightarrow{0} & A
\end{array}\right],
\end{align*}
where $\overrightarrow{0}$ is the two-dimensional zero vector.

Finally, the \emph{unipotent kernel} $\mathcal{U}_{\mathbb{E}\left(i\right)}\subset\mathrm{PU}\left(\mathbb{E}\left(i\right)\right)$
is the set of elements $B\in\mathrm{PU}\left(\mathbb{E}\left(i\right)\right)$
such that the evaluation at $t=-i$ is unipotent, or
\begin{align*}
B|_{t=-i} & =\left[\begin{array}{ccc}
1 & * & *\\
0 & 1 & *\\
0 & 0 & 1
\end{array}\right].
\end{align*}
\end{definition}
\begin{theorem}

For a Euclidean field $\mathbb{E}$, the unipotent kernel $\mathcal{U}_{\mathbb{E}\left(i\right)}$
is generated by the unipotent generators and their conjugates by unitary
generators $u_{\mathbb{E}}\left[A\right]$ such that $A\in\mathrm{SU}\left(2,\,\mathbb{E}\left(i\right)\right)$.
The group $\mathcal{U}_{\mathbb{E}\left(i\right)}$ acts on the building
$X_{\mathbb{E}\left(i\right)\left(t\right)}$ transitively on the
vertices with the trivial stabilizers, and is a normal subgroup of
$\mathrm{PU}\left(\mathbb{E}\left(i\right)\right)$. The quotient
map is split, and makes $\mathrm{PU}\left(\mathbb{E}\left(i\right)\right)$
a semidirect product:
\begin{align*}
\mathrm{PU}\left(\mathbb{E}\left(i\right)\right) & =\mathcal{U}_{\mathbb{E}\left(i\right)}\rtimes\mathrm{U}\left(2,\,\mathbb{E}\left(i\right)\right),
\end{align*}
where the unitary group $\mathrm{U}\left(2,\,\mathbb{E}\left(i\right)\right)$
is identified with $u_{\mathbb{E}\left(i\right)}\left[\mathrm{U}\left(2,\,\mathbb{E}\left(i\right)\right)\right]$.

\noindent \end{theorem}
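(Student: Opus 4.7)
The plan is to verify in turn the three assertions of the theorem: that $\mathcal{U}_{\mathbb{E}(i)}$ is a normal subgroup complementary to $u_{\mathbb{E}(i)}[\mathrm{U}(2,\mathbb{E}(i))]$ yielding the semidirect product decomposition; that the action on vertices is simply transitive; and that the listed unipotent generators together with their $\mathrm{SU}(2,\mathbb{E}(i))$-conjugates generate $\mathcal{U}_{\mathbb{E}(i)}$. The key tool throughout is the evaluation map $\mathrm{ev}_{-i}\colon \mathrm{PU}(\mathbb{E}(i)) \to \mathrm{PGL}(3,\mathbb{E}(i))$ together with a first-order expansion of the unitarity relation near $t = -i$, the point at which $D$ degenerates.

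First I would evaluate $\overline{g} D g^T = D$ at $t = -i$, where $D(-i) = \mathrm{diag}(1,0,0)$ is rank one. The resulting identity forces $g(-i)$ to have first column $(a,0,0)^T$ with $|a| = 1$, so after projective scaling the image of $\mathrm{ev}_{-i}$ lies in the block-upper parabolic subgroup $P \subset \mathrm{PGL}(3,\mathbb{E}(i))$. Since $\mathcal{U}_{\mathbb{E}(i)}$ is by definition the preimage of the unipotent radical of $P$, which is normal in $P$, it is normal in $\mathrm{PU}(\mathbb{E}(i))$. A second routine argument, matching coefficients of powers of $t$ in the unitarity relation for a matrix with constant entries, identifies the stabilizer of $[I]$ in $\mathrm{PU}(\mathbb{E}(i))$ with $u_{\mathbb{E}(i)}[\mathrm{U}(2,\mathbb{E}(i))]$; and a constant matrix which is both unitary and unipotent must be the identity, so $\mathcal{U}_{\mathbb{E}(i)} \cap u_{\mathbb{E}(i)}[\mathrm{U}(2,\mathbb{E}(i))] = \{1\}$.

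The heart of the proof is the following key lemma: for every $g \in \mathrm{PU}(\mathbb{E}(i))$, the lower-right $2\times 2$ block $M$ of $g|_{t=-i}$ lies in $\mathrm{U}(2,\mathbb{E}(i))$. I would prove this by expanding $\overline{g}(t)D(t)g(t)^T = D(t)$ to first order in $\epsilon := t + i$, using $D(-i) = \mathrm{diag}(1,0,0)$ and $D'(-i) = \mathrm{diag}(0,2,2)$. Restricting the order-$\epsilon$ coefficient to the lower-right $2\times 2$ entries, the contributions involving $\mathrm{diag}(1,0,0)$ vanish because the first column of $g(-i)$ is $(a,0,0)^T$, and the surviving identity is exactly $MM^{\ast} = I$. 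Granted the lemma, for any $g \in \mathrm{PU}(\mathbb{E}(i))$ we set $u = u_{\mathbb{E}(i)}[M^{-1}]$ and observe $g u \in \mathcal{U}_{\mathbb{E}(i)}$, so $\mathrm{PU}(\mathbb{E}(i)) = \mathcal{U}_{\mathbb{E}(i)} \cdot u_{\mathbb{E}(i)}[\mathrm{U}(2,\mathbb{E}(i))]$; combined with normality and trivial intersection this yields the semidirect product.

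Simple transitivity is then immediate: transitivity of $\mathrm{PU}(\mathbb{E}(i))$ on vertices is the direct analog of Lemma 4.3 (proved identically with $\mathrm{U}(2,\mathbb{E}(i))$ in place of $\mathrm{O}(2,\mathbb{E})$ at the conjugation step), and this descends to $\mathcal{U}_{\mathbb{E}(i)}$ since the stabilizer lies entirely in the complementary factor; freeness follows from trivial intersection. For the generation statement, simple transitivity reduces the task to showing the listed generators move $[I]$ to every neighbor in the link $\mathrm{Lk}_{[I]}$, after which connectivity of the building finishes by induction on distance. The type-$1$ vertices of $\mathrm{Lk}_{[I]}$ correspond to points of $\mathbb{E}(i)\mathbf{P}^2$, whose $\mathrm{SU}(2,\mathbb{E}(i))$-orbits are the fixed point $[e_1]$, the line $\{v_1 = 0\} \cong \mathbb{E}(i)\mathbf{P}^1$, and the one-parameter family of spheres $\{|v_2|^2 + |v_3|^2 = r^2\}$ inside $\{v_1 = 1\}$; the unipotent generators $\mathbf{k}_{\mathbb{E}(i)}[0]$, $\mathbf{k}_{\mathbb{E}(i)}[\infty]$, and $\mathbf{k}_{\mathbb{E}(i)}[r]$ land in a representative of each orbit, and $\mathrm{SU}(2,\mathbb{E}(i))$-conjugation sweeps each orbit (type-$(-1)$ vertices are handled symmetrically via inverses). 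I expect the main obstacle to be the first-order expansion underlying the key lemma, where one must extract the Hermitian unitarity $MM^{\ast} = I$ from the degenerate form $D(-i)$; a secondary technical burden is the bookkeeping check that the unipotent-generator lattices correspond correctly to the link orbit representatives, particularly at the boundary cases $r \in \{0,\infty\}$.
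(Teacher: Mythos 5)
Your proposal is correct and reaches the theorem by essentially the same route as the paper --- evaluation at $t=-i$, where $D$ degenerates to $\mathrm{diag}\left(1,\,0,\,0\right)$, identification of the evaluated image with a subgroup of the affine group, and the standard fact that a group acting simply transitively on the vertices of a connected complex is generated by the elements carrying the base vertex into its link --- but your logical ordering differs in a way worth noting. The paper proves transitivity of $\mathcal{U}_{\mathbb{E}\left(i\right)}$ bottom-up, by checking (reusing Lemma 4.3) that the unipotent generators and their $\mathrm{SU}\left(2,\,\mathbb{E}\left(i\right)\right)$-conjugates already realize every type 1 vertex of $\mathrm{Lk}_{\left[I\right]}$, and only afterwards assembles the semidirect product; you instead establish the decomposition $\mathrm{PU}\left(\mathbb{E}\left(i\right)\right)=\mathcal{U}_{\mathbb{E}\left(i\right)}\cdot u_{\mathbb{E}\left(i\right)}\left[\mathrm{U}\left(2,\,\mathbb{E}\left(i\right)\right)\right]$ first and let simple transitivity descend from $\mathrm{PU}$ to $\mathcal{U}$. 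Your version buys something the paper leaves implicit: the first-order expansion of $\overline{g}Dg^{T}=D$ in $\epsilon=t+i$, using $D\left(-i\right)=\mathrm{diag}\left(1,\,0,\,0\right)$ and $D'\left(-i\right)=\mathrm{diag}\left(0,\,2,\,2\right)$, correctly yields $MM^{\ast}=I$ for the lower-right block, which is exactly the fact the paper asserts without proof when it claims that elements of $\mathcal{U}_{\mathbb{E}\left(i\right)}$ evaluate with zero $\left(2,\,3\right)$- and $\left(3,\,2\right)$-entries. One imprecision to repair: $\mathcal{U}_{\mathbb{E}\left(i\right)}$ is \emph{not} by definition the preimage of the unipotent radical of the parabolic $P$ --- Definition 4.4 takes the preimage of the full upper-triangular unipotent group, which is not normal in $P$ --- so normality cannot be invoked ``by definition'' but must be routed through your key lemma (a block that is both unitary and unipotent upper-triangular is the identity, so the two preimages coincide inside $\mathrm{PU}\left(\mathbb{E}\left(i\right)\right)$); since you prove that lemma anyway, this is an ordering slip rather than a gap.
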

\begin{proof}

When $r\ge0$ or $r=\infty$, the unipotent generators satisfy
\begin{align*}
\mathbf{k}_{\mathbb{E}\left(i\right)}\left[r\right]|_{t=-i}\, & =\,\left[\begin{array}{ccc}
1 & \frac{r}{r^{2}-i} & 0\\
0 & 1 & 0\\
0 & 0 & 1
\end{array}\right],\,\mathbf{k}_{\mathbb{E}\left(i\right)}\left[\infty\right]|_{t=-i}\,=\,\left[\begin{array}{ccc}
1 & 0 & 0\\
0 & 1 & 0\\
0 & 0 & 1
\end{array}\right].
\end{align*}

Thus, the unipotent kernel $\mathcal{U}_{\mathbb{E}\left(i\right)}$
includes $\mathbf{k}_{\mathbb{E}\left(i\right)}\left[r\right]$ and
its every conjugate by unitary generators $u_{\mathbb{E}}\left[A\right]$,
where $A\in\mathrm{SU}\left(2,\,\mathbb{E}\left(i\right)\right)$.
As demonstrated in the proof of Lemma 4.3, these elements represent
the type 1 lattices in $\mathrm{Lk}_{\left[I\right]}$. Consequently,
$\mathcal{U}_{\mathbb{E}\left(i\right)}$ acts transitively on the
vertices of $X_{\mathbb{E}\left(i\right)\left(t\right)}$. Furthermore,
the stabilizer of the identity lattice $\left[I\right]$ is trivial,
as the identity matrix is the only unipotent element in $\mathrm{U}\left(2,\,\mathbb{E}\left(i\right)\right)$.

Since $\mathcal{U}_{\mathbb{E}\left(i\right)}$ acts simply transitively
on the vertices of $X_{\mathbb{E}\left(i\right)\left(t\right)}$,
the unipotent generators $\mathbf{k}_{\mathbb{E}\left(i\right)}\left[r\right]$
and their congugates generate the entire unipotent kernel $\mathcal{U}_{\mathbb{E}\left(i\right)}$.
The evaluated image of $\mathrm{PU}\left(\mathbb{E}\left(i\right)\right)$
at $t=-i$ forms a subgroup of the affine group, where the first column
normalizes to $\left[\begin{array}{ccc}
1 & 0 & 0\end{array}\right]^{T}$ by setting the $\left(1,\,1\right)$-entry to 1. At $t=-i$, the
evaluated images of elements of $\mathcal{U}_{\mathbb{E}\left(i\right)}$
have zeros in the $\left(2,\,3\right)$- and $\left(3,\,2\right)$-entries,
indicating that $\mathcal{U}_{\mathbb{E}\left(i\right)}$ is normal
in $\mathrm{PU}\left(\mathbb{E}\left(i\right)\right)$.

Moreover, since $\mathcal{U}_{\mathbb{E}\left(i\right)}$ and $u_{\mathbb{E}\left(i\right)}\left[\mathrm{U}\left(2,\,\mathbb{E}\left(i\right)\right)\right]$
have trivial intersection and together generate $\mathrm{PU}\left(\mathbb{E}\left(i\right)\right)$,
the entire group $\mathrm{PU}\left(\mathbb{E}\left(i\right)\right)$
is the semidirect product of these two subgroups. $\qedhere$

\noindent \end{proof}

Theorem 4.5 means the unipotent kernel $\mathcal{U}_{\mathbb{E}\left(i\right)}$
acts simply transitively on the vertices of $X_{\mathbb{E}\left(i\right)\left(t\right)}$.
Moreover, the action of $\mathcal{U}_{\mathbb{E}\left(i\right)}$
is type-rotating, by observing the determinants of the generators.
This type of groups, or $\widetilde{A_{2}}$ groups, has been studied
extensively since the classical works by Cartwright, Mantero, Steger
and Zappa in \citep{MR1232965,MR1232966}. According to \citep[Theorem 3.1]{MR1232965},
the structure of the group $\mathcal{U}_{\mathbb{E}\left(i\right)}$
is determined by the geometry of the projective plane $\mathbb{E}\left(i\right)\mathbf{P}^{2}$,
through the triangle presentation. Concretely, a practical triangle
presentation for $\mathcal{U}_{\mathbb{E}\left(i\right)}$ is constructed
by rewriting equations (18), (19) and (20) for unipotent generators
and taking conjugations by unitary generators. The palindromic polynomial
$t^{-1}+t$ in the diagonal of the matrix $D$ plays a crucial role
in determining the triangle presentation. Modifying it to $t^{-1}+1+t$,
$t^{-1}-1+t$, etc., results in unipotent kernels that exhibit different
types of triangle presentations, while acting on the same building.

\noindent \begin{prooftheorem14}

\noindent All the arguments presented so far apply to general Euclidean
fields. Therefore, it suffices to show that for any given infinite
cardinality $\kappa$, there exists a Euclidean field $\mathbb{E}$
of cardinality $\kappa$. Since the theory of ordered fields is (countably)
first-order, the theory of Euclidean fields is also first-order. There
are at least two models of Euclidean fields, the field of real constructible
numbers, and the field of real numbers $\mathbb{R}$. By applying
the upward Loewenheim\textendash Skolem theorem \citep{MR1924282},
we conclude the proof. $\qed$

\noindent \end{prooftheorem14}

Finally, we now turn our attention again to the Burau representation
of $B_{4}$. Recall that we defined two matrices $S'$ and $T'$,
given by (10) as
\begin{align*}
S' & =\left(\begin{array}{ccc}
1 & 0 & 0\\
0 & -t & 0\\
0 & 0 & -t^{-1}
\end{array}\right),\;T':=\left(\begin{array}{ccc}
-t^{-1}+1-t & 0 & t\left(1-t\right)\\
\left(t^{-1}-1\right)\left(t^{-1}+t\right) & 0 & -1+t-t^{2}\\
0 & -t^{-1} & 0
\end{array}\right).
\end{align*}

Birman's theorem guarantees that the representation $\beta_{4,\,r}$
is faithful if the image of $\left\langle S',\,T'\right\rangle $
generates the free group of rank 2. Evaluating at $t=-i$, we have
\begin{align*}
S' & |_{t=-i}=\left(\begin{array}{ccc}
1 & 0 & 0\\
0 & i & 0\\
0 & 0 & -i
\end{array}\right),\;T'|_{t=-i}:=\left(\begin{array}{ccc}
1 & 0 & 1-i\\
0 & 0 & -i\\
0 & -i & 0
\end{array}\right).
\end{align*}

Regarding these as elements of the affine group, an index 8 normal
subgroup $\mathcal{G}$ of $\left\langle S',\,T'\right\rangle $ is
contained in the unipotent kernel. By using the the Reidemeister-Schreier
process, we list the 9 generators of $\mathcal{G}$ as follows:
\begin{align*}
a_{1} & :=S'^{2}T'^{-2},\,a_{2}:=S'T'^{3}S'^{-1}T'^{-1},\,a_{3}:=S'T'S'T'^{-1},\\
a_{4} & :=T'^{2}S'T'S'^{-1}T'^{-1},\,a_{5}:=T'^{4},\,a_{6}:=T'^{3}S'T'^{-1}S'^{-1},\\
a_{7} & :=T'S'T'^{-1}S',\,a_{8}:=T'S'T'S'^{-1},\,a_{9}:=T'S'^{2}T'^{-3}.
\end{align*}

The choice of the base field $\mathbb{E}\left(i\right)$ is not important
now; we can simply take it as $\mathbb{C}$. These elements are generated
by a finite set of generators of $\mathcal{U}_{\mathbb{C}}$. We label
a set of generators as follows:
\begin{align*}
d_{1} & :=\mathbf{k}_{\mathbb{C}}\left[\infty\right],\,d_{2}:=u_{\mathbb{C}}\left[\begin{array}{cc}
0 & 1\\
-1 & 0
\end{array}\right]\mathbf{k}_{\mathbb{C}}\left[\infty\right]u_{\mathbb{C}}\left[\begin{array}{cc}
0 & -1\\
1 & 0
\end{array}\right],\\
g_{1} & :=\mathbf{k}_{\mathbb{C}}\left[1\right],\,g_{2}:=u_{\mathbb{C}}\left[\begin{array}{cc}
0 & 1\\
-1 & 0
\end{array}\right]\mathbf{k}_{\mathbb{C}}\left[1\right]u_{\mathbb{C}}\left[\begin{array}{cc}
0 & -1\\
1 & 0
\end{array}\right],\\
g_{3} & :=u_{\mathbb{C}}\left[\begin{array}{cc}
i & 0\\
0 & -i
\end{array}\right]\mathbf{k}_{\mathbb{C}}\left[1\right]u_{\mathbb{C}}\left[\begin{array}{cc}
-i & 0\\
0 & i
\end{array}\right],\,g_{4}:=u_{\mathbb{C}}\left[\begin{array}{cc}
0 & i\\
i & 0
\end{array}\right]\mathbf{k}_{\mathbb{C}}\left[1\right]u_{\mathbb{C}}\left[\begin{array}{cc}
0 & -i\\
-i & 0
\end{array}\right].
\end{align*}

Then, the subgroup $\mathcal{H}:=\left\langle d_{1},\,d_{2},\,g_{1},\,g_{2},\,g_{3},\,g_{4}\right\rangle $
of $\mathcal{U}_{\mathbb{C}}$ contains $\mathcal{G}$. Concretely,
we have
\begin{align}
\begin{cases}
 & a_{1}=d_{1}^{3}d_{2}^{-3}g_{4}^{2}g_{1}^{-2},\,a_{2}=d_{1}d_{2}^{-1}g_{3}^{2}d_{1}g_{2}^{2}d_{2}^{-1}g_{3}^{-2}d_{1}d_{2}^{-1}g_{1}^{-2},\,a_{3}=d_{1}g_{3}^{2}d_{1}^{-1}g_{1}^{-2},\\
 & a_{4}=g_{1}^{2}g_{4}^{-2}d_{2}^{-3}g_{3}^{-2}d_{1}g_{1}^{-2},\,a_{5}=g_{1}^{2}d_{1}^{-1}g_{4}^{-2}d_{2}^{-1}g_{1}^{-2}d_{1}g_{4}^{2}d_{2},\\
 & a_{6}=g_{1}^{2}d_{1}^{-1}g_{4}^{-2}d_{2}g_{1}^{-2}d_{1}^{-1}g_{3}^{-2}d_{1}^{-1},\,a_{7}=d_{2}^{2}g_{1}^{2}d_{1}^{-1}g_{3}^{2}d_{1},\\
 & a_{8}=d_{2}g_{1}^{2}d_{1}^{-3}g_{2}^{-2}d_{2}^{2},\,a_{9}=d_{2}^{3}g_{1}^{2}d_{1}^{-2}g_{1}^{2}d_{1}g_{4}^{2}g_{1}^{-2}.
\end{cases}
\end{align}
\begin{definition}

For a subgroup $G$ of $\mathcal{U}_{\mathbb{E}\left(i\right)}$,
define the \emph{spanning subcomplex} $\Delta_{\mathbb{E}\left(i\right)}\left(G\right)$
to be the simplicial subcomplex of $X_{\mathbb{E}\left(i\right)\left(t\right)}$
such that (1) the set of the vertices of $\Delta_{\mathbb{E}\left(i\right)}\left(G\right)$
is $\left[G\right]$, (2) the complex has an edge $\left(v_{1},\,v_{2}\right)$
if and only if $v_{1},\,v_{2}\in\left[G\right]$, and (3) the complex
has a 2-cell if and only if the complex has the three sides of the
2-cell.

\noindent \end{definition}

We establish that the spanning subcomplex $\Delta_{\mathbb{E}\left(i\right)}\left(\mathcal{H}\right)$
is \emph{locally finite}. In other words, the link of a vertex in
$\Delta_{\mathbb{E}\left(i\right)}\left(\mathcal{H}\right)$ contains
only finitely many vertices. Notably, the proof of this finiteness
relies on a complete (infinite) classification of the generators of
$\mathcal{U}_{\mathbb{E}\left(i\right)}$.

\noindent \begin{lemma}

The spanning subcomplex $\Delta_{\mathbb{E}\left(i\right)}\left(\mathcal{H}\right)$
contains exactly 11 vertices of type 1 in $\mathrm{Lk}_{\left[I\right]}$,
as follows.
\begin{align*}
 & \left[d_{1}\right],\,\left[d_{2}\right],\,\left[d_{1}^{-1}d_{2}^{-1}\right],\,\left[g_{1}\right],\,\left[g_{2}\right],\,\left[g_{3}\right],\,\left[g_{4}\right],\\
 & \left[d_{2}^{-1}g_{1}^{-1}\right],\,\left[d_{2}^{-1}g_{3}^{-1}\right],\,\left[d_{1}^{-1}g_{2}^{-1}\right],\,\left[d_{1}^{-1}g_{4}^{-1}\right].
\end{align*}
\end{lemma}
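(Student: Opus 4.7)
The plan is to leverage the simply-transitive action of $\mathcal{U}_{\mathbb{C}}$ on the vertices of $X_{\mathbb{C}(t)}$ (Theorem 4.5), under which the type-$1$ neighbors of $[I]$ correspond bijectively to the positive unipotent generators of $\mathcal{U}_{\mathbb{C}}$, namely the elements of the form $u_{\mathbb{C}}[w]\,\mathbf{k}_{\mathbb{C}}[r]\,u_{\mathbb{C}}[w]^{-1}$ with $w \in \mathrm{SU}(2,\mathbb{C})$ and $r \in \mathbb{R}_{\geq 0} \cup \{\infty\}$. The task thus reduces to enumerating which positive unipotent generators lie in the subgroup $\mathcal{H}$. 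The six elements $d_1,d_2,g_1,\dots,g_4$ are positive generators by construction, contributing six type-$1$ vertices. For each of the five listed products $h \in \{d_1^{-1}d_2^{-1},\,d_2^{-1}g_1^{-1},\,d_2^{-1}g_3^{-1},\,d_1^{-1}g_2^{-1},\,d_1^{-1}g_4^{-1}\}$, direct matrix multiplication in $\mathrm{PGL}(3,\mathbb{C}(t))$ exhibits $h$ in the standard unipotent-generator form by producing explicit parameters $w,r$; equivalently, each of $d_2 d_1,\,g_1 d_2,\,g_3 d_2,\,g_2 d_1,\,g_4 d_1$ is the inverse of a positive unipotent generator — a triangle relation in $\mathcal{U}_{\mathbb{C}}$ — and inverting yields the five listed positive generators.

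For the converse, suppose $h \in \mathcal{H}$ is a positive generator of $\mathcal{U}_{\mathbb{C}}$ and write $h$ as a reduced word in $\{d_i^{\pm},g_j^{\pm}\}$. The type-rotating property implies that every positive unipotent generator shifts vertex type by a fixed amount mod $3$ (one checks that $v_{\infty}(\det \mathbf{k}_{\mathbb{C}}[r]) = -1$ for all $r$), so a reduced word of signed length $s$ (positive minus negative letters) represents a vertex of type $\equiv s \pmod 3$. Since type $1$ forces $s \equiv 1 \pmod 3$ and $s$ shares parity with the word length, the shortest candidates are length $1$ with $s = 1$ (the six generators $d_i,g_j$) and length $2$ with $s = -2$ (pairs $h_i^{-1}h_j^{-1}$, giving thirty ordered possibilities). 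Matrix computation confirms exactly five of the thirty produce positive generators, matching the five listed; the remaining twenty-five give vertices at combinatorial distance $2$ from $[I]$.

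The main obstacle is ruling out reduced words of length $\geq 3$ with $s \equiv 1 \pmod 3$ that collapse to a positive generator of $\mathcal{U}_{\mathbb{C}}$. For this I appeal to the triangle presentation of $\mathcal{U}_{\mathbb{C}}$ as a type-rotating $\widetilde{A_2}$-group: the combinatorial distance from $[I]$ to $[h]$ equals the word length of $h$ in the full positive generating set of $\mathcal{U}_{\mathbb{C}}$, and this distance is $1$ exactly when $[h]$ is adjacent to $[I]$. A length-$\geq 3$ reduced expression in $\mathcal{H}$'s generators equal to a positive generator must therefore collapse via triangle relations down to length $1$. Enumerating the triangle relations of $\mathcal{U}_{\mathbb{C}}$ supported on the six points of $\mathbb{C}\mathbf{P}^2$ corresponding to $d_1,\dots,g_4$ — determined by the lines joining them — produces only the five collapses already identified. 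Hence no further type-$1$ vertices of $\mathrm{Lk}_{[I]}$ arise from $\mathcal{H}$, and the count is exactly $11$.
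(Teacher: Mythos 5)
Your reduction is the same as the paper's: by simple transitivity of $\mathcal{U}_{\mathbb{C}}$ on vertices, a type-$1$ vertex of $\mathrm{Lk}_{[I]}$ lying in $\Delta_{\mathbb{C}}\left(\mathcal{H}\right)$ corresponds to a unique element $u_{\mathbb{C}}\left[A\right]\mathbf{k}_{\mathbb{C}}\left[r\right]u_{\mathbb{C}}\left[A\right]^{-1}$ that must belong to $\mathcal{H}$, and verifying that the eleven listed elements are of this form is routine. The gap is in the exclusion step. Knowing that $\left[h\right]$ is at combinatorial distance $1$ from $\left[I\right]$ (equivalently, that $h$ has length $1$ in the \emph{full} positive generating set of $\mathcal{U}_{\mathbb{C}}$) puts no bound whatsoever on the length of $h$ as a reduced word in the six generators of $\mathcal{H}$ --- your own five length-two examples already show the two lengths disagree. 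So you cannot restrict attention to words of length $\le 2$; you must rule out that some long reduced word in $d_{1},d_{2},g_{1},\dots,g_{4}$ equals a positive generator $u_{\mathbb{C}}\left[A\right]\mathbf{k}_{\mathbb{C}}\left[r\right]u_{\mathbb{C}}\left[A\right]^{-1}$ for parameters $\left(A,r\right)$ having nothing to do with the six chosen points. Your proposed fix --- that any such collapse must proceed through triangle relations ``supported on the six points'' --- is unjustified: a chain of triangle relations reducing a long word to length one will in general pass through generators outside $\mathcal{H}$'s generating set, and you give no argument excluding this. As written, the hard direction is not proved.

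The paper closes exactly this gap with an invariant that controls \emph{all} of $\mathcal{H}$ at once rather than word by word. Since every element of $\mathcal{U}_{\mathbb{C}}$ evaluated at $t=-i$ is upper unitriangular with vanishing $\left(2,3\right)$-entry, the map $\phi$ reading off the $\left(1,2\right)$- and $\left(1,3\right)$-entries at $t=-i$ is a group homomorphism $\mathcal{U}_{\mathbb{C}}\to\mathbb{C}^{2}$; the images of the six generators generate a discrete lattice, so $\phi\left(\mathcal{H}\right)$ is contained in $\left(\mathbb{Z}\left[i\right]+\mathbb{Z}\tfrac{1+i}{2}\right)^{2}$. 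Comparing this with the closed formula for $\phi\left(u_{\mathbb{C}}\left[A\right]\mathbf{k}_{\mathbb{C}}\left[r\right]u_{\mathbb{C}}\left[A\right]^{-1}\right)$ and applying the norm map forces $r=0$ or $r=1$ and then pins down $A$ up to the identifications giving $g_{1},\dots,g_{4}$ and the five products; a separate evaluation at $t=i$, where every element of $\mathcal{H}$ has diagonal lower-right $2\times2$ block, disposes of the $r=\infty$ case. To repair your argument you would need to replace the word-length and triangle-relation combinatorics by an invariant of this kind (or by an honest analysis of the full, infinite triangle presentation), since the finite subgraph of $\mathbb{C}\mathbf{P}^{2}$ spanned by the six points does not control products in $\mathcal{H}$.
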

\begin{proof}

It is evident that the given 11 vertices are of type 1 and belong
to $\mathrm{Lk}_{\left[I\right]}$. To complete the argument, it remains
to show that there are no additional type 1 vertices.

First, for a special unitary matrix $A\in\mathrm{SU}\left(2,\,\mathbb{E}\left(i\right)\right)$
of the form
\begin{align*}
A & =\left(\begin{array}{cc}
a_{1} & a_{2}\\
-\overline{a_{2}} & \overline{a_{1}}
\end{array}\right),
\end{align*}
consider the matrix $u_{\mathbb{E}\left(i\right)}\left[A\right]\mathbf{k}_{\mathbb{E}\left(i\right)}\left[\infty\right]u_{\mathbb{E}\left(i\right)}\left[A\right]^{-1}$.
When evaluated at $t=i$, the $\left(2,\,3\right)$-entry is $2a_{1}a_{2}$,
and the $\left(3,\,2\right)$-entry is $2\overline{a_{1}a_{2}}$.
Since the generators of $\mathcal{H}$ have 0 in these positions when
evaluated at $t=i$, the only possibilities are $a_{1}=0$ or $a_{2}=0$,
corresponding to the generators $d_{2}$ or $d_{1}$, respectively.

Next, consider the case $u_{\mathbb{E}\left(i\right)}\left[A\right]\mathbf{k}_{\mathbb{E}\left(i\right)}\left[r\right]u_{\mathbb{E}\left(i\right)}\left[A\right]^{-1}$
for a nonnegative element $r\in\mathbb{E}$ and a matrix $A\in\mathrm{SU}\left(2,\,\mathbb{E}\left(i\right)\right)$.
Define the map $\phi:\mathcal{U}_{\mathbb{E}\left(i\right)}\to\mathbb{E}\left(i\right)^{2}$
as the composition of the evaluation at $t=-i$ with a map defined
by
\begin{align*}
\left[\begin{array}{ccc}
1 & b_{12} & b_{13}\\
0 & 1 & 0\\
0 & 0 & 1
\end{array}\right] & \mapsto\left(b_{12},\,b_{13}\right).
\end{align*}

In general, we have
\begin{align}
\phi\left(u_{\mathbb{E}}\left[\begin{array}{cc}
a_{1} & a_{2}\\
-\overline{a_{2}} & \overline{a_{1}}
\end{array}\right]\mathbf{k}_{\mathbb{E}}\left[r\right]u_{\mathbb{E}}\left[\begin{array}{cc}
a_{1} & a_{2}\\
-\overline{a_{2}} & \overline{a_{1}}
\end{array}\right]^{-1}\right) & =\left(\frac{\overline{a_{1}}r\left(r^{2}+i\right)}{r^{4}+1},\,\frac{-a_{2}r\left(r^{2}+i\right)}{r^{4}+1}\right),
\end{align}

\noindent where $a_{1}\overline{a_{1}}+a_{2}\overline{a_{2}}=1$.
The two generators $d_{1}$ and $d_{2}$ have the trivial image $\left(0,\,0\right)$
under $\phi$. The images of the others are given by
\begin{align*}
\phi\left(g_{1}\right) & =\left(\frac{1}{2}+\frac{i}{2},\,0\right),\,\phi\left(g_{2}\right)=\left(0,\,-\frac{1}{2}-\frac{i}{2}\right),\\
\phi\left(g_{3}\right) & =\left(\frac{1}{2}-\frac{i}{2},\,0\right),\,\phi\left(g_{4}\right)=\left(0,\,\frac{1}{2}-\frac{i}{2}\right).
\end{align*}

It suffices to determine which numbers $a_{1},\,a_{2},\,r$ make the
right-hand side of (22) belong to $\left(\mathbb{Z}\left[i\right]+\left(\frac{1+i}{2}\right)\right)^{2}$.
By considering the \emph{norm map}
\begin{align*}
\left(a,\,b\right) & \mapsto a\overline{a}+b\overline{b}\in\mathbb{Z}+\frac{1}{2},
\end{align*}
we obtain
\begin{align*}
\frac{r^{2}}{r^{4}+1}=0\;\;\mathrm{or}\;\;\frac{1}{2} & \le\frac{r^{2}}{r^{4}+1},
\end{align*}
where the first condition implies $r=0$, which corresponds to $\left[d_{1}^{-1}d_{2}^{-1}\right]$,
and the second condition implies $r^{2}=1$. Assuming the latter,
we conclude $r=1$, as $r$ is assumed to be nonnegative. Substituting
$r=1$, the right-hand side of (22) simplifies to
\begin{align*}
\left(\frac{\overline{a_{1}}\left(1+i\right)}{2},\,\frac{-a_{2}\left(1+i\right)}{2}\right).
\end{align*}

Computing the norms $\frac{a_{1}\overline{a_{1}}}{2}$ and $\frac{a_{2}\overline{a_{2}}}{2}$
of the two entries, which are either 0 or not smaller than $\frac{1}{2}$,
we observe that $a_{1}$ and $a_{2}$ are either 0 or have norm 1.
Using the assumption $a_{1}\overline{a_{1}}+a_{2}\overline{a_{2}}=1$,
the only possible cases are $a_{j}=\pm1,\,\pm i$ and $a_{3-j}=0$
for $j=1,\,2$. $\qedhere$

\noindent \end{proof}
\begin{prooftheorem15}

\noindent The goal of the proof is to prove that the group $\mathcal{G}$
is free of rank 9 as a subgroup of $\mathcal{H}$. Consider the spanning
subcomplex $\Delta_{\mathbb{C}}\left(\mathcal{H}\right)$. Suppose
this subcomplex is simply connected, which is the premise of Theorem
1.5. Then, the relations of the group $\mathcal{H}$ are determined
in the link of the identity lattice, $\mathrm{Lk}_{\left[I\right]}$.
By Lemma 4.7, there are exactly 11 type 1 vertices in the link, meaning
only finitely many relations are possible. Using the relations (18),
(19) and (20), or by computing them one by one, we find that there
are 5 relations after continually using the Tietze transformation.
Then, the group structure of $\mathcal{H}$ is described as
\begin{align*}
\left(\left\langle d_{1},\,g_{1},\,g_{3}\right\rangle \times\left\langle d_{2}\right\rangle \right)*_{\left\langle d_{1},\,d_{2}\right\rangle }\left(\left\langle d_{2},\,g_{2},\,g_{4}\right\rangle \times\left\langle d_{1}\right\rangle \right),
\end{align*}
where both $\left\langle d_{1},\,g_{1},\,g_{3}\right\rangle $ and
$\left\langle d_{2},\,g_{2},\,g_{4}\right\rangle $ are free of rank
3. The symbol $*_{\left\langle d_{1},\,d_{2}\right\rangle }$ means
the group is a free product amalgamated at the group $\left\langle d_{1}\right\rangle \times\left\langle d_{2}\right\rangle \cong\mathbb{Z}^{2}$.
Therefore, the group $\mathcal{H}$ is an example of a right-angled
Artin group whose defining graph is a tree, which is a well-understood
class of groups (see \citep{MR2322545} for a general introduction).

There are many possible ways to prove the freeness of $\mathcal{G}$,
but for our context, we define a quotient map $\mathcal{\mathcal{F}}:\mathcal{H}\to\mathbb{Z}$,
adding relations $d_{1}=g_{1}^{-2}=d_{2}$, $g_{1}=g_{2}=g_{3}=g_{4}$.
It can then be shown inductively that $\ker\mathcal{F}$ is free of
rank 9 generated by
\begin{align*}
l_{1} & :=d_{2}g_{1}^{2},\,l_{2}:=d_{1}g_{1}^{2},\,l_{3}:=g_{1}d_{1}g_{1},\\
l_{4} & :=g_{2}^{-1}g_{1},\,l_{5}:=g_{1}g_{2}^{-1},\,l_{6}:=g_{3}^{-1}g_{1},\\
l_{7} & :=g_{1}g_{3}^{-1},\,l_{8}:=g_{4}^{-1}g_{1},\,l_{9}:=g_{1}g_{4}^{-1}.
\end{align*}

Using (21), we verify $\mathcal{F}\left(a_{j}\right)$ is trivial
for each $j$ such that $1\le j\le9$. By employing the free generators
$l_{j}$, we have
\begin{align*}
\begin{cases}
 & a_{1}=\left(l_{2}l_{1}^{-1}\right)^{3}l_{9}l_{3}l_{8}^{-1}l_{3}^{-1},\,a_{2}=l_{2}l_{1}^{-1}l_{7}^{-1}l_{1}l_{6}^{-1}l_{2}l_{1}^{-1}l_{5}^{-1}l_{3}l_{4}^{-1}l_{1}l_{2}l_{6}l_{1}^{-1}l_{7}l_{2}l_{1}^{-2},\\
 & a_{3}=l_{2}l_{1}^{-1}l_{7}^{-1}l_{1}l_{6}^{-1}l_{1}l_{2}^{-1}l_{1}^{-1},\,a_{4}=l_{1}l_{2}l_{1}^{-1}l_{8}l_{3}^{-1}l_{9}l_{6}l_{1}^{-1}l_{7}l_{2}l_{1}^{-2},\\
 & a_{5}=l_{1}l_{2}l_{1}^{-1}l_{2}l_{1}^{-1}l_{8}l_{3}^{-1}l_{1}l_{3}^{-2}l_{1}^{-1}l_{9}l_{1}l_{2}^{-1}l_{1}^{-1}l_{2}l_{1}^{-1}l_{9}^{-1}l_{3}l_{8}^{-1}l_{1}l_{2},\\
 & a_{6}=l_{1}l_{2}l_{1}^{-1}l_{8}l_{3}^{-1}l_{1}l_{3}^{-2}l_{1}^{-1}l_{9}l_{1}l_{2}^{-2}l_{6}l_{1}^{-1}l_{7}l_{1}l_{2}^{-1},\,a_{7}=l_{1}^{2}l_{2}^{-1}l_{7}^{-1}l_{1}l_{6},\\
 & a_{8}=l_{1}l_{4}l_{3}^{-1}l_{5}l_{1}l_{2}l_{1}l_{2}^{-1},\,a_{9}=l_{1}^{2}l_{2}^{-1}l_{1}l_{2}^{-1}l_{1}l_{2}l_{1}^{-1}l_{9}^{-1}l_{3}l_{8}^{-1}l_{1}l_{2}^{-1}l_{1}^{-1}.
\end{cases}
\end{align*}

Since $\mathcal{G}$ is a finitely generated subgroup of a finitely
generated free group, determining its free rank is a combinatorial
problem. This can be solved algorithmically using Stallings' folding
\citep{MR695906,MR1882114}. Applying this algorithm yields the desired
result that the rank is 9. $\qed$

\noindent \end{prooftheorem15}
\begin{remark}

The map $\mathcal{F}:\mathcal{H}\to\mathbb{Z}$, defined in the proof
of Theorem 1.5, can be extended to the entire unipotent kernel $\mathcal{U}_{\mathbb{C}}$,
without assuming the simple connectivity of $\Delta_{\mathbb{C}}\left(\mathcal{H}\right)$.
This extension is achieved by defining the quotient map $\widetilde{\mathcal{F}}:\mathcal{U}_{\mathbb{C}}\to\mathcal{U}_{\mathbb{C}}/\ker\widetilde{\mathcal{F}}$
by equating a generator with its conjugates by all unitary generators.
The image of $\widetilde{\mathcal{F}}$ forms an abelian group. Moreover,
the quotient of $\mathcal{U}_{\mathbb{C}}/\ker\widetilde{\mathcal{F}}$
by $\widetilde{\mathcal{F}}\left(\left\langle \mathbf{k}_{\mathbb{C}}\left[\infty\right]\right\rangle \right)$
forms a divisible group $\mathbf{D}$, in which the image of $\left\langle \mathbf{k}_{\mathbb{C}}\left[1\right]\right\rangle $
constitutes a torsion subgroup of order 2. Elements of order $2^{n}$
are not the only torsion elements in $\mathbf{D}$; for instance,
the image of $\mathbf{k}_{\mathbb{C}}\left[3^{\frac{1}{4}}\right]$
is a torsion element of order 3 in $\mathbf{D}$. The map $\widetilde{\mathcal{F}}$
may play a role for future research directions.

\noindent \end{remark}
\begin{remark}

We make a brief remark on the computational aspects. The link of the
identity lattice within our subcomplex $\Delta_{\mathbb{C}}\left(\mathcal{H}\right)$
is not a spherical building, which implies that $\Delta_{\mathbb{C}}\left(\mathcal{H}\right)$
is not a subbuilding of $X_{\mathbb{C}\left(t\right)}$ (or $X_{\mathbb{Q}\left(i\right)\left(t\right)}$).
While one might consider the minimal subbuilding $\Delta'$ containing
$\Delta_{\mathbb{C}}\left(\mathcal{H}\right)$, computations in $\Delta'$
are generally quite challenging. This difficulty appears to stem from
the properties of the field of real constructible numbers, or at least
from the properties of Euclidean fields. Notably, a recent result
\citep{MR4772285} has established that the field of real constructible
numbers is undecidable.

\noindent \end{remark}

\begin{spacing}{0.9}
\bibliographystyle{amsplain}
\phantomsection\addcontentsline{toc}{section}{\refname}\bibliography{bibgen}

\end{spacing}

$ $

{\small{}Donsung Lee; \href{mailto:disturin@snu.ac.kr}{disturin@snu.ac.kr}}{\small\par}

{\small{}Department of Mathematical Sciences and Research Institute
of Mathematics,}{\small\par}

{\small{}Seoul National University, Gwanak-ro 1, Gwankak-gu, Seoul,
South Korea 08826}{\small\par}

\clearpage{}

\pagebreak{}

\pagenumbering{arabic}

\renewcommand{\thefootnote}{A\arabic{footnote}}
\renewcommand{\thepage}{A\arabic{page}}
\renewcommand{\thetable}{A\arabic{table}}
\renewcommand{\thefigure}{A\arabic{figure}}

\setcounter{footnote}{0} 
\setcounter{section}{0}
\setcounter{table}{0}
\setcounter{figure}{0}
\end{document}